%
%
%
%
%
%
%
%
%

\documentclass[reqno]{amsart} 
\usepackage{enumerate,amsmath,amssymb} 
\usepackage{pstricks,pst-node,pst-coil,pst-plot} 
\usepackage{hyperref}


\theoremstyle{plain}

\newtheorem{thm}{Theorem}[section]
\newtheorem{theorem}[thm]{Theorem}

\newtheorem{lemma}[thm]{Lemma}

\newtheorem{proposition}[thm]{Proposition}

\theoremstyle{remark}

\newtheorem{remark}[thm]{Remark}

\theoremstyle{definition}

\newtheorem{definition}[thm]{Definition}

\newcounter{mnotecount}[section]

\newcommand{\definedas}{\mathrel{\raise.095ex\hbox{\rm :}\mkern-5.2mu=}}

\def\epsilon{{\varepsilon}}
\def\phi{{\varphi}}

\let\<\langle 
\let\>\rangle

\newcommand{\bR}{\mathbb{R}}

\newcommand{\bH}{\mathbb{H}}

\newcommand{\cN}{\mathcal{N}}

\renewcommand{\hbar}{\overline{h}}
\newcommand{\Gambar}{\overline{\Gamma}}
\newcommand{\Gamtil}{\widetilde{\Gamma}}

\newcommand{\Htil}{\widetilde{H}}
\newcommand{\Hbar}{\overline{H}}

\newcommand{\nablabar}{\overline{\nabla}}
\newcommand{\bbar}{\overline{b}}

\newcommand{\btil}{\widetilde{b}}



\DeclareMathOperator{\diag}{diag}

\DeclareMathOperator{\tr}{tr}
\DeclareMathOperator{\divg}{div}



\newcommand{\pdiff} [2]{\frac{\partial #1}{\partial #2}}


















\newcommand{\scal}{\mathrm{Scal}}



\DeclareMathOperator{\hess}{Hess}
\newcommand{\hessdd}[2]{\nabla^2_{#1, #2}}

\newcommand{\hessbardd}[2]{\overline{\nabla}^2_{#1, #2}}
\newcommand{\hesstildd}[2]{\widetilde{\nabla}^2_{#1, #2}}

\newcommand{\sff}{S}

\newcommand{\sffdd}[2]{\sff_{#1 #2}}

\newcommand{\sffr}{\widetilde{\sff}}
\newcommand{\sffrdd}[2]{\sffr_{#1 #2}}


\newcommand{\sffb}{\overline{\sff}}
\newcommand{\sffbdd}[2]{\sffb_{#1 #2}}



\def\epsilon{{\varepsilon}}

\def\phi{{\varphi}}

\let\<\langle 
\let\>\rangle



%

\begin{document} 


\title
{Penrose type inequalities for asymptotically hyperbolic graphs}
 
\author{Mattias Dahl}
\address{Institutionen f\"or Matematik \\
 Kungliga Tekniska H\"ogskolan \\
 100 44 Stockholm \\
 Sweden} \email{dahl@math.kth.se}

\author{Romain Gicquaud}
\address{Laboratoire de Math\'ematiques et de Physique Th\'eorique \\
 UFR Sciences et Technologie \\
 Facult\'e Fran\c cois Rabelais \\
 Parc de Grandmont \\
 37200 Tours \\
 France} \email{romain.gicquaud@lmpt.univ-tours.fr}

\author{Anna Sakovich}
\address{Institutionen f\"or Matematik \\
 Kungliga Tekniska H\"ogskolan \\
 100 44 Stockholm \\
 Sweden} \email{sakovich@math.kth.se}

\begin{abstract}
In this paper we study asymptotically hyperbolic manifolds given 
as graphs of asymptotically constant functions over hyperbolic space 
$\bH^n$. The graphs are considered as unbounded hypersurfaces of 
$\bH^{n+1}$ which carry the induced metric and have an interior 
boundary. For such manifolds the scalar curvature appears in the 
divergence of a 1-form involving the integrand for the asymptotically 
hyperbolic mass. Integrating this divergence we estimate the mass by 
an integral over the inner boundary. In case the inner boundary 
satisfies a convexity condition this can in turn be estimated in terms 
of the area of the inner boundary. The resulting estimates are similar 
to the conjectured Penrose inequality for asymptotically hyperbolic 
manifolds. The work presented here is inspired by Lam's article 
\cite{LamGraph} concerning the asymptotically Euclidean case.
Using ideas developed by Huang and Wu in \cite{HuangWu2} we can in 
certain cases prove that equality is only attained for the anti-de 
Sitter Schwarzschild metric.
\end{abstract}

\subjclass[2000]{53C21, (83C05, 83C30)}
%
%

\date{\today}

\keywords{Asymptotically hyperbolic manifold, 
Riemannian Penrose inequality} 

\maketitle

\tableofcontents

\section{Introduction}

In 1973, R. Penrose conjectured that the total mass of a space-time
containing black holes cannot be less than a certain function of the sum
of the areas of the event horizons. Black holes are objects whose
definition requires knowledge of the global space-time. Hence, given
Cauchy data (which are the only data needed to define the total mass
of a space-time), finding event horizons would require solving the
Einstein equations. As a consequence, in the current formulation of the
Penrose conjecture, event horizons are usually replaced by the weaker
notion of apparent horizons. We refer the reader to
\cite[Chapter XIII]{ChoquetBruhat} for further details.

The classical Penrose conjecture takes the following form: Let
$(M, g, k)$ be Cauchy data for the Einstein equations, that is a triple
where $(M, g)$ is a Riemannian 3-manifold and $k$ is a symmetric 2-tensor
on $M$. Assume that $(M, g, k)$ satisfies the dominant energy condition
\[
\mu \geq |J|,
\] 
where $\mu$ and $J$ are defined through
\[\left\lbrace
\begin{aligned}
 \mu &\definedas \frac{1}{2} \left(\scal^g - |k|^2_g + (\tr_g k)^2\right),\\
 J &\definedas \divg(k) - d(\tr_g k).
\end{aligned}
\right.\]
Assume further that $(M, g, k)$ is asymptotically Euclidean. A compact
oriented surface $\Sigma \subset M$ is called an apparent horizon if
$\Sigma$ satisfies \[H^g + \tr^\Sigma k = 0,\] where $H^g$ is the trace
of the second fundamental form of $\Sigma$ computed with respect to 
the outgoing normal $\nu$ of $\Sigma$, that is $\sff(X, Y) = 
\<\nabla_X \nu, Y\>$ for any vectors $X$ and $Y$ tangent to $\Sigma$,  
and $\tr^\Sigma k$ is the trace of $k$ restricted to the tangent space 
of $\Sigma$ for the metric induced by $g$. Hence viewing $(M, g, k)$ 
as immersed in a space-time, the expansion of $\Sigma$ in the future 
outgoing light-like direction vanishes. We assume that $\Sigma$ is 
outermost, that is $\Sigma$ contains all other apparent horizons in 
its interior. Note that $\Sigma$ may be disconnected. See 
\cite{AnderssonMetzger} for further details. Then the Penrose conjecture 
takes the form
\[
m \geq \sqrt{\frac{|\Sigma|}{16\pi}},
\] 
where $|\Sigma|$ denotes the area of $\Sigma$ and $m$ is the mass of the 
manifold $(M, g)$. Further, equality should hold only if the exterior of 
$\Sigma$ is isometric to a hypersurface in the exterior region of a 
Schwarzschild black hole with $k$ equal to the second fundamental form of 
this hypersurface.

This conjecture can be generalized to higher dimensional manifolds. All
the statements are the same except for the inequality which in $n$ 
dimensions reads
\[
m \geq
\frac{1}{2} \left(\frac{|\Sigma|}{\omega_{n-1}}\right)^{\frac{n-2}{n-1}},
\]
where $\omega_{n-1}$ is the volume of the unit $(n-1)$-sphere.

Two major breakthroughs in the proof of this inequality were obtained 
almost simultaneously by Huisken, Ilmanen \cite{HuiskenIlmanen} and 
Bray \cite{BrayPenrose} for 3-manifolds. They both deal with the 
time-symmetric case, i.e. when $k=0$. The result of Bray was extended to 
higher dimensions in \cite{BrayLee}. We refer the reader to the excellent 
reviews \cite{MarsPenrose} and \cite{BrayChrusciel} for further details. 
Recently, Lam \cite{LamGraph} gave a simple proof of the time-symmetric 
Penrose inequality for a manifold which is a graph of a smooth function 
over $\bR^n$. His proof was extended by Huang and Wu in \cite{HuangWu1} to
give a proof of the positive mass theorem (including the rigidity statement)
for asymptotically Euclidean manifolds which are submanifolds of $\bR^{n+1}$.
More general ambient spaces were considered by de Lima and Gir\~ao in
\cite{LimaGirao1}.

The Penrose conjecture can be generalized to space-times with negative 
cosmological constant. Up to rescaling, we can assume that the cosmological 
constant $\Lambda$ equals $-\frac{n(n-1)}{2}$. Restricting ourselves to the 
time-symmetric case, the dominant energy condition then reads 
\[
\scal^g \geq -n(n-1).
\] 
The lower bound for the mass (defined in Section \ref{secAH}) is then 
conjectured to be given by the mass of the anti-de Sitter Schwarzschild 
space-time (see Section \ref{secAdS}),
\begin{equation} \label{AH-Penrose}
m \geq 
\frac{1}{2}\left[
\left( \frac{|\Sigma|}{\omega_{n-1}} \right)^{\frac{n-2}{n-1}} 
+ \left(\frac{|\Sigma|}{\omega_{n-1}}\right)^{\frac{n}{n-1}}
\right].
\end{equation}

In this paper, we prove weaker forms of this inequality for manifolds
which are graphs over the hyperbolic space $\bH^n$ when we endow the
manifold $\bH^n \times \bR$ with a certain hyperbolic metric. See
Theorem \ref{THM-Main}.

After the first version of this article appeared on arXiv,
de Lima and Gir\~ao posted an article dealing with another case
of the asymptotically hyperbolic Penrose inequality \cite{LimaGirao2}.
Rigidity was addressed by de Lima and Gir\~ao in \cite{LimaGirao3} 
and by Huang and Wu in \cite{HuangWu2}. The approach used in 
\cite{HuangWu2} does not require any further assumption and we shall
extend it to our context in Section \ref{secRigidity}.

The outline of this paper is as follows. In Section \ref{secAH}, we
define the mass of a general asymptotically hyperbolic manifold.
We explicit the anti-de Sitter Schwarzschild metric in Section \ref{secAdS}.
In Section \ref{secScal} we prove that the scalar curvature of a graph 
has divergence form (Equation \eqref{eqScalCurvDivg}) and that its 
integral is related to the mass (Lemma \ref{lmIntegrCourbScal}). 
In Section \ref{secPenrose}, we prove the first part of Theorem
\ref{THM-Main}. Rigidity is addressed in Section \ref{secRigidity}.

\subsection*{Acknowledgements}

We thank Julien Cortier and Hubert Bray for helpful conversations.
We are also grateful to Gerhard Huisken for enlightening discussions on
the Aleksandrov-Fenchel inequalities and to Lan Hsuan-Huang for pointing
us to the article \cite{HuangWu2}.
Further, we want to give a special thanks to Christophe Chalons and 
Jean-Louis Tu who helped us with the proof of the results stated in 
Appendix~\ref{appopensets}. 

\subsection*{A note}

After this paper was finished the articles \cite{LimaGirao4} by de Lima 
and Gir\~ao, and \cite{BrHuWa} by Brendle, Hung, and Wang appeared on 
arXiv. In the first of these papers an Alexandrov-Fenchel type inequality 
for hypersurfaces in hyperbolic space is stated, which together with 
Proposition \ref{prop_penrose_graph} implies the Penrose inequality 
\eqref{AH-Penrose} for graphs. Certain steps of the proof seem to need 
further clarification, for example the convergence of hypersurfaces to 
round spheres under the inverse mean curvature flow. However, combining 
with arguments of the second paper \cite{BrHuWa} the result should follow.
Note also that a special case of \cite[Theorem 2]{BrHuWa} follows from our 
formula \eqref{eqIntHV} in Section \ref{changetoeucl}.

\section{Preliminaries}

\subsection{Asymptotically hyperbolic manifolds and the mass}
\label{secAH}

We define the mass of an asymptotically hyperbolic manifold following
Chru{\'s}ciel and Herzlich, see \cite{ChruscielHerzlich} and 
\cite{HerzlichMassFormulae}. In the special case of conformally compact 
manifolds this definition coincides 
with the definition given by Wang in \cite{WangMass}. In what follows, 
$n$-dimensional hyperbolic space is denoted by $\bH^n$ and its metric is 
denoted by $b$. In polar coordinates $b = dr^2 + \sinh^2 r \sigma$ 
where $\sigma$ is the standard round metric on $S^{n-1}$. 

Set 
$\cN \definedas \{ V \in C^{\infty}(\bH^n) \mid \hess^b V = V b \}$.
This is a vector space with a basis of the functions 
\[
V_{(0)} = \cosh r, \, 
V_{(1)} = x^1 \sinh r, \dots , \,
V_{(n)} = x^n \sinh r,
\]
where $x^1, \dots, x^n$ are the coordinate functions
on $\bR^{n}$ restricted to $S^{n-1}$. If we consider $\bH^n$ as the upper 
unit hyperboloid in Minkowski space $\bR^{n,1}$ then the functions $V_{(i)}$ are the 
restrictions to $\bH^n$ of the coordinate functions of $\bR^{n,1}$. 
The vector space $\cN$ is equipped with a Lorentzian inner product 
$\eta$ characterized by the condition that the basis above is orthonormal, 
$\eta(V_{(0)}, V_{(0)}) = 1$, and $\eta(V_{(i)}, V_{(i)}) = -1$ for 
$i=1,\dots,n$. We also give $\cN$ a time orientation by specifying that 
$V_{(0)}$ is future directed. The subset $\cN^+$ of positive functions then 
coincides with the interior of the future lightcone. Further we denote by 
$\cN^1$ the subset of $\cN^+$ of functions $V$ with $\eta(V,V) = 1$, this 
is the unit hyperboloid in the future lightcone of $\cN$. All 
$V \in \cN^1$ can be constructed as follows. Choose an arbitrary point 
$p_0 \in \bH^n$. Then the function
\[
V \definedas \cosh d_b(p, \cdot)
\] 
is in $\cN^1$.

A Riemannian manifold $(M,g)$ is said to 
be {\em asymptotically hyperbolic} if there exist a compact subset and a
diffeomorphism at infinity $\Phi : M \setminus K \to \bH^n \setminus B$,
where $B$ is a closed ball in $\bH^n$, for which $\Phi_* g$ and $b$ are
uniformly equivalent on $\bH^n \setminus B$ and
\begin{subequations}
\begin{equation} \label{decay1}
\int_{\bH^n \setminus B} 
\left( 
| e |^2 + | \nabla^b e |^2
\right) \cosh r \, d\mu^b < \infty, 
\end{equation}
\begin{equation} \label{decay2}
\int_{\bH^n \setminus B} |\scal ^g + n(n-1)| \cosh r \, d\mu^b < \infty, 
\end{equation}
\end{subequations}
where $e \definedas \Phi_* g - b$ and $r$ is the (hyperbolic) distance
from an arbitrary given point in $\bH^n$.

The {\em mass functional} of $(M,g)$ with respect 
to $\Phi$ is the functional on $\cN$ defined by
\[
H_{\Phi} (V)
=
\lim_{r \to \infty} \int_{S_r} \left(
V (\operatorname{div}^b e- d \tr^b e) + (\tr^b e) dV - e(\nabla^b V, \cdot)
\right) (\nu_r) \, d \mu^b
\] 
Proposition~2.2 of \cite{ChruscielHerzlich} tells us that these limits 
exist and are finite under the asymptotic decay conditions 
\eqref{decay1}-\eqref{decay2}. If $\Phi$ is a chart at infinity as above 
and $A$ is an isometry of the hyperbolic metric $b$ then $A \circ \Phi$ 
is again such a chart and it is not complicated to verify that
\[
H_{A \circ \Phi} (V) = H_{\Phi} (V \circ A^{-1}) .
\]
If $\Phi_1$, $\Phi_2$ are charts at infinity as above, then 
from \cite[Theorem~2.3]{HerzlichMassFormulae}
we know that there is an isometry $A$ of $b$ so that 
$\Phi_2 = A \circ \Phi_1$ modulo lower order terms which do not 
affect the mass functional.

The mass functional $H_{\Phi}$ is timelike future directed if 
$H_{\Phi}(V) > 0$ for all $V \in \cN^+$. In this case the {\em mass} 
of the asymptotically hyperbolic manifold $(M,g)$ is defined by 
\[
m \definedas
\frac{1}{2(n-1)\omega_{n-1}} \inf_{\cN^1} H_{\Phi}(V).
\]
Further if $H_{\Phi}$ is timelike future directed we may replace
$\Phi$ by $A \circ \Phi$ for a suitably chosen isometry $A$ so that
$m = H_{\Phi} (V_{(0)})$. Coordinates with this property are called 
{\em balanced}.

The positive mass theorem for asymptotically hyperbolic manifolds, 
\cite[Theorem~4.1]{ChruscielHerzlich} and \cite[Theorem~1.1]{WangMass}, 
states that the mass functional is timelike future directed or zero 
for complete asymptotically hyperbolic spin manifolds with scalar 
curvature $\scal \geq -n(n-1)$. In 
\cite[Theorem~1.3]{AnderssonCaiGalloway} the same result is proved 
with the spin assumption replaced by assumptions on the dimension 
and on the geometry at infinity.

\subsection{Asymptotically hyperbolic graphs}
\label{secAHgraphs}

The purpose of this paper is to prove versions of the Riemannian 
Penrose inequality for an asymptotically hyperbolic graph over the 
hyperbolic space $\bH^n$. We consider such a graph as a submanifold of 
$\bH^{n+1}$. In what follows we will denote tensors associated to 
$\bH^{n+1}$ with a bar. In particular $\bbar$ will denote the hyperbolic 
metric on $\bH^{n+1}$. 

To shorten notation we now fix 
\[
V = V_{(0)} = \cosh r
\]
for the rest of the paper. As a model of $\bH^{n+1}$ we take 
$\bH^n \times \bR$ equipped with the metric
\[
\bbar \definedas b + V^2 ds \otimes ds
\] 

%


Let $\Omega$ be a relatively compact open subset and let
$f : \bH^n \setminus \Omega \to \bR$ be a continuous function which is 
smooth on $\bH^n \setminus \overline{\Omega}$. We consider the graph
\[
\Sigma \definedas 
\{(x, s) \in \bH^n \times \bR \mid f(x) = s \}.
\]
Define the diffeomorphism $\Phi: \Sigma \to \bH^n \setminus \Omega$ 
by $\Phi^{-1}(p) = (p, f(p))$. The push-forward of the metric induced 
on $\Sigma$ is 
\[
g \definedas \Phi_* \bbar 
= (\Phi^{-1})^* \bbar 
= b + V^2 df \otimes df.
\]

We will study the situation when the graph $\Sigma$ is asymptotically 
hyperbolic with respect to the chart $\Phi$, that is when 
\[
e = V^2 df \otimes df
\]
satisfies \eqref{decay1}-\eqref{decay2} and
\begin{equation}\label{decay3}
 |e| = V^2 |df|^2 \to 0\text{ at infinity.}
\end{equation}
Note that Condition \eqref{decay1} is a consequence of the following
condition:
\[
\int_{\bH^n \setminus B}
 \left( \left|df\right|^4 + \left|\hess f\right|^4 \right) \cosh^5 r \, d\mu^b
 < \infty,
\]
that is to say that $df$ belongs to a certain weighted Sobolev space.

If this holds we say that $f$ is an {\em asymptotically hyperbolic
function} and $\Sigma$ is an {\em asymptotically hyperbolic graph}.
We define $f$ to be {\em balanced at infinity} if $\Phi$ are balanced
coordinates at infinity. In this case the mass of $\Sigma$ is given by 
$m = H_{\Phi} (V)$ with $V = V_{(0)}$.

In this paper we will prove the following theorem which gives 
estimates similar to the Penrose inequality for asymptotically 
hyperbolic graphs. In certain cases this theorem also describes the
situation when equality is attained. For exact formulations see 
Theorem~\ref{THM-AH-Penrose-Graph-HS},
Theorem~\ref{THM-AH-Penrose-Graph}, and
Theorem~\ref{thm_rigidity}.

\begin{theorem}\label{THM-Main}
Let $\Omega \subset \bH^n$ be a relatively compact open subset of
$\bH^n$ with smooth boundary. Assume that $\Omega$ contains an inner ball 
centered at the origin of radius $r_0$. 
Let $f : \bH^n \setminus \Omega \to \bR$ be an asymptotically 
hyperbolic function which is balanced at infinity. Assume that $f$ is 
locally constant on $\partial \Omega$ and that $|df| \to \infty$ at 
$\partial \Omega$ so that $\partial \Omega$ is a horizon ($H^g = 0$).
Further assume that the scalar curvature of the graph of $f$ satisfies
$\scal \geq -n(n-1)$. Then the mass $m$ of the graph is bounded from 
below as follows.
\begin{itemize}
\item
If $\partial \Omega$ has non-negative mean curvature with respect to
the metric $b$, $H \geq 0$, we have 
\begin{equation*} 
m \geq 
\frac{n-2}{2^n (n-1) n^{\frac{n}{n-1}}} V(r_0)
\left( \frac{|\partial \Omega|}{\omega_{n-1}} \right)^{\frac{n-2}{n-1}}
\end{equation*}
and
\begin{equation*}
m \geq \frac{1}{2} V(r_0) \frac{|\partial \Omega|}{\omega_{n-1}}.
\end{equation*}
\item
If $\Omega$ is an h-convex subset of $\bH^n$ we have
\begin{equation*}
m \geq 
\frac{1}{2} \left[
\left( \frac{|\partial \Omega|}{\omega_{n-1}} \right)^{\frac{n-2}{n-1}} 
+ \sinh r_0 \frac{|\partial \Omega|}{\omega_{n-1}}\right].
\end{equation*}
If equality holds and $df(\eta)(x) \to +\infty$ as 
$x \to \partial \Omega$ where $\eta$ is the outward normal of the level
sets of $f$ then the graph of $f$ is isometric to the $t=0$ slice of 
the anti-de Sitter Schwarzschild space-time.
\end{itemize}
\end{theorem}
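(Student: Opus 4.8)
The plan is to convert the mass into a boundary integral over $\partial\Omega$ by integrating a divergence identity for the scalar curvature of the graph, then to bound that boundary integral below by a function of $|\partial\Omega|$ using geometric inequalities in $\bH^n$, and finally to obtain the rigidity statement by tracing the equality cases backwards à la Huang--Wu. For the first step I would compute $\scal^g$ for the graph metric $g = b + V^2\,df\otimes df$ on $\bH^n\setminus\Omega$ by writing the Gauss equation for $\Sigma\subset(\bH^n\times\bR,\bbar)$ with $\bbar = b + V^2\,ds\otimes ds$ and using the static-potential equation $\hess^b V = V b$. The outcome should be that $\bigl(\scal^g + n(n-1)\bigr)$ times a manifestly positive density (expected of the form $V(1+V^2|df|^2)^{-a}$ for a suitable exponent $a$) equals the $b$-divergence of an explicit $1$-form $\omega$, built linearly from $\hess^b f$, quadratically from $df$, and from $V$ and $\nabla^b V$, up to a further term that is non-negative (quadratic in the trace-free part of $\hess^b f$). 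This is Equation~\eqref{eqScalCurvDivg}, the hyperbolic counterpart of Lam's identity, the weight $V$ being forced by the geometry of the warped product.

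Next I would integrate the identity over $B_R\setminus\Omega$ and let $R\to\infty$. The divergence theorem turns the left side into a flux at infinity minus a flux across $\partial\Omega$. Using the decay conditions \eqref{decay1}--\eqref{decay3} on $e = V^2\,df\otimes df$ one shows that $\int_{S_R}\omega(\nu_R)\,d\mu^b$ differs by a vanishing amount from the Chru\'sciel--Herzlich mass integrand, hence converges to $2(n-1)\omega_{n-1}\,m$ (this is Lemma~\ref{lmIntegrCourbScal}, and balancedness of $\Phi$ is what lets us identify this with the mass via $m = H_\Phi(V_{(0)})$). On $\partial\Omega$, the hypotheses that $f$ is locally constant (so the tangential part of $df$ vanishes there) and that $|df|\to\infty$ — the latter being precisely what makes $\partial\Omega$ a minimal hypersurface of the graph, i.e. the horizon condition $H^g = 0$ — collapse $\omega(\eta)$ in the limit to a pointwise expression $\mathcal{Q}$ in the mean curvature $H^b$ of $\partial\Omega$ in $\bH^n$ and in $V,\partial_\eta V$ (of the shape $\mathcal{Q} = V H^b + (n-1)\,\partial_\eta V$ or a close variant, $\eta$ the outward $b$-normal of $\Omega$). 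Discarding the bulk integral, which is $\geq 0$ by $\scal^g + n(n-1)\geq 0$ together with the non-negative term from the divergence identity, gives
\[
2(n-1)\,\omega_{n-1}\,m \;\geq\; \int_{\partial\Omega}\mathcal{Q}\,d\mu^b ,
\]
which is Proposition~\ref{prop_penrose_graph}.

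It then remains to estimate $\int_{\partial\Omega}\mathcal{Q}\,d\mu^b$ from below. Since $B_{r_0}\subset\Omega$ we have $r\geq r_0$, hence $V\geq V(r_0)$, on $\partial\Omega$. In the first case I would feed $H^b\geq 0$ and $V\geq V(r_0)$ into elementary Minkowski- and Sobolev-type inequalities in $\bH^n$ for the bounding hypersurface $\partial\Omega$ (of the type $\int_{\partial\Omega}H^b\,d\mu^b\geq(n-1)|\partial\Omega|$ and $\int_{\partial\Omega}H^b\,d\mu^b\gtrsim|\partial\Omega|^{(n-2)/(n-1)}$, the latter with a non-sharp constant accounting for the odd numerical factor in the stated bound) to obtain the two estimates, tracking the constants $\omega_{n-1}$. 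In the h-convex case I would instead invoke the hyperbolic Aleksandrov--Fenchel (weighted Minkowski) inequality for horospherically convex hypersurfaces, which for $\Omega\supset B_{r_0}$ should yield $\int_{\partial\Omega}\mathcal{Q}\,d\mu^b \geq (n-1)\omega_{n-1}\bigl[(|\partial\Omega|/\omega_{n-1})^{(n-2)/(n-1)} + \sinh r_0\,|\partial\Omega|/\omega_{n-1}\bigr]$; this is the sharp-looking bound, and it is attained, as one checks by writing the $t=0$ slice of anti--de Sitter Schwarzschild as a radial graph over $\bH^n$ (then $\sinh r_0$ is the horospherical radius of the horizon).

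For rigidity I would follow \cite{HuangWu2}: equality forces every discarded quantity to vanish, namely $\scal^g\equiv -n(n-1)$ on the graph, the trace-free Hessian term of the divergence identity, and the defect in the Aleksandrov--Fenchel inequality; the last forces $\partial\Omega$ to be a geodesic sphere about the origin so $\Omega = B_{r_0}$, the vanishing Hessian term forces the level sets of $f$ to be umbilic, hence concentric geodesic spheres, so $f = f(r)$, and then $\scal^g = -n(n-1)$ is an ODE in $f(r)$ whose solution — with the branch pinned down by $df(\eta)\to +\infty$ at $\partial\Omega$ — is exactly the anti--de Sitter Schwarzschild graph function; the point of the Huang--Wu argument is that it runs without a priori control of the locus where $|df|$ blows up. The main obstacle is the geometric inequality in the h-convex case: a hyperbolic Aleksandrov--Fenchel inequality with the precise weight $V$ dictated by $\mathcal{Q}$ is what is genuinely hard and was not fully available, which is why only a \emph{weaker} form of \eqref{AH-Penrose} is obtained (with $\sinh r_0\,|\partial\Omega|/\omega_{n-1}$ in place of $(|\partial\Omega|/\omega_{n-1})^{n/(n-1)}$, the two coinciding only when $\Omega = B_{r_0}$); the secondary difficulty is making the equality analysis of Proposition~\ref{prop_penrose_graph} rigorous without assuming beforehand that the graph description is non-degenerate away from $\partial\Omega$.
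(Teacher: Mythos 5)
Your skeleton for the inequality part is the paper's: the divergence identity \eqref{eqScalCurvDivg}, its integration to get Lemma~\ref{lmIntegrCourbScal} and Proposition~\ref{prop_penrose_graph}, and Hoffman--Spruck plus the Minkowski formula for the first bullet. (Minor point: the boundary density is exactly $VH\,\tfrac{V^2|df|^2}{1+V^2|df|^2}\to VH$; the terms $-e(\nabla V,\nu)+(\tr^b e)\,dV(\nu)$ cancel, so there is no extra $(n-1)\,\partial_\eta V$ contribution of the kind you hedge on.) Two steps, however, have genuine gaps.

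First, in the h-convex case you assert the weighted estimate $\int_{\partial\Omega}VH\,d\mu\geq(n-1)\omega_{n-1}\bigl[(|\partial\Omega|/\omega_{n-1})^{\frac{n-2}{n-1}}+\sinh r_0\,|\partial\Omega|/\omega_{n-1}\bigr]$ by invoking ``the hyperbolic Aleksandrov--Fenchel inequality,'' and you yourself concede that such an inequality ``was not fully available.'' That concession is the problem: this estimate is precisely what must be supplied, and the paper does so by a device absent from your proposal. One passes to the Euclidean metric $\btil=b+dV\otimes dV$ (vertical projection of the hyperboloid onto $\bR^n$), computes $\sffr=\tfrac{V}{\sqrt{V^2-\<dV,\nu\>^2}}\bigl(\sff-\tfrac{\<dV,\nu\>}{V}\,b\bigr)$, uses h-convexity $\sff\geq b$ to discard the error term in \eqref{eqIntHV}, and then applies the \emph{classical Euclidean} Aleksandrov--Fenchel inequality to $\int\Htil\,d\widetilde{\mu}$ together with the Borisenko--Miquel support estimate to $(n-1)\int\<dV,\nu\>\,d\mu$. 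Without this reduction (or an equivalent substitute) the second bullet is not proved.

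Second, your rigidity mechanism rests on a term that does not exist. Identity \eqref{eqScalCurvDivg} is an \emph{exact} equality; there is no discarded non-negative ``trace-free Hessian'' remainder. Consequently, equality in the Penrose-type inequality yields only $\scal\equiv-n(n-1)$ and that $\partial\Omega$ is a round sphere centered at the origin; it gives no pointwise umbilicity of the level sets of $f$ and no reason for $f$ to be radial, so your ODE reduction never gets off the ground. The actual argument (Section~\ref{secRigidity}, following Huang--Wu) is of a different nature: one first proves that $\Hbar$ cannot change sign (via the structure of $\{\Hbar=0\}$, convex points, and the Hopf lemma), deduces that $\Hbar g^{ij}-\sffb^{ij}$ is positive semi-definite so that the scalar curvature operator is elliptic in the relevant regime, establishes interior and boundary strong maximum principles for it, and then slides the explicit anti-de Sitter Schwarzschild graph $f_{\text{\rm AdS-Schw}}+\lambda$ until it touches $f$. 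None of that machinery is recoverable from ``every discarded quantity vanishes.''
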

Note that since $f$ is locally constant on $\partial \Omega$, the areas
of $\partial \Omega$ computed using the metric $b$ and using the metric
induced on the graph are equal.

\subsection{The anti-de Sitter Schwarzschild space-time}
\label{secAdS}

We remind the reader that the metric outside the horizon of the 
anti-de Sitter-Schwarzschild space in (spatial) dimension $n \geq 3$ and 
of mass $m \geq 0$ is given by
\begin{equation*}
\gamma_{\text{\rm AdS-Schw}} = 
- \left(1+\rho^2 - \frac{2m}{\rho^{n-2}}\right) dt^2
+ \frac{d\rho^2}{1+\rho^2 - \frac{2m}{\rho^{n-2}}} + \rho^2 \sigma,
\end{equation*}
where $\sigma$ is the standard round metric on the sphere $S^{n-1}$.
See for example \cite[Section 6]{MarsPenrose}. The horizon is the 
surface $\rho = \rho_0(m)$, where $\rho_0=\rho_0(m)$ is the unique solution of 
\[
1 + \rho^2 - \frac{2m}{\rho^{n-2}} = 0.
\]
Its area is given by $A_m = \omega_{n-1} \rho_0^{n-1}$, hence multiplying
the previous formula by $\rho_0^{n-2}$, we get
\[\begin{aligned}
m
&= \frac{1}{2} \left[ \rho_0^{n-2} + \rho_0^{n} \right]\\
&= \frac{1}{2} \left[
 \left(\frac{A_m}{\omega_{n-1}}\right)^{\frac{n-2}{n-1}}
+ \left(\frac{A_m}{\omega_{n-1}}\right)^{\frac{n}{n-1}} \right].
\end{aligned}\]
This justifies the form of the right-hand side of \eqref{AH-Penrose}.

Restricting to the slice $t=0$, we get the following Riemannian metric.
\begin{equation} \label{eqAdSSchSpatialArea}
g_{\text{\rm AdS-Schw}} = 
\frac{d\rho^2}{1+\rho^2 - \frac{2m}{\rho^{n-2}}} + \rho^2 \sigma.
\end{equation}

We want to explicit the spatial metric \eqref{eqAdSSchSpatialArea}
as the induced metric of a graph $\Sigma_{\text{\rm AdS-Schw}}$. By 
rotational symmetry, we choose the point $\rho=0$ as the origin 
and $f = f(\rho)$. In this coordinate system, the reference hyperbolic 
metric $b$ is given by 
\[
b = \frac{d\rho^2}{1 + \rho^2} + \rho^2 \sigma.
\] 
The function $V$ is given by $V = \sqrt{1+\rho^2}$. Hence we seek a 
function $f$ satisfying
\[
V^2 \left(\pdiff{f}{\rho}\right)^2 =
\frac{1}{1+\rho^2 - \frac{2m}{\rho^{n-2}}} - \frac{1}{1+\rho^2}.
\]
Note that when $\rho$ is close to $\rho_0$, this forces
$\pdiff{f}{\rho} = O((\rho-\rho_0)^{-\frac{1}{2}})$. Hence we can set
\begin{equation}
\label{eqHeightAdSSchw}
f(\rho) = 
\int_{\rho_0}^{\rho} \frac{1}{\sqrt{1+s^2}}
\sqrt{\frac{1}{1+s^2 - \frac{2m}{s^{n-2}}} - \frac{1}{1+s^2}} 
\, ds.
\end{equation}
Similarly, when $\rho \to \infty$, $f$ converges to a constant. This
contrasts with the Euclidean case where 
the Schwarzschild metric written as a graph is a half parabola in any 
radial direction, see \cite{LamGraph}.

\section{Scalar curvature of graphs in $\bH^{n+1}$}\label{secScal}

\subsection{Computation of scalar curvature}
\label{subsection_comp_scal}

Let $f : \bH^n \setminus \overline{\Omega} \to \bR$ be a smooth function.
Recall that we defined its graph as
\[
\Sigma \definedas 
\{(x, s) \in \bH^n \times \bR \mid f(x) = s \} = F^{-1}(0),
\]
where $F(x, s) \definedas f(x) - s$. For vector fields $X$ and $Y$ on 
$\bH^n$ the vector fields $\overline{X} = X + \nabla_X f \partial_0$ and
$\overline{Y} = Y + \nabla_Y f \partial_0$ are tangent to $\Sigma$. We
use coordinates on $\bH^n$ to parametrize $\Sigma$. 

Recall that we identify $\bH^{n+1}$ with $\bH^n \times \bR$ with the 
metric $\bbar = b + V^2 ds\otimes ds$. When using coordinate notation,
latin indices $i, j, \ldots \in \{1, \ldots, n\}$ denote (any)
coordinates on $\bH^n$ while a zero index denotes the $s$-coordinate
on $\bR$. Greek indices go from $0$ to $n$, hence denote coordinates
on $\bH^{n+1}$. The Christoffel symbols of $\bbar$ are collected in the
following Lemma.
\begin{lemma}
\[\left\lbrace\begin{aligned}
\Gambar^0_{00} & = 0 \\
\Gambar^i_{00} & = -V \nabla^i V \\
\Gambar^0_{i0} & = \frac{\nabla_i V}{V} \\
\Gambar^i_{j0} & = 0 \\
\Gambar^0_{ij} & = 0 \\
\Gambar^k_{ij} & = \Gamma^k_{ij}
\quad \text{(Christoffel symbols of $\bH^n$).}
\end{aligned}\right.\]
\end{lemma}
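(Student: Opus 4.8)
The plan is to compute the Christoffel symbols directly from the Koszul formula
\[
\Gambar^\gamma_{\alpha\beta} = \frac{1}{2}\bbar^{\gamma\delta}\left(\partial_\alpha \bbar_{\beta\delta} + \partial_\beta \bbar_{\alpha\delta} - \partial_\delta \bbar_{\alpha\beta}\right),
\]
exploiting the block structure of the warped product metric $\bbar = b + V^2\, ds\otimes ds$. In the coordinates described above the nonzero metric components are $\bbar_{ij} = b_{ij}$ and $\bbar_{00} = V^2$, with inverse $\bbar^{ij} = b^{ij}$, $\bbar^{00} = V^{-2}$, and all mixed components $\bbar_{i0}$, $\bbar^{i0}$ vanish. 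The key simplification is that neither $b$ nor $V$ depends on the fiber coordinate $s$, so $\partial_0$ annihilates every metric component; the only new derivative that appears is $\partial_i \bbar_{00} = 2V\,\partial_i V = 2V\,\nabla_i V$, since for a function $\nabla_i V = \partial_i V$.

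First I would treat the two ``pure'' cases. For $\Gambar^k_{ij}$ the term with $\delta = 0$ drops out because $\bbar^{k0} = 0$, and the term with $\delta = l$ reproduces exactly the Koszul formula for $b$ on $\bH^n$, so $\Gambar^k_{ij} = \Gamma^k_{ij}$. For $\Gambar^0_{00}$ every term involves a $\partial_0$ of a metric component, hence it vanishes. Next the mixed cases: in $\Gambar^i_{j0}$ only $\delta = k$ contributes, and each of $\partial_j \bbar_{0k}$, $\partial_0 \bbar_{jk}$, $\partial_k \bbar_{j0}$ is zero, so $\Gambar^i_{j0} = 0$; similarly $\Gambar^0_{ij} = \frac{1}{2}\bbar^{00}\left(\partial_i \bbar_{j0} + \partial_j \bbar_{i0} - \partial_0 \bbar_{ij}\right) = 0$.

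Finally the two components that actually feel the warping. In $\Gambar^0_{i0}$ only $\delta = 0$ survives, giving $\frac{1}{2}\bbar^{00}\partial_i \bbar_{00} = \frac{1}{2V^2}\cdot 2V\nabla_i V = \frac{\nabla_i V}{V}$. In $\Gambar^i_{00}$ only $\delta = j$ survives and the two $\partial_0$-terms vanish, leaving $-\frac{1}{2}\bbar^{ij}\partial_j \bbar_{00} = -\frac{1}{2}b^{ij}\cdot 2V\nabla_j V = -V\nabla^i V$. This exhausts all cases. There is no real obstacle: the computation is routine, and the only point requiring a little care is the bookkeeping of index positions — in particular distinguishing $\nabla_i V$ (which equals $\partial_i V$) from $\nabla^i V = b^{ij}\nabla_j V$ — together with the observation that the vanishing of the mixed metric components $\bbar_{i0}$ decouples the base and fiber blocks in every Christoffel symbol, the coupling entering only through the single function $V$.
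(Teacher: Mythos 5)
Your computation is correct, and it is exactly the routine verification the paper intends: the lemma is stated without proof, and the direct coordinate formula for the Christoffel symbols applied to the block-diagonal warped metric $\bbar = b + V^2\,ds\otimes ds$ (with $\partial_0$ annihilating all metric components and $\partial_i\bbar_{00}=2V\nabla_i V$) is the standard way to obtain all six identities. All index placements, including the distinction between $\nabla_i V$ and $\nabla^i V = b^{ij}\nabla_j V$, check out.
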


The induced metric on $\Sigma$ is given by
\[
g(X, Y) 
= \bbar(\overline{X}, \overline{Y}) 
= b(X, Y) + V^2 \nabla_X f \nabla_Y f.
\]
The second fundamental form $\sffb$ of $\Sigma$ is given by
\[
\begin{split}
\sffb(\overline{X}, \overline{Y})
&= 
\frac{1}{\left|\nablabar F\right|} 
\hessbardd{\overline{X}}{\overline{Y}} F\\
&= 
\frac{1}{\left|\nablabar F\right|}\left[\hessbardd{X}{Y} F
+ \nabla_X f \hessbardd{\partial_0}{Y} F
+ \nabla_Y f \hessbardd{X}{\partial_0} F\right.\\
&\qquad 
\left.+ \nabla_X f \nabla_Y f 
\hessbardd{\partial_0}{\partial_0} F\right] \\
&= 
\frac{1}{\sqrt{V^{-2} + \left|df\right|^2}} \left[\hessdd{X}{Y} f
+ \frac{\nabla_X f \nabla_Y V + \nabla_X V \nabla_Y f}{V}
+ V \<df, dV\> \nabla_X f \nabla_Y f\right].
\end{split}
\]
Using component notation we get
\begin{equation*} 
\sffbdd{i}{j} 
= 
\frac{V}{\sqrt{1 + V^2\left|df\right|^2}} 
\left[\hessdd{i}{j} f + \frac{\nabla_i f \nabla_j V 
+ \nabla_i V \nabla_j f}{V} + V \<df, dV\> \nabla_i f \nabla_j f
\right].
\end{equation*}
The metric $g$ and its inverse are given by
\begin{equation*}
\begin{aligned}
g_{ij} & = b_{ij} + V^2 \nabla_i f \nabla_j f,\\
g^{ij} & = b^{ij} - \frac{V^2 \nabla^i f \nabla^j f}{1 + V^2 |df|^2}.
\end{aligned}
\end{equation*}
We compute the mean curvature of $\Sigma$,
\[ \begin{split}
\Hbar 
&= 
g^{ij} \sffdd{i}{j} \\
&= 
\frac{1}{\left|\nablabar F\right|} 
\left(b^{ij} - \frac{V^2 \nabla^i f \nabla^j f}{1 + V^2 |df|^2} \right) \\ 
&\qquad
\cdot \left[ \hessdd{i}{j} f 
+ \frac{\nabla_i f \nabla_j V + \nabla_i V \nabla_j f}{V} 
+ V \<df, dV\> \nabla_i f \nabla_j f \right] \\
&= 
\frac{1}{\left|\nablabar F\right|} 
\left[\Delta f + 2 \left\<df, \frac{dV}{V}\right\> 
+ V \<df, dV\> |df|^2\right. \\
&\qquad 
-\frac{V^2}{1+V^2 |df|^2}\left(\<\hess f, df\otimes df\>
+ 2 |df|^2 \left\<df, \frac{dV}{V}\right\> \right. \\
&\qquad
\left. \left. 
+ V^2 |df|^4 \left\<df, \frac{dV}{V}\right\>\right)\right] \\
&= 
\frac{1}{\left|\nablabar F\right|} 
\left[
\Delta f - \frac{ V^2 \<\hess f, df\otimes df\>}{1+V^2 |df|^2}
+ \frac{2 + V^2 |df|^2}{1+V^2 |df|^2} \left\<df, \frac{dV}{V}\right\> 
\right],
\end{split} \]
or 
\begin{equation*}
\Hbar = 
\frac{1}{\left|\nablabar F\right|} 
\left[
\Delta f - \frac{ V^2 \<\hess f, df\otimes df\>}{1+V^2 |df|^2}
+ \left(1 + \frac{1}{1+V^2 |df|^2}\right) 
\left\<df, \frac{dV}{V}\right\>
\right].
\end{equation*}
The norm of the second fundamental form of $\Sigma$ is given by
\[ \begin{split}
\left| \sffb\right |^2_g
&= 
g^{ik} g^{jl} \sffbdd{i}{j} \sffbdd{k}{l}\\
&= 
\left(b^{ik} - \frac{V^2 \nabla^i f \nabla^k f}{1 + V^2 |df|^2}\right) 
\left(b^{jl} - \frac{V^2 \nabla^j f \nabla^l f}{1 + V^2 |df|^2}\right)
\sffbdd{i}{j} \sffbdd{k}{l} \\
&= 
b^{ik} b^{jl} \sffbdd{i}{j} \sffbdd{k}{l}
- 2 \frac{V^2 b^{ik} \nabla^j f \nabla^l f}{1 + V^2 |df|^2}
\sffbdd{i}{j} \sffbdd{k}{l}
+ \frac{V^4 \nabla^i f \nabla^j f \nabla^k f \nabla^l f}
{\left(1 + V^2 |df|^2\right)^2} 
\sffbdd{i}{j} \sffbdd{k}{l}\\
&= 
\underbrace{\left|\sffb\right|_b^2}_{(A)}
\underbrace{- 2 \frac{V^2 b^{ik} \nabla^j f \nabla^l f}
{1 + V^2 |df|^2}\sffbdd{i}{j} \sffbdd{k}{l}}_{(B)}
+ 
\underbrace{\left( \frac{V^2 \sffb(\nabla f, \nabla f)}
{1+ V^2 |df|^2} \right)^2}_{(C)}.
\end{split} \]
We compute each term separately. First
\[ \begin{split}
(A)
&= 
\left|\sffb\right|_b^2 \\
&= 
\frac{V^2}{1+V^2 |df|^2} 
\left[\left|\hess f\right|^2 
+ 2 |df|^2 \left|\frac{dV}{V}\right|^2 
+ 2 \left\<df, \frac{dV}{V}\right\>^2 \right. \\
&\qquad 
+ V^4 |df|^4 \left\<df, \frac{dV}{V}\right\>^2 
+ 4 \left\<\hess f, df\otimes \frac{dV}{V}\right\>\\
&\qquad \left. + 2 V^2 \left\<df, \frac{dV}{V}\right\> 
\<\hess f, df\otimes df\> 
+ 4 V^2 |df|^2 \left\<df, \frac{dV}{V}\right\>^2 \right].
\end{split} \]
Next,
\[ \begin{split}
(B)
&= 
- 2 \frac{V^2 b^{ik}}{1 + V^2 |df|^2} 
\nabla^j f \sffbdd{i}{j} \nabla^l f \sffbdd{k}{l}\\
&= 
-2 \frac{V^4}{1 + V^2 |df|^2}\left|\sffb(\nabla f, \cdot)\right|^2 \\
&= 
-2 \frac{V^4}{(1 + V^2 |df|^2)^2}\left|\hess f(\nabla f, \cdot) 
+ (1+V^2 |df|^2) \left\<df, \frac{dV}{V}\right\> df 
+ |df|^2 \frac{dV}{V}\right|^2 \\
&= 
- 2 \frac{V^4}{(1 + V^2 |df|^2)^2} 
\left[\left|\hess f(\nabla f, \cdot)\right|^2 
+ (1+V^2 |df|^2)^2 \left\<df, \frac{dV}{V}\right\>^2 |df|^2 
+ |df|^4 \left|\frac{dV}{V}\right|^2\right.\\
&\qquad 
+ 2 (1+V^2 |df|^2) \hess f(\nabla f, \nabla f) 
\left\<df, \frac{dV}{V}\right\> 
+ 2 |df|^2 \left\<\hess f,\nabla f\otimes\frac{\nabla V}{V}\right\> \\
&\qquad 
+ \left. 2 (1+V^2 |df|^2) |df|^2 
\left\<df, \frac{dV}{V}\right\>^2 \right],
\end{split} \]
and finally
\[ \begin{split}
(C)
&= 
\left( \frac{V^2 \sffb(\nabla f, \nabla f)}{1+ V^2 |df|^2} \right)^2 \\
&= 
\frac{V^6}{\left(1 + V^2 |df|^2\right)^3} 
\left[\nabla^i f \nabla^j f \hessdd{i}{j} f 
+ 2 |df|^2 \left\<df, \frac{dV}{V}\right\> 
+ |df|^4 V^2 \left\<df, \frac{dV}{V}\right\>\right]^2 \\
&=
\frac{V^6}{\left(1 + V^2 |df|^2\right)^3} 
\left[\nabla^i f \nabla^j f \hessdd{i}{j} f 
+ (2 + V^2 |df|^2) |df|^2 \left\<df, \frac{dV}{V}\right\>\right]^2 \\
&= 
\frac{V^2}{1 + V^2 |df|^2}
\left[\frac{V^2 \<\hess f, df \otimes df\>}{1+ V^2 |df|^2} 
+ \left(1 + \frac{1}{1+V^2 |df|^2}\right)
V^2 |df|^2 \left\<df, \frac{dV}{V}\right\>\right]^2 .
\end{split} \]
Hence
\[ \begin{split}
\lefteqn{\Hbar^2 - |\sffb|_g^2}\\
&= 
\frac{V^2}{1+V^2|df|^2}
\left( \left[
\Delta f - \frac{ V^2 \<\hess f, df\otimes df\>}{1+V^2 |df|^2} 
+ \left(1 + \frac{1}{1+V^2 |df|^2}\right) 
\left\<df, \frac{dV}{V}\right\>\right]^2\right.\\
&\qquad 
-\left[\frac{V^2 \<\hess f, df\otimes df\>}{1+ V^2 |df|^2} 
+ \left(1 + \frac{1}{1+V^2 |df|^2}\right) V^2 |df|^2 
\left\<df, \frac{dV}{V}\right\>\right]^2\\
&\qquad 
-\left|\hess f\right|^2 - 2 |df|^2 \left|\frac{dV}{V}\right|^2 
- 2 \left\<df, \frac{dV}{V}\right\>^2 
- V^4 |df|^4 \left\<df, \frac{dV}{V}\right\>^2 \\
&\qquad 
- 4 \left\<\hess f, df\otimes \frac{dV}{V}\right\> 
- 2 V^2 \left\<df, \frac{dV}{V}\right\> \<\hess f, df\otimes df\> 
- 4 V^2 |df|^2 \left\<df, \frac{dV}{V}\right\>^2\\
&\qquad 
+ 2 \frac{V^2}{1 + V^2 |df|^2}
\left[\left| \hess f(\nabla f, \cdot)\right|^2 
+ (1+V^2 |df|^2)^2 \left\<df, \frac{dV}{V}\right\>^2 |df|^2 
+ |df|^4 \left|\frac{dV}{V}\right|^2\right.\\
&\qquad 
+ 2 (1+V^2 |df|^2) \hess f(\nabla f, \nabla f) 
\left\<df, \frac{dV}{V}\right\> 
+ 2 |df|^2 \left\<\hess f,\nabla f\otimes\frac{\nabla V}{V}\right\> \\
&\qquad 
+ \left.\left. 
2 (1+V^2 |df|^2) |df|^2 \left\<df, \frac{dV}{V}\right\>^2 
\right]\right) ,
\end{split}\]
and
\[\begin{split}
\lefteqn{\Hbar^2 - |\sffb|_g^2}\\
&= 
\frac{V^2}{1+V^2|df|^2}
\left(\left[
\Delta f + (2 + V^2 |df|^2) \left\<df,\frac{dV}{V}\right\>
\right]\right.\\
&\qquad 
\cdot \left[ 
\Delta f 
- \frac{2 V^2}{1+V^2|df|^2} \left\<\hess f, df \otimes df\right\> 
+ (1 - V^2|df|^2) \left(1+\frac{1}{1+V^2|df|^2} \right)
\left\<df,\frac{dV}{V}\right\>
\right]\\
&\qquad 
- \left|\hess f\right|^2 
+ 2 \frac{V^2}{1 + V^2 |df|^2} \left|\hess f(\nabla f,\cdot)\right|^2 
- \frac{2}{1+V^2|df|^2} |df|^2 \left|\frac{dV}{V}\right|^2 \\
&\qquad 
+ \left(-2 + 2 V^2 |df|^2 + V^4 |df|^4\right) 
\left\<df, \frac{dV}{V}\right\>^2 
- \frac{4}{1+V^2 |df|^2} 
\left\<\hess f,\nabla f\otimes\frac{\nabla V}{V}\right\>\\
&\qquad 
\left. + 2 V^2 \left\<\hess f, df\otimes df\right\> 
\left\<df,\frac{dV}{V}\right\>\right) \\
&= 
\frac{V^2}{1+V^2|df|^2}
\left[ \left( \Delta f \right)^2 - \left|\hess f\right|^2 
+ 2 \frac{V^2}{1 + V^2 |df|^2} 
\left(\left|\hess f(\nabla f, \cdot)\right|^2 
- \Delta f \left\<\hess f, df\otimes df\right\> \right)\right.\\
&\qquad 
+ \left(2 + \frac{2}{1+V^2|df|^2} \right) 
\Delta f \left\<df,\frac{dV}{V}\right\> 
- \frac{2 V^2}{1+V^2|df|^2} 
\left\<\hess f, df\otimes df\right\> 
\left\<df,\frac{dV}{V}\right\>\\
&\qquad 
+ \left.\frac{2}{1+V^2 |df|^2} \left\<df,\frac{dV}{V}\right\>^2 
- \frac{2}{1+V^2|df|^2} |df|^2 \left|\frac{dV}{V}\right|^2 
- \frac{4}{1+V^2 |df|^2} 
\left\<\hess f, df\otimes \frac{dV}{V}\right\>\right] .
\end{split} \]
By taking the trace of the Gauss equation for $\Sigma$,
we finally arrive at the following formula for the scalar
curvature $\scal$ of $\Sigma$
\begin{equation}\label{eqScalRen}
\begin{split}
\lefteqn{\scal + n(n-1)}\\
&= \Hbar^2 - |\sffb|_g^2\\
&= 
\frac{V^2}{1+V^2|df|^2}\left[ \left(\Delta f\right)^2 
- \left|\hess f\right|^2 
+ 2 \frac{V^2}{1 + V^2 |df|^2} 
\left(\left|\hess f(\nabla f, \cdot)\right|^2 
- \Delta f \left\<\hess f, df\otimes df\right\>\right)\right. \\
&\qquad 
+ \frac{2}{1+V^2|df|^2} \left\<df,\frac{dV}{V}\right\>
\left(\Delta f - V^2 \left\<\hess f, df\otimes df\right\> 
+ \left\<df,\frac{dV}{V}\right\> \right)
+ 2 \left\<df,\frac{dV}{V}\right\> \Delta f \\
&\qquad 
\left. - \frac{2}{1+V^2|df|^2} |df|^2 \left|\frac{dV}{V}\right|^2 
- \frac{4}{1+V^2 |df|^2} 
\left\<\hess f, df\otimes \frac{dV}{V}\right\>\right].
\end{split}
\end{equation}

In view of \cite[Proof of Theorem~5]{LamGraph} and
\cite[Definition~3.3]{HerzlichMassFormulae}, we compute
\[
\divg^b \left[\frac{1}{1+V^2|df|^2} \left(
V \divg^b e - V d \tr^b e - e(\nabla V, \cdot) + (\tr^b e) dV
\right)\right]
\]
with $e = V^2 df \otimes df$. We have
\[ \begin{split}
& V \divg^b e - V d \tr^b e - e(\nabla V, \cdot) + (\tr^b e) dV\\
&\qquad = 
2 V^2 \left\<df, dV\right\> df + V^3 \Delta f df 
+ V^3 \<\hess f, df \otimes \cdot\> \\
& \qquad \qquad 
- V d \tr^b(V^2|df|^2) - V^2 \left\<df, dV\right\> df + V^2 |df|^2 dV \\
&\qquad =
V^3 \Delta f df - V^3 \<\hess f, df \otimes \cdot\> - V^2 |df|^2
dV + V^2 \left\<df, dV\right\> df
\end{split} \]
and
\[ \begin{split}
& \divg^b \left(V \divg^b e - V d \tr^b e - e(\nabla V, \cdot) 
+ (\tr^b e) dV\right)\\
&\qquad =
\divg^b \left(V^3 \Delta f df - V^3 \<\hess f, df \otimes \cdot\> 
- V^2 |df|^2 dV + V^2 \left\<df, dV\right\> df\right) \\
&\qquad =
3 V^2 \Delta f \<df, dV\> + V^3 \<d \Delta f, df\> + V^3 (\Delta f)^2 \\
&\qquad \qquad 
- 3 V^2 \<\hess f, df \otimes dV\> - V^3 \<\divg^b \hess f, df\> 
- V^3 |\hess f|^2\\
&\qquad \qquad 
- 2 V |df|^2 |dV|^2 - 2 V^2 \<\hess f, df \otimes dV\> 
- V^2 |df|^2 \Delta V\\
&\qquad \qquad 
+ 2 V \<df, dV\>^2 + V^2 \<\hess f, dV \otimes df\> 
+ V^2 \<df \otimes df, \hess V \> + V^2 \<df, dV\> \Delta f\\
&\qquad =
V^3 \left[(\Delta f)^2 - |\hess f|^2\right] 
- 4 V^2 \<\hess f, df \otimes dV\> + 4 V^2 \<df, dV\> \Delta f \\
&\qquad \qquad 
+ V^3 \<d \Delta f - \divg^b \hess f, df\> - (n-1) V^3 |df|^2\\
&\qquad \qquad 
+ 2 V \<df, dV\>^2 - 2 V |df|^2 |dV|^2 \\
&\qquad =
V^3 \left[(\Delta f)^2 - |\hess f|^2\right] 
- 4 V^2 \<\hess f, df \otimes dV\> + 4 V^2 \<df, dV\> \Delta f \\
&\qquad \qquad 
+ 2 V \<df, dV\>^2 - 2 V |df|^2 |dV|^2.
\end{split} \]
Further, 
\[ \begin{split}
& \left\<d\left(\frac{1}{1+V^2|df|^2}\right), 
V \divg^b e - V d \tr^b e - e(\nabla V, \cdot) + (\tr^b e) dV\right\> \\
&\qquad = 
\left\<\frac{-2 V |df|^2 dV 
- 2 V^2 \<\hess f, df \otimes \cdot\>}{(1+V^2|df|^2)^2}, 
V^3 \Delta f df - V^3 \<\hess s, df \otimes \cdot\> - V^2 |df|^2 dV 
+ V^2 \left\<df, dV\right\> df\right\>\\
&\qquad = 
\frac{-2}{(1+V^2 |df|^2)^2} \left[V^4 \Delta f \<df, dV\> 
- 2|df|^2 V^4 \<\hess f, df \otimes dV\> 
- V^3 |df|^4 |dV|^2 + V^3 |df|^2 \<df, dV\>^2\right. \\
&\qquad \qquad 
\left. + V^5 \Delta f \<\hess f, df \otimes df\> - V^5
|\<\hess f, df \otimes\cdot\>|^2 
+ V^4 \<df, dV\> \<\hess f, df \otimes df\>\right], 
\end{split} \]
so
\[ \begin{split}
&\divg^b \left[\frac{1}{1+V^2|df|^2} 
\left(V \divg^b e - V d \tr^b e - e(\nabla V, \cdot) 
+ (\tr^b e) dV\right)\right]\\
&\qquad =
\frac{1}{1+V^2 |df|^2} \left[V^3 
\left((\Delta f)^2 - |\hess f|^2\right) 
- 4 V^2 \<\hess f, df \otimes dV\> 
+ 4 V^2 \<df, dV\> \Delta f\right.\\
&\qquad \qquad \qquad 
\left. + 2 V \<df, dV\>^2 - 2 V |df|^2 |dV|^2\right]\\
&\qquad \qquad 
- 
\frac{2}{(1+V^2 |df|^2)^2} \left[V^4 \Delta f \<df, dV\> 
- |df|^2 V^4 \<\hess f, df \otimes dV\> - V^3 |df|^4 |dV|^2 
+ V^3 |df|^2 \<df, dV\>^2\right.\\
&\qquad \qquad \qquad
\left. + V^5 \Delta f \<\hess f, df \otimes df\> 
- V^5 |\<\hess f, df \otimes\cdot\>|^2 
+ V^4 \<df, dV\> \<\hess f, df \otimes df\>\right]\\
&\qquad =
\frac{1}{1+V^2 |df|^2} 
\left[V^3 \left((\Delta f)^2 - |\hess f|^2\right) 
- \frac{2}{1+V^2 |df|^2} \left(V^5 \Delta f 
\<\hess f, df \otimes df\> 
- V^5 |\<\hess f, df \otimes\cdot\>|^2\right)\right.\\
& \qquad \qquad 
- \frac{4 V^2}{1+V^2 |df|^2} \<\hess f, df \otimes dV\>
+ \frac{2 V}{1+V^2 |df|^2} \left(\<df, dV\>^2 - |df|^2 |dV|^2\right)\\
& \qquad \qquad \qquad 
\left. - \frac{2 V^4}{1+V^2 |df|^2} \<df, dV\> 
\<\hess f, df \otimes df\> 
+ \left(2 + \frac{1}{1+V^2 |df|^2}\right) 
\Delta f |df|^2 \<df, dV\>\right].
\end{split} \]

Comparing this formula with Equation \eqref{eqScalRen} we get
\begin{equation} \label{eqScalCurvDivg}
\begin{split}
&V \left(\scal + n(n-1)\right) \\
&\qquad = 
\divg^b \left[\frac{1}{1+V^2|df|^2} \left(
V \divg^b e - V d \tr^b e - e(\nabla V, \cdot) + (\tr^b e) dV
\right)\right],
\end{split}
\end{equation}
where $e = V^2 df \otimes df$.

\subsection{A mass formula}

We now integrate Formula \eqref{eqScalCurvDivg} from the previous section 
over an outer domain under the additional condition that $f$ is locally 
constant on the boundary.

\begin{lemma} \label{lmIntegrCourbScal}
Let $\Omega \subset \bH^n$ be a relatively compact open subset of
$\bH^n$ with smooth boundary. Let $f : \bH^n \setminus \Omega \to \bR$
be an asymptotically hyperbolic function which is locally constant on 
$\partial \Omega$ and such that $df \neq 0$ at every point of 
$\partial \Omega$. Then
\begin{equation} \label{eqIntegrCourbScal}
H_\Phi(V) 
= 
\int_{\bH^n \setminus \Omega} 
\frac{V [\scal + n(n-1)]}{\sqrt{1 + V^2 |df|^2}} \, d\mu^g 
+ 
\int_{\partial \Omega} H V \frac{V^2 |df|^2}{1+V^2 |df|^2} \, d\mu^b.
\end{equation}
Here $H$ is the mean curvature of $\partial \Omega$ with respect to 
the metric $b$.
\end{lemma}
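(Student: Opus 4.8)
The plan is to integrate the pointwise identity \eqref{eqScalCurvDivg} over the exterior region and apply the divergence theorem, letting the outer boundary escape to infinity. Write
\[
\omega \definedas \frac{1}{1+V^2|df|^2}\,\Theta, \qquad
\Theta \definedas V\divg^b e - V\,d\tr^b e - e(\nabla V,\cdot) + (\tr^b e)\,dV ,
\]
with $e = V^2\,df\otimes df$, so that \eqref{eqScalCurvDivg} becomes $\divg^b\omega = V(\scal+n(n-1))$; recall also from Section~\ref{subsection_comp_scal} the explicit form
\[
\Theta = V^3\Delta f\,df - V^3\langle\hess f, df\otimes\cdot\rangle - V^2|df|^2\,dV + V^2\langle df,dV\rangle\,df .
\]
Let $B_r$ denote the geodesic ball of radius $r$ about the fixed origin, $S_r = \partial B_r$, and $D_r \definedas (\bH^n\setminus\Omega)\cap B_r$. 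Applying the divergence theorem on $D_r$, whose outward unit normal is $\nu_r = \partial_r$ along $S_r$ and $-\mu$ along $\partial\Omega$ (here $\mu$ is the unit $b$-normal of $\partial\Omega$ pointing out of $\Omega$, so that $H=\divg^b\mu$ is the mean curvature in the statement), gives
\[
\int_{D_r} V\big(\scal+n(n-1)\big)\,d\mu^b
= \int_{S_r}\omega(\nu_r)\,d\mu^b - \int_{\partial\Omega}\omega(\mu)\,d\mu^b .
\]
Since $\det g = (1+V^2|df|^2)\det b$, the left-hand side equals $\int_{D_r}\frac{V(\scal+n(n-1))}{\sqrt{1+V^2|df|^2}}\,d\mu^g$, which by \eqref{decay2} converges as $r\to\infty$ to the first term on the right of \eqref{eqIntegrCourbScal}.

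For the inner boundary term I would use that $f$ is locally constant on $\partial\Omega$: then $df^\sharp$ is $b$-orthogonal to $\partial\Omega$, say $df^\sharp = a\mu$ with $a^2 = |df|^2$, and for $X,Y$ tangent to $\partial\Omega$ one has $\hess f(X,Y) = a\,\mathrm{II}(X,Y)$, where $\mathrm{II}(\cdot,\cdot) = \langle\nabla_{(\cdot)}\mu,\cdot\rangle$ is the second fundamental form of $\partial\Omega$; in particular $\Delta f = \hess f(\mu,\mu) + aH$. Substituting $df(\mu)=a$, $\hess f(df^\sharp,\mu)=a\,\hess f(\mu,\mu)$, $dV(\mu)=\mu(V)$ and $\langle df,dV\rangle = a\,\mu(V)$ into the explicit formula for $\Theta$, the $\hess f(\mu,\mu)$-terms cancel and the $\mu(V)$-terms cancel, leaving $\Theta(\mu)|_{\partial\Omega} = V^3|df|^2 H$, hence
\[
\omega(\mu)\big|_{\partial\Omega} = V H\,\frac{V^2|df|^2}{1+V^2|df|^2}.
\]
Inserting this into the previous identity produces exactly the inner boundary integral of \eqref{eqIntegrCourbScal}. (If $f$ is not $C^2$ up to $\partial\Omega$, one first applies \eqref{eqScalCurvDivg} on the region exterior to a nearby regular level set of $f$, where the same computation applies verbatim, and then lets that level set tend to $\partial\Omega$, using the continuity of $f$ and $df\neq0$ there.)

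It remains to identify $\lim_{r\to\infty}\int_{S_r}\omega(\nu_r)\,d\mu^b$ with $H_\Phi(V)$, and this is the main point. The quantity $\Theta(\nu_r)$ is precisely the Chru\'sciel--Herzlich mass integrand associated with $e = V^2\,df\otimes df$ and $V = V_{(0)}$, so $\int_{S_r}\Theta(\nu_r)\,d\mu^b \to H_\Phi(V)$ by the definition of the mass functional (Proposition~2.2 of \cite{ChruscielHerzlich}). It therefore suffices to show that the correction term $\int_{S_r}\big(1-\tfrac{1}{1+V^2|df|^2}\big)\Theta(\nu_r)\,d\mu^b = \int_{S_r}\tfrac{V^2|df|^2}{1+V^2|df|^2}\,\Theta(\nu_r)\,d\mu^b$ tends to $0$. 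Since $|dV|_b = \sinh r \le V$, its integrand is dominated, up to a constant, by $V^5\big(|df|^4 + |df|^3|\hess f|\big)$, which by the Sobolev-type strengthening of \eqref{decay1} recorded in Section~\ref{secAHgraphs}, together with H\"older's inequality, is integrable over $\bH^n\setminus B$ against $d\mu^b$; the co-area formula then forces its integral over $S_r$ to tend to $0$ along some sequence $r_k\to\infty$. As the other two terms in the displayed identity already have limits, the limit of $\int_{S_r}\omega(\nu_r)\,d\mu^b$ exists and hence equals $H_\Phi(V)$; passing to the limit gives \eqref{eqIntegrCourbScal}. The genuine obstacle is this asymptotic estimate --- making the correction vanish at infinity and so tying $\omega$ to the Chru\'sciel--Herzlich mass functional; the divergence theorem and the two boundary computations are otherwise routine.
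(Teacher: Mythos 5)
Your proposal is correct and follows essentially the same route as the paper: integrate the divergence identity \eqref{eqScalCurvDivg} over $(\bH^n\setminus\Omega)\cap B_r$, evaluate the inner boundary term using that $f$ is locally constant on $\partial\Omega$ (your cancellation via $\Theta(\mu)=V^3|df|^2H$ is the same computation the paper performs through the decomposition $\Delta f=\Delta^{\partial\Omega}f+\hess f(\nu,\nu)+H\,df(\nu)$), and identify the outer limit with $H_\Phi(V)$. The only cosmetic difference is at the outer boundary, where the paper simply invokes the uniform decay \eqref{decay3} to replace $\tfrac{1}{1+V^2|df|^2}$ by $1$, whereas you argue via integrability of the correction term, the co-area formula, and the existence of the other two limits --- an acceptable, if anything slightly more careful, justification of the same step.
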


\begin{proof}
Let $\nu$ denote the outgoing unit normal to $\partial \Omega$ and let
$\nu_r = \partial_r$ be the normal to the spheres of constant $r$. 
From Formula \eqref{eqScalCurvDivg} we get 
\[\begin{split}
&\int_{\bH^n \setminus \Omega} V \left(\scal + n(n-1)\right) \, d\mu^b \\
&\qquad
= \lim_{r \to \infty} \int_{B_r(0) \setminus \Omega} 
V \left(\scal + n(n-1)\right) \, d\mu^b \\
&\qquad 
= \lim_{r \to \infty} \int_{S_r(0)} \frac{1}{1+V^2|df|^2} 
\left(V \divg^b e - V d \tr^b e 
- e(\nabla V, \cdot) + (\tr^b e) dV\right)(\nu_r) \, d\mu^b\\
&\qquad \qquad
- \sum_i \int_{\partial \Omega} \frac{1}{1+V^2|df|^2} 
\left(V \divg^b e - V d \tr^b e - e(\nabla V, \cdot) 
+ (\tr^b e) dV\right)(\nu) \, d\mu^b\\
&\qquad 
= H_\Phi(V) - \sum_i \int_{\partial \Omega} \frac{1}{1+V^2|df|^2} 
\left(V \divg^b e - V d \tr^b e - e(\nabla V, \cdot) 
+ (\tr^b e) dV\right)(\nu) \, d\mu^b.
\end{split}\]
Here we used that $e = V^2 df \otimes df$ satisfies \eqref{decay3} 
to replace the factor $\frac{1}{1+V^2|df|^2}$ by $1$ in the limit of 
the outer boundary integral. We next compute the integral over 
$\partial \Omega$. We will do the calculations assuming that 
$\nu = \frac{\nabla f}{|\nabla f|}$, the case 
$\nu = - \frac{\nabla f}{|\nabla f|}$ is similar. 
The last two terms are
\[ 
- e(\nabla V, \nu) + (\tr^b e) dV(\nu)
= - V^2 \<df, dV\> \<df, \nu\> + V^2 |df|^2 \<dV, \nu\>
= 0,
\]
and the first two give
\[ \begin{split}
V \divg^b e(\nu) - V d \tr^b e (\nu)
&= 
2 V^2 \<df, dV\> df(\nu) + V^3 (\Delta f) df(\nu) 
+ V^3 \hess f(\nabla f, \nu) \\
&\qquad 
- 2 V^2 |df|^2 dV(\nu) - 2 V^3 \hess f (\nabla f, \nu) \\
&= 
V^3 (\Delta f) df(\nu) - V^3 \hess f(\nabla f, \nu).
\end{split} \]
We next use the following formula for the Laplacian of $f$ on
$\partial \Omega$, 
\[
\Delta f 
= \Delta^{\partial \Omega} f + \hess f(\nu, \nu) + H df(\nu).
\]
Since $f$ is locally constant on $\partial \Omega$ we obtain 
\[
V \divg^b e(\nu) - V d \tr^b e (\nu) 
= 
V^3 H df(\nu)^2 
= 
V^3 H |df|^2.
\] 
Hence,
\[
\int_{\bH^n \setminus \Omega} V \left(\scal + n(n-1)\right) \, d\mu^b 
= 
H_\Phi(V) - \sum_i \int_{\partial \Omega} 
V H \frac{V^2 |df|^2}{1+V^2|df|^2} \, d\mu^b.
\]
It then suffices to note that $d\mu^g = \sqrt{1 + V^2 |df|^2} d\mu^b$ 
to prove Formula \eqref{eqIntegrCourbScal}.
\end{proof}

\section{Penrose type inequalities}
\label{secPenrose}

\subsection{Horizon boundary}

From now on we assume that $|df| \to \infty$ at $\partial \Omega$, it 
then follows that the boundary is a minimal hypersurface, or a horizon. 
This can be seen by taking the double over the boundary of the graph of 
$f$. The double is then a $C^1$ Riemannian manifold for which the 
original boundary is the fixed point set of a reflection, and thus the 
boundary is minimal. It is not hard to prove that there can be no other 
minimal surface in the graph which encloses $\partial \Omega$.

From Lemma~\ref{lmIntegrCourbScal} we conclude the following 
proposition.

\begin{proposition}
\label{prop_penrose_graph}
Let $\Omega \subset \bH^n$ be a relatively compact open subset of
$\bH^n$ with smooth boundary. Let $f : \bH^n \setminus \Omega \to \bR$
be an asymptotically hyperbolic function which is locally constant on 
$\partial \Omega$ and such that $|df| \to \infty$ at $\partial \Omega$. 
Further assume that $\scal \geq -n(n-1)$. Then 
\begin{equation} \label{penrose_general_boundary}
H_\Phi(V) 
\geq
\int_{\partial \Omega} V H \, d\mu^b.
\end{equation}
\end{proposition}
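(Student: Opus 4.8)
The plan is to invoke Lemma~\ref{lmIntegrCourbScal} directly. Under the hypotheses of the proposition, $f$ is an asymptotically hyperbolic function which is locally constant on $\partial\Omega$, and since $|df|\to\infty$ at $\partial\Omega$ we certainly have $df\neq 0$ at every point of $\partial\Omega$ in a neighborhood thereof; hence the hypotheses of Lemma~\ref{lmIntegrCourbScal} are satisfied and formula \eqref{eqIntegrCourbScal} holds:
\[
H_\Phi(V)
=
\int_{\bH^n \setminus \Omega}
\frac{V [\scal + n(n-1)]}{\sqrt{1 + V^2 |df|^2}} \, d\mu^g
+
\int_{\partial \Omega} H V \frac{V^2 |df|^2}{1+V^2 |df|^2} \, d\mu^b.
\]

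Next I would dispose of the two terms on the right-hand side. For the bulk term, the integrand is $\frac{V[\scal+n(n-1)]}{\sqrt{1+V^2|df|^2}}$; since $V = \cosh r > 0$, $\sqrt{1+V^2|df|^2} > 0$, and the curvature assumption $\scal \geq -n(n-1)$ gives $\scal + n(n-1) \geq 0$, this integrand is nonnegative, so the bulk integral is $\geq 0$ and can be dropped from the lower bound. For the boundary term, the key point is that $|df| \to \infty$ as one approaches $\partial\Omega$, so the factor $\frac{V^2|df|^2}{1+V^2|df|^2} \to 1$ pointwise on $\partial\Omega$; since this factor is bounded (between $0$ and $1$) and $H$, $V$ are continuous (hence bounded) on the compact hypersurface $\partial\Omega$, the boundary integral is a limit of boundary integrals over the level sets $\{f = c\}$ as $c$ approaches the boundary value, and equals $\int_{\partial\Omega} HV\,d\mu^b$ in the limit. (More carefully, one writes the boundary term in \eqref{eqIntegrCourbScal} as the limit over level sets $\{|df| = R\}$ as $R \to \infty$, on which $\frac{V^2|df|^2}{1+V^2|df|^2} = \frac{V^2 R^2}{1+V^2R^2} \to 1$ uniformly since $V$ is bounded on the relevant region; the areas of these level sets converge to $|\partial\Omega|_b$ because $f$ is locally constant on $\partial\Omega$.)

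Combining these two observations with the identity above yields
\[
H_\Phi(V)
\geq
\int_{\partial \Omega} V H \, d\mu^b,
\]
which is \eqref{penrose_general_boundary}. The only genuinely delicate point is the justification that the boundary term of \eqref{eqIntegrCourbScal} actually equals $\int_{\partial\Omega} VH\,d\mu^b$ rather than merely being bounded below by it; this requires the observation recorded after Theorem~\ref{THM-Main} that, because $f$ is locally constant on $\partial\Omega$, the $b$-area and the $g$-induced area of $\partial\Omega$ coincide, together with the dominated convergence argument on the level sets sketched above. Everything else is an immediate consequence of positivity of the dropped bulk term.
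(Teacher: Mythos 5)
Your proposal is correct and follows essentially the same route as the paper, which derives the Proposition directly from Lemma~\ref{lmIntegrCourbScal} by dropping the nonnegative bulk term (using $\scal + n(n-1) \geq 0$) and observing that $\frac{V^2|df|^2}{1+V^2|df|^2} \to 1$ as $|df| \to \infty$ at $\partial\Omega$. In fact the paper states this without further justification, so your care in noting that the Lemma cannot be applied verbatim when $|df|$ blows up at the boundary, and in supplying the exhaustion-by-level-sets limiting argument, only adds detail that the paper leaves implicit.
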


Applying the Hoffman-Spruck inequality or the Minkowski formula we get 
estimates of the boundary term in \eqref{penrose_general_boundary} and 
conclude the following Theorem.

\begin{theorem} 
\label{THM-AH-Penrose-Graph-HS}
Let $\Omega \subset \bH^n$ be a relatively compact open subset of
$\bH^n$ with smooth boundary. Assume that $\Omega$ contains an inner ball 
centered at the origin of radius $r_0$. 
Let $f : \bH^n \setminus \Omega \to \bR$ be an asymptotically 
hyperbolic function which is locally constant on $\partial \Omega$ and 
such that $|df| \to \infty$ at $\partial \Omega$. Further assume that 
$\scal \geq -n(n-1)$ and that $\partial \Omega$ has non-negative mean 
curvature $H \geq 0$. Then 
\begin{equation} \label{AH-Penrose-HS}
H_\Phi(V) \geq 
\frac{n-2}{2^{n-1} n^{\frac{n}{n-1}}} V(r_0) \omega_{n-1}
\left( \frac{|\partial \Omega|}{\omega_{n-1}} \right)^{\frac{n-2}{n-1}}
\end{equation}
and
\begin{equation} \label{AH-Penrose-M}
H_\Phi(V) \geq (n-1) V(r_0) |\partial \Omega|.
\end{equation}
\end{theorem}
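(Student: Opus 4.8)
The plan is to take Proposition~\ref{prop_penrose_graph} as the starting point, so that both inequalities reduce to bounding the boundary integral $\int_{\partial\Omega} VH\,d\mu^b$ from below, and then to extract the two stated bounds from two classical integral inequalities for the closed hypersurface $\partial\Omega\subset\bH^n$. First I would observe that, since $\Omega$ contains the ball $B_{r_0}(0)$, we have $r\geq r_0$ everywhere on $\bH^n\setminus\Omega$, hence $V=\cosh r\geq\cosh r_0=V(r_0)$ on $\partial\Omega$; together with the hypothesis $H\geq 0$ this already gives
\[
H_\Phi(V)\ \geq\ \int_{\partial\Omega} VH\,d\mu^b\ \geq\ V(r_0)\int_{\partial\Omega} H\,d\mu^b .
\]

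To prove \eqref{AH-Penrose-HS} I would then invoke the Hoffman--Spruck Sobolev inequality for submanifolds. Since $\bH^n$ is a Cartan--Hadamard manifold, its sectional curvature is non-positive, so the auxiliary hypotheses appearing in the general statement (a smallness condition on the volume of the support, a lower bound on the injectivity radius) are automatically satisfied; applying the inequality on $\partial\Omega$ with the test function identically equal to $1$ (so the gradient term vanishes) should yield
\[
|\partial\Omega|^{\frac{n-2}{n-1}}\ \leq\ \frac{2^{n-1}\, n^{\frac{n}{n-1}}}{(n-2)\,\omega_{n-1}^{1/(n-1)}}\int_{\partial\Omega}|H|\,d\mu^b .
\]
Because $H\geq 0$ the absolute value can be dropped; solving for $\int_{\partial\Omega} H\,d\mu^b$, inserting into the previous display, and using the identity $\omega_{n-1}^{1/(n-1)}|\partial\Omega|^{\frac{n-2}{n-1}}=\omega_{n-1}\bigl(|\partial\Omega|/\omega_{n-1}\bigr)^{\frac{n-2}{n-1}}$ would reproduce \eqref{AH-Penrose-HS} verbatim.

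For \eqref{AH-Penrose-M} I would instead use the Minkowski integral formula in $\bH^n$. The vector field $X=\nabla^b V=\sinh r\,\partial_r$ satisfies $\nabla^b_Y X=V\,Y$ for every $Y$, because $\hess^b V=Vb$ (that is, $V\in\cN$); hence its tangential divergence along any hypersurface equals $(n-1)V$, and the divergence theorem on the closed manifold $\partial\Omega$ gives
\[
(n-1)\int_{\partial\Omega} V\,d\mu^b\ =\ \int_{\partial\Omega} H\,\langle X,\nu\rangle\,d\mu^b ,
\]
with $\nu$ the outward unit normal. Since $\langle X,\nu\rangle\leq|X|=\sinh r\leq\cosh r=V$ and $H\geq 0$, the right-hand side is at most $\int_{\partial\Omega} VH\,d\mu^b$, so that $\int_{\partial\Omega} VH\,d\mu^b\geq(n-1)\int_{\partial\Omega} V\,d\mu^b\geq(n-1)V(r_0)|\partial\Omega|$, which combined with Proposition~\ref{prop_penrose_graph} is \eqref{AH-Penrose-M}.

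I expect the main obstacle to be a bookkeeping one: pinning down the exact form of the Hoffman--Spruck constant as it appears in the literature and verifying that the non-positivity of the curvature of $\bH^n$ genuinely removes the side conditions of their theorem. Everything else is elementary — the pointwise bounds $V\geq V(r_0)$ and $\sinh r\leq\cosh r$, the tangential divergence theorem, and tracking the powers of $\omega_{n-1}$. Note in particular that the Minkowski-formula argument requires no convexity or star-shapedness of $\Omega$, only $H\geq 0$.
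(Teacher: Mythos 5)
Your proposal is correct and follows essentially the same route as the paper: reduce to the boundary integral via Proposition~\ref{prop_penrose_graph}, use $V\geq V(r_0)$ on $\partial\Omega$ from the inner-ball hypothesis, apply Hoffman--Spruck with $h\equiv 1$ for \eqref{AH-Penrose-HS}, and use the hyperbolic Minkowski formula $(n-1)\int_{\partial\Omega}V\,d\mu^b=\int_{\partial\Omega}H\langle\nabla V,\nu\rangle\,d\mu^b$ together with $\langle\nabla V,\nu\rangle\leq\sinh r\leq V$ for \eqref{AH-Penrose-M}. The only difference is cosmetic: the paper simply cites Montiel--Ros for the Minkowski formula, whereas you derive it from the fact that $\nabla^b V$ is a conformal Killing field, which is exactly what that citation amounts to.
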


\begin{proof}
The Hoffman-Spruck inequality,
\cite{hoffman_spruck_74, otsuki_75, tanno_83}, applied to a compact
hypersurface $M$ of hyperbolic space $\bH^n$ tells us that 
\begin{equation} \label{hoffman-spruck}
\left( \int_M h^{\frac{n-1}{n-2}} \, d\mu^b \right)^{\frac{n-2}{n-1}}
\leq 
C_n \int_M \left( |d h| + h |H| \right) \, d\mu^b
\end{equation}
for any smooth non-negative function $h$ on $M$. Here
\[
C_n =
2^{n-1} \frac{n}{n-2} \left(\frac{n}{\omega_{n-1}} \right)^{\frac{1}{n-1}}. 
\]
Setting $h \equiv 1$ and $M=\partial \Omega$ in \eqref{hoffman-spruck} 
yields \eqref{AH-Penrose-HS}.

The estimate \eqref{AH-Penrose-M} follows from the Minkowski formula in 
hyperbolic space, see \cite[Equation~(4')]{MontielRos} with the point 
$a = (1,0,\dots,0)$ (note that in the cited article the mean curvature 
is defined as an average and not a sum). 
\end{proof}

Neither of the inequalities \eqref{AH-Penrose-HS} and \eqref{AH-Penrose-M}
is optimal, so we do not get a characterization of the case of equality 
in the corresponding Penrose type inequalities.

\subsection{Changing to the Euclidean metric}
\label{changetoeucl}

We will now find an estimate of the boundary term in 
\eqref{penrose_general_boundary} by changing to the Euclidean metric 
$\btil \definedas b + dV \otimes dV$. In the hyperboloid model of hyperbolic 
space this transformation can be viewed as the vertical projection of 
$\bH^n$ onto $\bR^n \subset \bR^{n,1}$.

\begin{lemma}
Let $\nu$ be the outgoing unit normal to $\partial \Omega$. The second 
fundamental form of $\partial \Omega$ with respect to the
metric $\btil$ is given by
\begin{equation*}
\sffrdd{i}{j} 
= \frac{V}{\sqrt{V^2 - \<dV, \nu\>^2}}
\left(
\sffdd{i}{j} 
- \frac{\nabla^k V \nabla_k \psi}{V} b_{ij}
\right),
\end{equation*}
where $\psi$ is a defining function for $\partial \Omega$ such that $\nabla \psi = \nu$.
Further, we have
\begin{equation}\label{eqIntHV}
\begin{split}
\int_{\partial \Omega} HV \, d\mu
&= 
\int_{\partial \Omega} \Htil \, d\widetilde{\mu} 
+ (n-1) \int_{\partial \Omega} \< dV, \nu\> \, d\mu \\
&\qquad 
+ \int_{\partial \Omega} 
\frac{1}{1+|\nabla^T V|^2} 
\left(\sff(\nabla^T V, \nabla^T V)V
- |\nabla^T V|^2 \< dV, \nu\> \right) \, d\mu,
\end{split}
\end{equation}
where 
$\nabla^T V$ is the gradient of $V$ for the metric induced by $b$
on $\partial \Omega$.
\end{lemma}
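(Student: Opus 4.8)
The plan is to reduce the whole lemma to a computation of the difference tensor between the Levi-Civita connections of $b$ and $\btil = b + dV\otimes dV$, exploiting that the perturbation is rank one and that $V$ is special: $\hess^b V = Vb$, and hence (differentiating $|\nabla^b V|^2$ and using $V=1$, $\nabla V = 0$ at the origin) $|\nabla^b V|^2 = V^2-1$, so $1+|dV|^2 = V^2$. Geometrically $\btil$ is the Euclidean metric obtained by vertically projecting the hyperboloid, but we will not need that. First I would record, by the Sherman--Morrison formula, that $\btil^{ij} = b^{ij} - V^{-2}\nabla^i V\nabla^j V$, so $\btil^{ij}\nabla_j V = V^{-2}\nabla^i V$. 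Then, writing $\Gamtil^k_{ij} - \Gamma^k_{ij} = \tfrac12 \btil^{kl}\bigl(\nabla_i\btil_{jl} + \nabla_j\btil_{il} - \nabla_l\btil_{ij}\bigr)$ and substituting $\nabla_i\btil_{jl} = (\hess^b V)_{ij}\nabla_l V + \nabla_j V\,(\hess^b V)_{il} = V(b_{ij}\nabla_l V + b_{il}\nabla_j V)$, the terms proportional to $b_{il}$ and $b_{jl}$ cancel after symmetrization, leaving the clean identity $\Gamtil^k_{ij} - \Gamma^k_{ij} = V b_{ij}\,\btil^{kl}\nabla_l V = \tfrac{\nabla^k V}{V}\,b_{ij}$.

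Next I would establish the formula for $\sffrdd{i}{j}$. Choose $\psi$ to be a defining function for $\partial\Omega$ with $\nabla^b\psi = \nu$ and $|\nabla^b\psi|_b\equiv 1$ near $\partial\Omega$, for instance the signed $b$-distance. For any level set the second fundamental form is the Hessian of the defining function restricted to the tangent space divided by the length of its gradient; thus $\sffdd{i}{j} = \hess^b_{ij}\psi$ on $T(\partial\Omega)$, while $\sffrdd{i}{j} = |\nabla^{\btil}\psi|_{\btil}^{-1}\,\widetilde\nabla^2_{ij}\psi$ there. Since the two Hessians differ by the difference tensor of Step 1, $\widetilde\nabla^2_{ij}\psi = \hess^b_{ij}\psi - (\Gamtil^k_{ij}-\Gamma^k_{ij})\nabla_k\psi = \sffdd{i}{j} - \tfrac{\nabla^k V\nabla_k\psi}{V}\,b_{ij}$, and Sherman--Morrison again gives $|\nabla^{\btil}\psi|_{\btil}^2 = \btil^{ij}\nabla_i\psi\nabla_j\psi = 1 - V^{-2}\langle dV,\nu\rangle^2 = V^{-2}\bigl(V^2-\langle dV,\nu\rangle^2\bigr)$ on $\partial\Omega$. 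Combining these two identities yields the stated expression for $\sffrdd{i}{j}$.

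For \eqref{eqIntHV} I would restrict everything to $\partial\Omega$. The $\btil$-induced metric there is $\tg = g + (d^T V)\otimes(d^T V)$, so $\tg^{ij} = g^{ij} - (1+|\nabla^T V|^2)^{-1}(\nabla^T V)^i(\nabla^T V)^j$ and $d\widetilde\mu = \sqrt{1+|\nabla^T V|^2}\,d\mu$. The key bridging identity is the orthogonal decomposition $|dV|^2 = |\nabla^T V|^2 + \langle dV,\nu\rangle^2$, which combined with $|dV|^2 = V^2-1$ reads $V^2 - \langle dV,\nu\rangle^2 = 1+|\nabla^T V|^2$; this is what converts the factor $V\,(V^2-\langle dV,\nu\rangle^2)^{-1/2}$ of the previous formula into $V\,(1+|\nabla^T V|^2)^{-1/2}$. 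I would then compute $\Htil = \tg^{ij}\sffrdd{i}{j}$, expanding the four resulting products (the $g^{ij}\sffdd{i}{j}$ term gives $H$, the trace of $b_{ij}$ over $T(\partial\Omega)$ gives $n-1$, and the two contractions against $\nabla^T V$ produce $\sff(\nabla^T V,\nabla^T V)$ and $|\nabla^T V|^2$), multiply through by $d\widetilde\mu = \sqrt{1+|\nabla^T V|^2}\,d\mu$ so that every square root disappears, solve the resulting pointwise identity for $HV\,d\mu$, and integrate over $\partial\Omega$.

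I expect the main obstacle to be purely organizational: keeping track, at each step, of which metric every gradient, inner product, trace and area element refers to --- in particular distinguishing the ambient $b$-quantity $\nabla^k V\nabla_k\psi = \langle dV,\nu\rangle$ from the intrinsic $g$-gradient $\nabla^T V$ on $\partial\Omega$ --- and carrying the bridging identity $V^2-\langle dV,\nu\rangle^2 = 1+|\nabla^T V|^2$ through the expansion of $\Htil$ so that it collapses to exactly the three terms of \eqref{eqIntHV}. Apart from that bookkeeping, the expansion of $\Htil$ is the only genuinely lengthy computation; the difference-tensor identity and the two Sherman--Morrison inversions are short.
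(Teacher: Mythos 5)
Your proposal is correct and follows essentially the same route as the paper: the difference-of-Christoffel-symbols identity $\Gamtil^k_{ij}-\Gamma^k_{ij}=\frac{\nabla^k V}{V}b_{ij}$ obtained from $\hess^b V=Vb$ and $1+|dV|^2=V^2$, the formula $\sffrdd{i}{j}=|d\psi|_{\btil}^{-1}\hesstildd{i}{j}\psi$, and then the trace and the identity $V^2-\<dV,\nu\>^2=1+|\nabla^T V|^2$ to convert area elements. The only (immaterial) difference is that you take the $\btil$-trace via the Sherman--Morrison inverse of the rank-one perturbed induced metric, whereas the paper uses an adapted orthonormal frame with $e_k\in\ker dV$ for $k\geq 2$; these give the identical expansion.
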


\begin{proof}
The second fundamental form of $\partial \Omega$
with respect to the metric $\btil$ is given by
\[
\sffrdd{i}{j} = \frac{1}{|d\psi|_{\btil}} \hesstildd{i}{j} \psi.
\] 
We compute 
\[
\hesstildd{i}{j} \psi - \hessdd{i}{j} \psi 
= 
\left(\Gamma^k_{ij} - \Gamtil^k_{ij}\right) \partial_k \psi.
\]
At the center point $p_0$ of normal coordinates for the metric $b$ 
the difference between the two Christoffel symbols is given by 
\[\begin{split}
\Gamtil^k_{ij} - \Gamma^k_{ij}
&= 
\frac{1}{2} \btil^{kl} \left(\nabla_i \btil_{lj} 
+ \nabla_j \btil_{il} - \nabla_l \btil_{ij}\right)\\
&= 
\frac{1}{2} 
\left(b^{kl} - \frac{\nabla^k V \nabla^l V}{1+|dV|^2} \right) 
\left[\nabla_i (\nabla_l V\nabla_j V) + \nabla_j (\nabla_i V \nabla_l V) 
- \nabla_l (\nabla_i V\nabla_j V)\right]\\
&= 
\left(b^{kl} - \frac{\nabla^k V \nabla^l V}{1+|dV|^2} \right) 
\nabla_l V \hessdd{i}{j} V \\
&= 
\frac{\nabla^k V}{1+|dV|^2} \hessdd{i}{j} V \\
&= 
\frac{\nabla^k V}{V} b_{ij} ,
\end{split}\]
where we used that $\hess^b V = V b$ and $1 + |dV|^2 = V^2$ in 
the last line. Further, we have 
\[
|d\psi|_{\btil} 
= \sqrt{1 - \frac{\<dV, \nu\>^2}{1+|dV|^2}}
= \frac{1}{V} \sqrt{V^2 - \<dV, \nu\>^2}.
\] 
Hence,
\[\begin{split}
\sffrdd{i}{j} 
&= \frac{V}{\sqrt{V^2 - \<dV, \nu\>^2}}
\left(
\hessdd{i}{j} \psi 
- \frac{\nabla^k V \nabla_k \psi}{V} b_{ij}
\right)\\
&= \frac{V}{\sqrt{V^2 - \<dV, \nu\>^2}}
\left(
\sffdd{i}{j} 
- \frac{\nabla^k V \nabla_k \psi}{V} b_{ij}
\right).
\end{split}\]
We take the trace of this formula with respect to the metric $\btil$. 
For this we select an orthogonal basis 
$(e_1, \dots, e_{n-1})$ of $T_{p_0}\partial \Omega$ for the metric $b$ such
that $e_k \in \ker dV$ for $k \geq 2$. An orthogonal basis for the
metric $\btil$ is then given by 
\[\begin{aligned}
\widetilde{e}_1 &= \frac{1}{\sqrt{1 + (\nabla_{e_1} V)^2}} e_1 , \\
\widetilde{e}_k &= e_k \qquad \text{for } k \geq 2.
\end{aligned}\]
Thus, we find 
\[\begin{split}
\Htil
&= 
\sum_{k=1}^{n-1} \sffr(\widetilde{e}_k, \widetilde{e}_k)\\
&= 
\sum_{k=1}^{n-1} \sffr(e_k, e_k) 
- \left(1 - \frac{1}{1 + (\nabla_{e_1} V)^2}\right) \sffr(e_1, e_1)\\
&= 
\frac{V}{\sqrt{V^2 - \<dV, \nu\>^2}}
\left[
\left(H - (n-1)\frac{\< dV, d\psi\>}{V}\right)
- \frac{(\nabla_{e_1} V)^2}{1 + (\nabla_{e_1} V)^2} 
\left(\sff(e_1, e_1) - \frac{\< dV, d\psi\>}{V}\right)
\right] \\
&= 
\frac{1}{\sqrt{V^2 - \<dV, \nu\>^2}}
\left[
\left(HV - (n-1) \< dV, d\psi\> \right)
- \frac{(\nabla_{e_1} V)^2}{1 + (\nabla_{e_1} V)^2} 
\left(\sff(e_1, e_1)V - \< dV, d\psi\> \right)
\right].
\end{split}\]
Next we note that $(\nabla_{e_1} V)^2 = |dV|^2 - \<dV, \nu\>^2$ is 
the norm of $dV$ restricted to the tangent space of $\partial \Omega$. 
Hence the measure $d\widetilde{\mu}$ induced on $\partial \Omega$ by 
$\btil$ is given by 
\[
d\widetilde{\mu} 
= \sqrt{1 + |dV|^2 - \<dV, \nu\>^2} \, d\mu
= \sqrt{V^2 - \<dV, \nu\>^2} \, d\mu
\]
where $d\mu$ is the measure induced on $\partial \Omega$ by $b$.
Finally we conclude
\[\begin{split}
\int_{\partial \Omega} \Htil \, d\widetilde{\mu}
&= 
\int_{\partial \Omega} 
\left( HV - (n-1) \< dV, d\psi\> \right)
\, d\mu \\
&\qquad 
- \int_{\partial \Omega} 
\frac{|\nabla_{e_1} V|^2}{1+|\nabla_{e_1} V|^2} 
\left(\sff(e_1, e_1)V - \< dV, d\psi\> \right)
\, d\mu. 
\end{split}\]
\end{proof}

The assumption $\sffr > 0$ is equivalent to
\[
\sff > \frac{\nabla^k V \nabla_k \psi}{V} b,
\]
where this inequality is to be understood as an inequality between
quadratic forms on $T\partial\Omega$. This notion of convexity is
not invariant under the action of isometries of the hyperbolic space.
Since $|dV| < V$, it is natural to replace this assumption by 
\[
\sff \geq b.
\]
This new assumption is equivalent to the definition of h-convexity (see
for example \cite{BorisenkoMiquel}). Assuming that $\Omega$ is h-convex,
we get the following inequality from \eqref{eqIntHV}.

\begin{equation}\label{eqIntHV2}
\int_{\partial \Omega} HV \, d\mu
\geq 
\int_{\partial \Omega} \Htil \, d\widetilde{\mu} 
+ (n-1) \int_{\partial \Omega} \< dV, \nu\> \, d\mu.
\end{equation}

We estimate the first term of the right-hand side by the 
Aleksandrov-Fenchel inequality, see \cite[Theorem 2]{GuanLi},
\cite[Lemma 12]{LamGraph}, \cite{SchneiderBook} or \cite{BuragoZalgaller}.
\[
\int_{\partial \Omega} \Htil \, d\widetilde{\mu}
\geq 
(n-1) \omega_{n-1} 
\left(\frac{|\partial\Omega|_{\btil}}{\omega_{n-1}}\right)^{\frac{n-2}{n-1}}
\geq 
(n-1) \omega_{n-1} 
\left(\frac{|\partial\Omega|_b}{\omega_{n-1}}\right)^{\frac{n-2}{n-1}}.
\]
Equality in the first inequality here implies that $\partial \Omega$
is a round sphere in the Euclidean metric $\btil$, equality in the second
inequality tells us that it must be centered at the origin.

To estimate the second term of \eqref{eqIntHV2}, we rely on
\cite[Theorem 2]{BorisenkoMiquel}. Assuming that the origin is the center
of an inner ball of $\Omega$ and denoting by $l$ the distance from the 
origin, we have, for any point $p \in \partial\Omega$,
\[
\<\nu, \nabla l\>
\geq
\frac{\tanh^2 \frac{l}{2}(p) + \tau}{\tanh \frac{l}{2}(p)(1 + \tau)},
\]
where $\tau = \tanh \frac{r_0}{2}$ and $r_0$ is the radius of an inner
ball of $\Omega$. Hence, setting $t = \tanh \frac{l}{2}(p)$, we have
\[ \begin{split}
\int_{\partial \Omega} \< dV, \nu\> \, d\mu
&=
\int_{\partial \Omega} \sinh l \< \nabla l, \nu\> \, d\mu\\
&\geq
\int_{\partial \Omega} \sinh l \frac{t^2 + \tau}{t (1 + \tau)} \, d\mu\\
&=
\int_{\partial \Omega} \frac{2t}{1-t^2} \frac{t^2+\tau}{t(1+\tau)} \, d\mu\\
&=
\frac{2}{1+\tau} \int_{\partial \Omega} \frac{t^2+\tau}{1-t^2} \, d\mu\\
&\geq 
\frac{2}{1+\tau} \frac{\tau^2+\tau}{1-\tau^2} |\partial \Omega|_b\\
&\geq 
\sinh r_0 |\partial \Omega|_b.
\end{split} \]
It is also easy to check that the equality
\[
\int_{\partial \Omega} \< dV, \nu\> \, d\mu=\sinh r_0 |\partial \Omega|_b
\]
holds if and only if $\Omega$ is the ball of radius $r_0$ centered at the origin.

Combining the last two estimates, we get the following inequality:
\begin{equation}\label{eqIntHV3}
\int_{\partial \Omega} HV \, d\mu
\geq 
(n-1) \omega_{n-1} \left[
\left(\frac{|\partial\Omega|_b}{\omega_{n-1}}\right)^{\frac{n-2}{n-1}}
+ \sinh r_0 \frac{|\partial\Omega|_b}{\omega_{n-1}}
\right].
\end{equation}

From Proposition \ref{prop_penrose_graph} and Inequality \eqref{eqIntHV3},
we immediately get the following theorem.

\begin{theorem} \label{THM-AH-Penrose-Graph}
Let $\Omega$ be a non-empty h-convex subset of $\bH^n$ admitting an 
inner ball centered at the origin of radius $r_0$. Let 
$f: \bH^n\setminus\Omega \to \bR$ be an asymptotically hyperbolic 
function such that $f$ is locally constant on $\partial \Omega$,
$|df| \to \infty$ at $\partial \Omega$. Assume that the scalar 
curvature $\scal$ of its graph is larger than $-n(n-1)$. Then
\begin{equation} \label{AH-Penrose-Graph}
H_\Phi(V) \geq 
(n-1) \omega_{n-1} 
\left[
\left( \frac{|\partial \Omega|}{\omega_{n-1}} \right)^{\frac{n-2}{n-1}} 
+ \sinh r_0 \frac{|\partial \Omega|}{\omega_{n-1}}\right].
\end{equation}
Moreover, equality holds in \eqref{AH-Penrose-Graph} if and only if $\scal = -n(n-1)$ and 
$\partial \Omega$ is round sphere centered at the origin.
\end{theorem}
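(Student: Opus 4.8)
The inequality \eqref{AH-Penrose-Graph} is immediate from what precedes: Proposition~\ref{prop_penrose_graph} gives $H_\Phi(V)\geq\int_{\partial\Omega}HV\,d\mu^b$, and \eqref{eqIntHV3} bounds the right-hand side below by $(n-1)\omega_{n-1}\bigl[(|\partial\Omega|/\omega_{n-1})^{(n-2)/(n-1)}+\sinh r_0\,|\partial\Omega|/\omega_{n-1}\bigr]$. So the real content is the equality case, and the plan is to show that equality in \eqref{AH-Penrose-Graph} forces equality in every link of this chain, and then to read off the geometry.

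For the bulk term: by Lemma~\ref{lmIntegrCourbScal}, together with the fact that $|df|\to\infty$ at $\partial\Omega$ makes the boundary weight $\frac{V^2|df|^2}{1+V^2|df|^2}$ tend to $1$, equality in $H_\Phi(V)\geq\int_{\partial\Omega}HV\,d\mu^b$ is equivalent to the vanishing of $\int_{\bH^n\setminus\Omega}\frac{V[\scal+n(n-1)]}{\sqrt{1+V^2|df|^2}}\,d\mu^g$; since the integrand is continuous and nonnegative ($\scal\geq-n(n-1)$, $V>0$), this forces $\scal\equiv-n(n-1)$ on the graph.

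For the boundary term: the estimate $\int_{\partial\Omega}HV\,d\mu^b\geq(n-1)\omega_{n-1}[\cdots]$ passes through the identity \eqref{eqIntHV} followed by three inequalities, all of which must then be saturated. (a) Going from \eqref{eqIntHV} to \eqref{eqIntHV2} one drops $\int_{\partial\Omega}\frac{1}{1+|\nabla^T V|^2}\bigl(\sff(\nabla^T V,\nabla^T V)V-|\nabla^T V|^2\langle dV,\nu\rangle\bigr)\,d\mu$; by h-convexity $\sff(\nabla^T V,\nabla^T V)\geq|\nabla^T V|^2$ and $\langle dV,\nu\rangle\leq|dV|<V$, so this integrand is nonnegative and strictly positive wherever $\nabla^T V\neq 0$, hence its vanishing forces $V=\cosh r$ to be constant along $\partial\Omega$, i.e.\ $\partial\Omega$ is a geodesic sphere centered at the origin. (b) In the Alexandrov--Fenchel step one uses $\int_{\partial\Omega}\Htil\,d\widetilde\mu\geq(n-1)\omega_{n-1}(|\partial\Omega|_{\btil}/\omega_{n-1})^{(n-2)/(n-1)}\geq(n-1)\omega_{n-1}(|\partial\Omega|_b/\omega_{n-1})^{(n-2)/(n-1)}$; equality in the first forces $\partial\Omega$ to be a round sphere in the Euclidean metric $\btil$, and equality in the second---which, since $d\widetilde\mu=\sqrt{V^2-\langle dV,\nu\rangle^2}\,d\mu\geq\sqrt{V^2-|dV|^2}\,d\mu=d\mu$ (recall $|dV|^2=V^2-1$), again amounts to $V$ constant on $\partial\Omega$---forces it to be centered at the origin, consistently with (a). (c) In the Borisenko--Miquel step \cite[Theorem 2]{BorisenkoMiquel}, with $t=\tanh\tfrac l2$, $\tau=\tanh\tfrac{r_0}2$, and $l\geq r_0$ on $\partial\Omega$, equality in $\frac{t^2+\tau}{1-t^2}\geq\frac{\tau^2+\tau}{1-\tau^2}$ (the map being strictly increasing) forces $l\equiv r_0$ on $\partial\Omega$, so $\Omega=\overline{B_{r_0}(0)}$. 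Combining (a)--(c): equality in \eqref{AH-Penrose-Graph} implies $\scal\equiv-n(n-1)$ and $\partial\Omega$ is the geodesic sphere of radius $r_0$ about the origin.

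The converse is a direct check. If $\scal\equiv-n(n-1)$ the bulk term in \eqref{eqIntegrCourbScal} vanishes; and if $\partial\Omega$ is the geodesic sphere of radius $r_0$ about the origin then $V$ is constant along $\partial\Omega$ (so the last integral in \eqref{eqIntHV} vanishes), $\partial\Omega$ is the Euclidean sphere of radius $\sinh r_0$ in $\btil$ (so Alexandrov--Fenchel is an equality), and $\langle dV,\nu\rangle\equiv\sinh r_0$ (so the Borisenko--Miquel estimate is an equality); adding up reproduces exactly the right-hand side of \eqref{AH-Penrose-Graph}. The main obstacle is purely one of book-keeping in the forward direction: one must check that the equality conditions from (a), (b) and (c) are mutually consistent and jointly cut out the geodesic sphere of radius $r_0$ centered at the origin, and that no defect measure is lost in the limit identifying the boundary term of Lemma~\ref{lmIntegrCourbScal} with $\int_{\partial\Omega}HV\,d\mu^b$---which is exactly where the hypothesis $|df|\to\infty$ at $\partial\Omega$ is used.
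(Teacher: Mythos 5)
Your proposal is correct and follows essentially the same route as the paper: the inequality is exactly the concatenation of Proposition~\ref{prop_penrose_graph} with \eqref{eqIntHV3}, and the equality analysis you give (vanishing of the nonnegative bulk term forcing $\scal\equiv -n(n-1)$, plus the equality cases of the dropped h-convexity term, of Alexandrov--Fenchel, and of the Borisenko--Miquel estimate, each forcing $\partial\Omega$ to be a geodesic sphere of radius $r_0$ centered at the origin) is precisely what the paper asserts in the remarks preceding the theorem, only written out in more detail than the paper itself provides.
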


We make a couple of remarks concerning this theorem.

\begin{remark}~
\begin{enumerate}
\item 
If $\Omega$ is a ball of radius $r$ then $r_0 = r$ and
\[|\partial\Omega| = \omega_{n-1} \sinh^{n-1} r_0,\] so 
\eqref{AH-Penrose-Graph} coincides with the standard Penrose 
inequality \eqref{AH-Penrose} in this case.
\item 
The second term of \eqref{eqIntHV2} can be written as follows,
\[
\int_{\partial\Omega} \<dV, \nu\>\, d\mu
= \int_{\Omega} \Delta V\, d\mu
= n \int_{\Omega} V\, d\mu.
\]
Thus this term may be thought of as a volume integral. Compare with
\cite{Schwartz}. Let $V_p \definedas \cosh d_b(p, \cdot)$. Changing
the origin $p$ of hyperbolic space leads to considering the function 
\[
p \mapsto \int_\Omega V_p \, d\mu.
\] 
It is fairly straightforward to see that this function is proper and 
strictly convex. So there exists a unique point $p_0$ such that, 
choosing $p_0$ as the origin, this integral is minimal. Obviously, 
$p_0 \in \Omega$. From symmetry considerations this point can be seen 
to coincide with the center of an inner ball for many $\Omega$'s. 
\item 
It follows from the previous remark, that it is possible to prove a 
Penrose inequality when $\Omega$ has several (h-convex) components 
assuming for example that if one component contains the origin then 
it is the center of one of its inner balls. For each of the other 
components, simply remark that translating them using an isometry of 
the hyperbolic space so that the origin becomes the center of one of 
its inner balls makes the integral $\int H V \, d\mu$ smaller. Hence 
we get the following inequality.
\[
H_\Phi(V) \geq 
(n-1) \omega_{n-1} 
\sum_i \left[
\left( \frac{|\partial \Omega_i|}{\omega_{n-1}} \right)^{\frac{n-2}{n-1}} 
+ \sinh r_i \frac{|\partial \Omega_i|}{\omega_{n-1}}\right],
\]
where $\Omega_i$ are the connected components of 
$\Omega$ and $r_i$ is the inner radius of $\Omega_i$.
\end{enumerate}
\end{remark}

\section{Rigidity}
\label{secRigidity}

In this section we will prove the rigidity statement concluding Theorem 
\ref{THM-Main}. The scheme of the proof we give differs very little from 
\cite{HuangWu2}. As a first step, we prove the following proposition which 
is similar to \cite[Theorem 3]{HuangWu2}.

\begin{proposition}\label{propMeanCurv}
Let $f : \bH^n \setminus \overline{\Omega}\rightarrow \bR$ be a function 
satisfying the assumptions of Theorem \ref{THM-Main} and let $\Sigma$ be
its graph. Assume further that $\Omega$ is convex. Then the mean curvature
$\Hbar$ of $\Sigma$ does not change sign.
\end{proposition}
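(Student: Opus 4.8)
The plan is to study the mean curvature $\Hbar$ directly from the formula obtained in Section~\ref{subsection_comp_scal}, namely
\[
\Hbar = \frac{1}{\left|\nablabar F\right|}\left[\Delta f - \frac{V^2 \<\hess f, df\otimes df\>}{1+V^2|df|^2} + \left(1+\frac{1}{1+V^2|df|^2}\right)\left\<df,\frac{dV}{V}\right\>\right],
\]
and to argue that the function $\Hbar$, which is continuous on the (complete, noncompact) graph $\Sigma$ with $\binf\Sigma$ and the interior horizon boundary $\partial\Omega$, cannot take both strictly positive and strictly negative values. Since $\Sigma$ is connected (it is a graph over the connected set $\bH^n\setminus\Omega$, noting that $\Omega$ convex implies $\bH^n\setminus\Omega$ is connected), it suffices to rule out an interior zero of $\Hbar$ at which $\Hbar$ changes sign. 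The idea, following \cite[Theorem 3]{HuangWu2}, is to argue by contradiction: if $\Hbar$ changes sign, then because $\Hbar\to 0$ at infinity (from the decay assumptions \eqref{decay1}--\eqref{decay3}, which force $\Delta f$, $\hess f$, $df$ all to decay appropriately) and $\Hbar = 0$ on $\partial\Omega$ (the horizon condition $H^g=0$), the function $\Hbar$ attains a positive maximum or a negative minimum at some interior point $p_0$ of $\Sigma$.

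At such an extremal point I would like to derive a contradiction using the strong maximum principle. The key is that $\Sigma$ is a hypersurface in $\bH^{n+1}$ whose ambient space has Ricci curvature bounded below (indeed $\riembar$ is constant), so the mean curvature function of $\Sigma$ satisfies, along $\Sigma$, an elliptic equation of the form coming from the Simons-type identity / the first variation of $\Hbar$: roughly $\Delta_\Sigma \Hbar = |\sffb|^2 \Hbar + \ric^{\bbar}(\nu,\nu)\Hbar + (\text{lower order})$, where $\ric^{\bbar}(\nu,\nu) = -n$. More precisely, the mean curvature flow starting from $\Sigma$ has the property that $\partial_t \Hbar = \Delta_\Sigma \Hbar + (|\sffb|^2 + \ric^{\bbar}(\nu,\nu))\Hbar$, and since $\Sigma$ is a fixed (not evolving) hypersurface here one instead uses that $\Hbar$ is a solution of the corresponding static linear elliptic equation only in special situations — so the cleaner route, and the one used in \cite{HuangWu2}, is to view $\Sigma$ as sitting inside $\bH^{n+1}$ and translate it vertically. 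Concretely, the vertical translations $(x,s)\mapsto(x,s+c)$ are isometries of $\bbar = b + V^2\,ds\otimes ds$, so translating $\Sigma$ downward produces a family $\Sigma_c$ of graphs of $f-c$; comparing $\Sigma$ with a translate that first touches it from one side at $p_0$ and applying the geometric maximum principle for the mean curvature (two hypersurfaces tangent at a point with one on one side must satisfy $\Hbar_1 \le \Hbar_2$ there, with equality of the hypersurfaces if $\Hbar_1=\Hbar_2$) yields that $\Sigma$ is itself invariant under a vertical translation, forcing $f$ to be constant, contradicting $|df|\to\infty$ on $\partial\Omega$.

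The main obstacle I expect is the behavior at the two "ends": one must justify carefully that $\Hbar\to 0$ at infinity and that the supremum/infimum of $\Hbar$ is actually attained at an interior point rather than escaping to infinity or to the boundary. For the boundary $\partial\Omega$, the horizon condition gives $\Hbar = 0$ there, but one must check the sign of the normal derivative of $\Hbar$ (Hopf lemma) to ensure $p_0\notin\partial\Omega$; this uses that $\partial\Omega$ is a nondegenerate minimal boundary (the doubling argument in Section~\ref{secPenrose} shows $\partial\Omega$ is a smooth minimal hypersurface of the doubled manifold). At infinity, the decay \eqref{decay1}--\eqref{decay3} gives $V^2|df|^2\to 0$ but one needs the $\hess f$ and $\Delta f$ terms in the $\Hbar$ formula to decay as well; here the stronger integrability hypothesis stated after \eqref{decay3} (that $df$ lies in a weighted Sobolev space with $\cosh^5 r$ weight) combined with elliptic estimates should give pointwise decay $\Hbar\to 0$. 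Once these two end analyses are in place, the interior argument via vertical translation and the maximum principle is essentially that of \cite{HuangWu2}, so I would present the end estimates in detail and refer to \cite{HuangWu2} for the translation/maximum-principle step.
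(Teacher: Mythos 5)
Your proposal rests on three premises, each of which fails. First, the horizon condition $H^g=0$ in Theorem \ref{THM-Main} is a statement about the mean curvature of $\partial\Omega$ as a hypersurface \emph{inside the graph} $\Sigma$, not about the mean curvature $\Hbar$ of $\Sigma$ in $\bH^{n+1}$; these are different objects, and indeed for the anti-de Sitter Schwarzschild graph one has $\Hbar=\frac{n}{2}\sqrt{2m}\,\rho^{-n/2}>0$ at the horizon. So the boundary datum "$\Hbar=0$ on $\partial\Omega$" that your maximum-principle argument needs is simply false. Second, the mean curvature of a fixed hypersurface does not satisfy an elliptic equation (the Simons-type identity you quote governs $\Hbar$ along a mean curvature \emph{flow}, not on a static $\Sigma$), so there is no PDE to which you can apply the strong maximum principle for $\Hbar$; you acknowledge this, but your substitute fails too: a vertical translate of a graph is the graph of $f\pm c$, which for $c\neq 0$ never touches the graph of $f$ at all (they differ by exactly $c$ over every base point), so the "first touching point'' in the sliding argument does not exist. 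Touching-point comparisons of this kind are used in the paper only in the rigidity step (Theorem \ref{thm_rigidity}), and there the comparison is against the explicit AdS-Schwarzschild graph, not against translates of $\Sigma$ itself.

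The paper's actual proof is structurally different and you would need its main ingredients. The ellipticity that is exploited is that of the quasilinear mean curvature operator $f\mapsto\Hbar(Df,D^2f)$ acting on the \emph{function} $f$, so the Hopf lemma and strong maximum principle are applied to $f$ (Lemma \ref{lmLocLevelSets}), never to $\Hbar$. The Gauss equation gives $\scal+n(n-1)=\Hbar^2-|\sffb|^2\geq 0$, so $\Hbar=0$ forces $\sffb=0$, whence the zero set of $\Hbar$ consists of pieces of totally geodesic hyperbolic hyperplanes (Lemma \ref{lmStructM0}). The contradiction is then obtained by a topological argument (every component of $\{\Hbar>0\}$ and $\{\Hbar<0\}$ must be unbounded, by Lemma \ref{lmLocMeanCurv} applied at a convex boundary point — this is where convexity of $\Omega$ enters — together with Proposition \ref{prop_unbounded_boundary}), combined with the algebraic inequality of Lemma \ref{lmMatrix} relating $\Hbar$ to the mean curvature of the level sets of $f$ (Proposition \ref{propLevelSet}) and a furthest-point/comparison-principle argument on those level sets. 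None of these steps is present in your outline, and the two end analyses you propose to carry out in detail (decay of $\Hbar$ at infinity, Hopf lemma for $\Hbar$ at $\partial\Omega$) are not the right quantities to control.
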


The proof of this proposition requires several preliminary results. The 
main observation is the fact that the assumption $\scal \geq -n(n-1)$ is 
equivalent to $\left|\sffb\right|^2 \leq \Hbar^2$. This follows at once 
from the Gauss equation. In particular, any point $p \in \Sigma$ such 
that $\Hbar(p) = 0$ has $\sffb(p) = 0$. We denote by $\Sigma_0$ the set 
of such points,
\[
\Sigma_0 \definedas
\{p \in \operatorname{int}(\Sigma) \mid \Hbar(p) = 0\},
\]
where $\operatorname{int}(\Sigma) = 
\Sigma \setminus (\partial \Omega \times \bR)$.

\begin{lemma} \label{lmStructM0}
Let $\Sigma'_0$ be a connected component of $\Sigma_0$. Then $\Sigma'_0$
lies in a codimension 1 hyperbolic subspace tangent to $\Sigma$ at every
point of $\Sigma'_0$.
\end{lemma}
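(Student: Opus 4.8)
The plan is to exploit the two facts already established: (i) at every point $p \in \Sigma_0$ we have $\sffb(p) = 0$, so $\Sigma$ is totally geodesic to first order there; and (ii) $\bH^{n+1}$ with the metric $\bbar = b + V^2\, ds\otimes ds$ is genuinely hyperbolic space, so its totally geodesic hypersurfaces are copies of $\bH^n$. First I would fix a connected component $\Sigma'_0$ of $\Sigma_0$ and a point $p \in \Sigma'_0$, let $P_p$ denote the unique totally geodesic hyperbolic hyperplane of $\bH^{n+1}$ tangent to $\Sigma$ at $p$ (it exists by the richness of isometries of $\bH^{n+1}$: $P_p$ is the fixed-point set of the reflection through the geodesic subspace determined by $T_p\Sigma$), and show that $\Sigma'_0 \subseteq P_p$.

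The key step is a continuity/openness argument. Let
\[
Z \definedas \{\, q \in \Sigma'_0 \mid q \in P_p \text{ and } T_q\Sigma = T_q P_p \,\}.
\]
This set is nonempty (it contains $p$) and closed in $\Sigma'_0$ by continuity of $q \mapsto (q, T_q\Sigma)$ and of the defining equations of $P_p$. The heart of the matter is showing $Z$ is open in $\Sigma'_0$. Take $q \in Z$. Since $\Sigma$ is totally geodesic at $q$ (because $q \in \Sigma_0$) and $T_q\Sigma = T_q P_p$, any geodesic of $\bH^{n+1}$ issuing from $q$ in a direction tangent to $\Sigma$ is tangent to $\Sigma$ to second order; I would use that near $q$ the component $\Sigma'_0$ is itself a smooth hypersurface (it is the zero set of $\Hbar$ where $d\Hbar$ behaves controllably — more carefully, one works with the fact that $\sffb \equiv 0$ on $\Sigma'_0$, which forces $\Sigma'_0$ to be ruled by geodesics of $\bH^{n+1}$ lying in $\Sigma$) to conclude that $\Sigma'_0$ coincides with $P_p$ in a neighbourhood of $q$. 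Concretely: for $v \in T_q\Sigma'_0$, the $\bbar$-geodesic $t \mapsto \exp^{\bbar}_q(tv)$ stays in $\Sigma$ for small $t$ because $\sffb = 0$ along $\Sigma'_0$ (second fundamental form vanishing along a set means that set is totally geodesic \emph{in} $\Sigma$, hence geodesics of $\Sigma$ through such points are geodesics of $\bH^{n+1}$), and it stays in $P_p$ because it starts tangent to $P_p$ at $q \in Z$ and $P_p$ is totally geodesic. So a neighbourhood of $q$ in $\Sigma'_0$ lies in $P_p$ with matching tangent spaces, i.e. lies in $Z$. By connectedness $Z = \Sigma'_0$, which is the assertion.

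**Main obstacle.** The delicate point is the regularity and structure of $\Sigma_0$ near a point where $\Hbar = 0$: a priori $\Sigma_0$ is only a closed subset, and to run the geodesic-ruling argument I need to know that through each $p \in \Sigma'_0$ there are enough directions $v$ with $\exp^{\bbar}_p(tv) \in \Sigma'_0$ for small $t$. The clean way around this is to argue directly that the vanishing of $\sffb$ on the whole of $\Sigma_0$ implies $\Sigma_0$ is a totally geodesic submanifold of $\Sigma$ in the following weak sense: if $\gamma$ is any geodesic of $\Sigma$ (equivalently of $\bH^{n+1}$, since $\sffb(\gamma(0)) = 0$) with $\gamma(0) \in \Sigma_0$, then $\gamma$ remains in $\Sigma_0$ as long as it remains in $\mathrm{int}(\Sigma)$ — this is because $\Hbar^2 - |\sffb|^2 = \scal + n(n-1) \geq 0$ together with $\sffb(\gamma(0)) = 0$ and a Riccati-type comparison along $\gamma$ forces $\sffb$ to stay zero (an isolated zero of a nonnegative-definite-forced quantity propagates). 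Once this propagation is in hand, the component $\Sigma'_0$ is swept out by such geodesics emanating from $p$, each of which lies in $P_p$ by total geodesy of $P_p$, giving $\Sigma'_0 \subseteq P_p$; and tangency of $P_p$ to $\Sigma$ along $\Sigma'_0$ follows since $\Sigma$ is tangent to each ruling geodesic. I expect the Riccati/propagation lemma to be the technically demanding ingredient, with everything else being bookkeeping with isometries of $\bH^{n+1}$.
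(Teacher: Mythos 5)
Your strategy diverges from the paper's, and the step you yourself flag as delicate is where the argument genuinely breaks. The claimed propagation --- that $\sffb(\gamma(0))=0$ together with the pointwise inequality $\Hbar^2-|\sffb|^2\geq 0$ forces $\sffb\equiv 0$ along a geodesic $\gamma$ of $\Sigma$ --- is false. The inequality $|\sffb|^2\leq\Hbar^2$ is purely algebraic at each point and gives no control on $\nabla\sffb$; there is no Riccati equation for the second fundamental form of a \emph{fixed} hypersurface restricted to a curve inside it (the Riccati equation governs shape operators of a family of parallel hypersurfaces, a different object). Concretely, a strictly convex rotationally symmetric graph that is flat to second order at exactly one point (the hyperbolic analogue of the graph of $r^4$ in the Euclidean picture) satisfies $\Hbar^2\geq|\sffb|^2$ everywhere, has $\Sigma_0$ equal to a single point, and no geodesic from that point stays in $\Sigma_0$. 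The lemma is still (trivially) true there, but your mechanism --- sweeping out $\Sigma'_0$ by geodesics from $p$ that remain in $\Sigma_0$ --- does not exist. A second, related gap: $\Sigma'_0$ is a priori only a closed connected subset of $\Sigma$, so $T_q\Sigma'_0$ is undefined and your openness step for $Z$ would at best control points of $\Sigma'_0$ reachable from $q$ along geodesics, which need not form a neighbourhood of $q$ in $\Sigma'_0$.

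The paper's proof sidesteps all of this and is worth internalizing: it never tries to show $\Sigma'_0$ is a nice set. For each of the functions $V_{(i)}$ (which satisfy $\hessbardd{\cdot}{\cdot}V_{(i)}=V_{(i)}\,\bbar$), one computes for $X$ tangent to $\Sigma$ at a point of $\Sigma'_0$ that
\[
\nablabar_X\bigl(dV_{(i)}(\nu)\bigr)=V_{(i)}\,\bbar(X,\nu)+dV_{(i)}(S(X))=0,
\]
using only that the shape operator $S$ vanishes \emph{at that point}. So the function $dV_{(i)}(\nu)$ on $\Sigma$ has vanishing differential on the connected set $\Sigma'_0$, and Morse's theorem on the behaviour of a function on its critical set yields that it is constant there (this citation is essential precisely because $\Sigma'_0$ is only a connected set; for merely $C^1$ data constancy on a connected critical set can fail). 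Constancy of all the $dV_{(i)}(\nu)$ means $\nu$ is a fixed vector of $\bR^{n+1,1}$ in the hyperboloid model, and since $\nu$ is orthogonal to the position vector, $\Sigma'_0$ lies in the totally geodesic hyperplane $W\cap\bH^{n+1}$ with $W=\nu^{\perp}$, which is tangent to $\Sigma$ along $\Sigma'_0$. To repair your write-up you would either need to supply a correct replacement for the propagation lemma (none is available from the scalar curvature hypothesis alone) or switch to an argument, like the paper's, that only uses the pointwise vanishing of $S$ on $\Sigma'_0$.
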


\begin{proof}
Let $V_{(0)}, \ldots, V_{(n)}$ be as in Section \ref{secAH} and let $\nu$ be
the unit normal vector field of $\Sigma$ in $\bH^{n+1}$. For any vector
$X \in T\Sigma$ at a point of $\Sigma'_0$ we have 
\[\begin{aligned}
\nablabar_X (d V_{(i)}(\nu))
&= \hessbardd{X}{\nu} V_{(i)} + dV_{(i)} (\nablabar_X \nu)\\
&= V_{(i)} \bbar(X, \nu) + dV_{(i)} (S(X))
&= 0,
\end{aligned}\]
where $S(X)$ denotes the Weingarten operator which is zero by assumption.
From \cite[Theorem 4.4]{Morse} we conclude that $dV_{(i)}(\nu)$ is
constant on $\Sigma'_0$. If we consider $\bH^{n+1}$ as the unit hyperboloid 
in Minkowski space $\bR^{n+1,1}$, then the $V_{(i)}$ are the coordinate 
functions of $\bR^{n+1,1}$ restricted to $\bH^{n+1}$ so $\nu$ is a constant 
vector in $\bR^{n+1,1}$. Further, $\nu$ is tangent to $\bH^{n+1}$ so it is 
orthogonal to the position vector in $\bR^{n+1,1}$. This means that $\nu$ 
is everywhere orthogonal to a linear subspace $W \subset \bR^{n+1,1}$. We 
conclude that $\Sigma'_0 \subset W \cap \bH^{n+1} \simeq \bH^{n}$. 
\end{proof}

The next result is taken from \cite[Proposition 2.1]{HuangWu1}.

\begin{lemma}[A matrix inequality]\label{lmMatrix}
 Let $A = (a_{ij})$ be a symmetric $n \times n$ matrix. Set
\[\begin{aligned}
\sigma_1(A) &\definedas \sum_{i=1}^n a_{ii}, \\
\sigma_1(A|k) &\definedas \left(\sum_{i=1}^n a_{ii}\right)-a_{kk},\\
\sigma_2(A) &\definedas \sum_{1 \leq i < j \leq n} 
\left(a_{ii} a_{jj} - a_{ij}^2\right).
\end{aligned}\]
Then we have 
\[\begin{aligned}
\sigma_1(A)\sigma_1(A|k) 
&= 
\sigma_2(A)+\frac{n}{2(n-1)}\sigma_1(A|k)^2\\
&\qquad 
+ \sum_{1 \leq i < j \leq n} a_{ij}^2 + \frac{1}{2(n-1)} 
\sum_{\substack{1 \leq i < j \leq n\\ i\neq k, j\neq k}} (a_{ii} - a_{jj})^2
\end{aligned}\]
for each $1\leq k \leq n$. In particular,
\[
\sigma_1(A)\sigma_1(A|k) \geq \sigma_2(A)+\frac{n}{2(n-1)}\sigma_1(A|k)^2,
\]
where equality holds if and only if $A$ is diagonal and all $a_{ii}$ are
equal for $i = 1, \ldots, n$, $i\neq k$.
\end{lemma}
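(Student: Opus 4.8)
The plan is to establish the displayed equality as a purely algebraic identity in the entries of $A$; the stated inequality and the characterization of its equality case then follow at once, since the two extra terms on the right-hand side, $\sum_{1 \le i < j \le n} a_{ij}^2$ and $\tfrac{1}{2(n-1)} \sum_{i < j,\ i,j \neq k} (a_{ii} - a_{jj})^2$, are manifestly non-negative and vanish simultaneously exactly when $A$ is diagonal and all $a_{ii}$ with $i \neq k$ are equal.

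Since relabelling the indices changes neither side, I would first assume without loss of generality that $k = n$, and abbreviate $\tau \definedas \sigma_1(A|n) = \sum_{i=1}^{n-1} a_{ii}$, so that $\sigma_1(A) = \tau + a_{nn}$. The first real step is to peel off the $n$-th row and column from $\sigma_2(A)$: isolating the pairs of the form $(i,n)$ and using $2\sum_{1 \le i < j \le n-1} a_{ii} a_{jj} = \tau^2 - \sum_{i=1}^{n-1} a_{ii}^2$ together with $\sum_{1 \le i < j \le n-1} a_{ij}^2 + \sum_{i=1}^{n-1} a_{in}^2 = \sum_{1 \le i < j \le n} a_{ij}^2$, one obtains
\[
\sigma_2(A) = \tfrac12 \tau^2 - \tfrac12 \sum_{i=1}^{n-1} a_{ii}^2 + a_{nn} \tau - \sum_{1 \le i < j \le n} a_{ij}^2 .
\]
Combined with $\sigma_1(A)\,\sigma_1(A|n) = (\tau + a_{nn})\tau$, this shows that $\sigma_1(A)\,\sigma_1(A|n) - \sigma_2(A) - \sum_{1 \le i < j \le n} a_{ij}^2$ collapses to $\tfrac12 \tau^2 + \tfrac12 \sum_{i=1}^{n-1} a_{ii}^2$.

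It then remains only to verify the elementary identity $\tfrac12 \tau^2 + \tfrac12 \sum_{i=1}^{n-1} a_{ii}^2 = \tfrac{n}{2(n-1)} \tau^2 + \tfrac{1}{2(n-1)} \sum_{1 \le i < j \le n-1} (a_{ii} - a_{jj})^2$, which after multiplying by $2(n-1)$ is the same as $\sum_{1 \le i < j \le n-1} (a_{ii} - a_{jj})^2 = (n-1)\sum_{i=1}^{n-1} a_{ii}^2 - \tau^2$; this is immediate upon expanding the squares, since each $a_{ii}^2$ occurs in exactly $n-2$ of the pairs. Putting these pieces together gives the identity, and the two inequalities and their equality conditions are read off as indicated. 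I do not expect any genuine obstacle here; the only delicate point is the index bookkeeping in the row-and-column splitting, which I would carry out explicitly, leaving the remaining algebra as routine.
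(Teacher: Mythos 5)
Your proof is correct: the splitting of $\sigma_2(A)$ along the $k$-th row and column, the collapse of $\sigma_1(A)\sigma_1(A|n)-\sigma_2(A)-\sum_{i<j}a_{ij}^2$ to $\tfrac12\tau^2+\tfrac12\sum_{i=1}^{n-1}a_{ii}^2$, and the final identity $\sum_{1\le i<j\le n-1}(a_{ii}-a_{jj})^2=(n-1)\sum_{i=1}^{n-1}a_{ii}^2-\tau^2$ all check out, and the equality case follows as you say. The paper gives no proof of its own (it cites Proposition~2.1 of Huang--Wu), and your direct algebraic verification is exactly the standard argument for this identity.
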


\begin{proposition}\label{propLevelSet}
Let $\Sigma$ and $s_0$ be given. Assume that $s_0$ is a regular value
for $f$ on $\Sigma$. Set $\Sigma(s_0) = \Sigma \cap f^{-1}(s_0)$. Let
$\nu$ be the unit normal vector field of $\Sigma$ in $\bH^{n+1}$, let $\eta$
be the unit normal vector field to $\Sigma(s_0)$ in $\bH^n \times \{s_0\}$
and let $H(s_0)$ be the mean curvature of $\Sigma(s_0)$ in
$\bH^n \times \{s_0\}$ computed with respect to $\eta$. Then
\[
\<\nu, \eta\> \Hbar H(s_0) 
\geq 
\frac{\scal+n(n-1)}{2} + \frac{n}{2(n-1)}\<\nu, \eta\>^2 H(s_0)^2.
\]
Equality holds at a point in $\Sigma(s_0)$ if and only if
\begin{itemize}
\item $\Sigma(s_0) \in \bH^n \times \{s_0\}$ is umbilic with principal
curvature $\kappa$, and 
\item $\<\nu, \eta\> \kappa$ is a principal curvature of $\Sigma$ with
multiplicity at least $(n-1)$.
\end{itemize}
\end{proposition}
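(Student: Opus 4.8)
The plan is to reduce the inequality to the algebraic inequality of Lemma~\ref{lmMatrix}, applied to the second fundamental form $\sffb$ of $\Sigma$ in $\bH^{n+1}$ read off in a frame adapted to the level set $\Sigma(s_0)$, with the distinguished index $k$ taken to be the direction normal to $\Sigma(s_0)$ inside $\Sigma$.

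First I would fix a point $p\in\Sigma(s_0)$ and choose a $g$-orthonormal basis $e_1,\dots,e_n$ of $T_p\Sigma$ with $e_1,\dots,e_{n-1}$ tangent to $\Sigma(s_0)$ and $e_n$ a unit normal of $\Sigma(s_0)$ inside $\Sigma$. Since $\Sigma(s_0)$ has codimension two in $\bH^{n+1}$, its normal bundle there is a rank-two bundle, spanned on the one hand by $\{\nu,e_n\}$ and on the other by $\{\eta,V^{-1}\partial_0\}$; in particular $\nu$ lies in the plane $\mathrm{span}(\eta,V^{-1}\partial_0)$, so that $\nu=\<\nu,\eta\>\,\eta+\<\nu,V^{-1}\partial_0\>\,V^{-1}\partial_0$. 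Next I would record that each slice $\bH^n\times\{s\}$ is totally geodesic in $(\bH^{n+1},\bbar)$: this is immediate from the Christoffel symbols computed in Section~\ref{subsection_comp_scal}, since they give $\nablabar_X\partial_0=\frac{X(V)}{V}\partial_0$ for $X$ tangent to a slice, hence $\nablabar_X(V^{-1}\partial_0)=0$. Therefore, for $X,Y$ tangent to $\Sigma(s_0)$ the second fundamental form of $\Sigma(s_0)$ in $\bH^{n+1}$ has no $V^{-1}\partial_0$-component and equals $h(X,Y)\,\eta$, where $h$ is the second fundamental form of $\Sigma(s_0)$ in $\bH^n\times\{s_0\}\cong\bH^n$, with $\tr h=H(s_0)$. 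Taking the $\nu$-component then yields
\[
\sffb(e_i,e_j)=\<\nu,\eta\>\,h(e_i,e_j),\qquad 1\le i,j\le n-1.
\]

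With $A\definedas\bigl(\sffb(e_i,e_j)\bigr)_{1\le i,j\le n}$ this gives $\sigma_1(A|n)=\<\nu,\eta\>\,H(s_0)$, while $\sigma_1(A)=\Hbar$ and $2\sigma_2(A)=\Hbar^2-|\sffb|^2=\scal+n(n-1)$ by the Gauss equation~\eqref{eqScalRen}. Applying Lemma~\ref{lmMatrix} with $k=n$ and substituting these three relations into $\sigma_1(A)\sigma_1(A|n)\ge\sigma_2(A)+\frac{n}{2(n-1)}\sigma_1(A|n)^2$ produces exactly the claimed estimate. For the rigidity statement I would in addition take $e_1,\dots,e_{n-1}$ to be principal directions of $h$, so that $A$ is automatically diagonal on $\mathrm{span}(e_1,\dots,e_{n-1})$; the equality clause of Lemma~\ref{lmMatrix} then forces $\sffb(e_i,e_n)=0$ for $i<n$ and $\<\nu,\eta\>\,h(e_i,e_i)$ to be independent of $i$. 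Because the regularity of $s_0$ gives $\eta(f)\ne 0$, hence $\eta\notin T\Sigma$ and $\<\nu,\eta\>\ne 0$, this says precisely that $\Sigma(s_0)$ is umbilic with some principal curvature $\kappa$ and that $\<\nu,\eta\>\,\kappa$ is an eigenvalue of $\sffb$ whose eigenspace contains $\mathrm{span}(e_1,\dots,e_{n-1})$, i.e.\ a principal curvature of $\Sigma$ of multiplicity at least $n-1$.

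Since the underlying computations (the Christoffel symbols and the Gauss equation) are already in place in the excerpt, the only point I expect to require genuine care is the simultaneous use of the two orthonormal frames $\{\nu,e_n\}$ and $\{\eta,V^{-1}\partial_0\}$ of the rank-two normal bundle of $\Sigma(s_0)$, together with the verification that the slices are totally geodesic; after that the argument is a direct substitution into Lemma~\ref{lmMatrix}, with attention to keeping the distinguished index $k=n$ and the signs of $\eta$ and $\nu$ consistent.
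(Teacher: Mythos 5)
Your argument is correct and follows essentially the same route as the paper: both proofs compute the second fundamental form of $\Sigma(s_0)$ in $\bH^{n+1}$ in two ways using that the slices $\bH^n\times\{s\}$ are totally geodesic, deduce $\sffb|_{T\Sigma(s_0)}=\<\nu,\eta\>\,h$, and then feed $\sigma_1(\sffb)=\Hbar$, $\sigma_1(\sffb|k)=\<\nu,\eta\>H(s_0)$, $\sigma_2(\sffb)=\tfrac12(\scal+n(n-1))$ into Lemma~\ref{lmMatrix} with the distinguished index in the direction normal to $\Sigma(s_0)$ inside $\Sigma$. Your treatment of the equality case (diagonalizing $h$ first and noting $\<\nu,\eta\>\neq 0$ from regularity of $s_0$) is in fact slightly more explicit than the paper's, which simply cites the equality clause of the lemma.
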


\begin{proof}
Let $p$ be a point in $\Sigma(s_0)$. We compute the second
fundamental form of $\Sigma(s_0)$ in $\bH^{n+1}$ at $p$ in two different
ways. Let $e_1 \in T_p\Sigma$ be a unit vector field
orthogonal to $T_p \Sigma(s_0)$. We denote by $\sffb_0$ the second
fundamental form of $\Sigma(s_0)$ in $\bH^{n+1}$. This is a symmetric
bilinear form on $T_p \Sigma(s_0)$ taking values in the normal bundle
$N_p \Sigma(s_0) \subset T_p \bH^{n+1}$. Further, we denote by $\sff_1$ 
the second fundamental form of $\Sigma(s_0)$ in $\Sigma$ computed with
respect to the vector $e_1$. Since $\bH^n \times \{s_0\}$ is totally
geodesic in $\bH^{n+1}$, we have 
\[
\sffb_0 = S_0 \eta.
\]
Similarly, 
\[
\sffb_0 = \sffb \nu + S_1 e_1.
\] 
Hence, taking the scalar product of the last two equalities with 
$\nu$, we get
\[
\<\eta, \nu\> S_0 = \sffb. 
\]

Let $\{e_2, \ldots, e_n\}$ be an orthonormal basis of $T \Sigma(s_0)$, 
then $\{e_1, \ldots, e_n\}$ is an orthonormal basis of $T_p \Sigma$. 
Set 
\[
\sffb_{ij} \definedas \sffb(e_i, e_j).
\]
Then, using the notation of Lemma \ref{lmMatrix}, we have
\[\begin{aligned}
\sigma_1(\sffb) 
&= \Hbar,\\
\sigma_1(\sffb|1) 
&= \sum_{i=2}^n \sffb(e_i, e_i)\\
&= \<\eta, \nu\>\sum_{i=2}^n S_0(e_i, e_i)\\
&= \<\eta, \nu\> H(s_0),\\
\sigma_2(\sffb) 
&= \frac{1}{2} \left(\Hbar^2 - \left|\sffb\right|^2\right)\\
&= \frac{\scal+n(n-1)}{2}.
\end{aligned}\]
Proposition \ref{propLevelSet} now follows from Lemma \ref{lmMatrix}.
\end{proof}

The proof of Proposition \ref{propMeanCurv} will also require following 
two lemmas, analogous to \cite[Lemma 3.3 and Lemma 3.4]{HuangWu2}.

\begin{lemma}\label{lmLocLevelSets}
Let $W$ be an open subset of $\bH^n$, possibly unbounded. 
Let $p \in \partial W$, and let $B(p)$ be a geodesic open ball in 
$\bH^n$ centered at $p$. Consider 
$f \in C^2(W \cap B(p))\cap C^1(\overline{W} \cap B(p))$ and let 
$\Hbar$ denote the mean curvature of its graph. If $f=C$ and $|df|=0$ on 
$\partial W\cap B(p)$, where $C$ is a constant, and $\Hbar \geq 0$ on 
$W\cap B(p)$ then either $f\equiv C$ in $W\cap B(p)$, or 
\[
\{x \in W \cap B(p) \mid f(x)>C\} \neq \emptyset.
\]
\end{lemma}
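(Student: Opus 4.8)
The plan is to prove Lemma~\ref{lmLocLevelSets} by a maximum-principle argument applied to the mean curvature operator $\Hbar$, viewed as a quasilinear elliptic operator in $f$. First I would set up the equation: from the formula for $\Hbar$ computed in Section~\ref{subsection_comp_scal}, the condition $\Hbar \geq 0$ on $W \cap B(p)$ says that $f$ is a subsolution of the minimal hypersurface equation on $\bH^{n+1}$ (with the warped metric $\bbar = b + V^2\,ds\otimes ds$). Concretely, $\left|\nablabar F\right|\,\Hbar$ is, up to the positive factor $\left|\nablabar F\right|$, a quasilinear elliptic expression in the $1$- and $2$-jets of $f$ whose coefficients are smooth in a neighborhood of any point where $f$ is $C^2$; since the ambient metric is smooth and the graph is a genuine $C^2$ hypersurface on $W\cap B(p)$, ellipticity is uniform on compact subsets. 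The constant function $f\equiv C$ is a solution (its graph is the totally geodesic slice $\bH^n\times\{C\}$, which has $\Hbar=0$). So $f-C$ is a subsolution of a homogeneous linear equation obtained by subtracting the two equations and using the mean value form of the coefficients.

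The key step is then the strong maximum principle. Suppose the conclusion fails, i.e.\ $f \leq C$ everywhere on $W\cap B(p)$ and $f\not\equiv C$. Then $u := C - f \geq 0$ is a supersolution of the relevant linear elliptic operator $L$ (with no zeroth order term, since both $f$ and $C$ solve the same equation and the difference operator annihilates constants), and $u$ is not identically zero. On the connected open set $W\cap B(p)$ (after passing to a connected component if necessary; shrinking $B(p)$ to ensure connectedness is harmless), the strong maximum principle forces $u>0$ throughout the interior, i.e.\ $f<C$ on $W\cap B(p)$. But now I would invoke the boundary behaviour: on $\partial W \cap B(p)$ we have $f = C$ and, crucially, $|df|=0$. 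The vanishing of the full gradient on the boundary portion is exactly what lets the Hopf boundary point lemma be contradicted — or rather, it is what lets us instead run the maximum principle \emph{up to the boundary}. Indeed, since $f = C$ with vanishing normal derivative on $\partial W\cap B(p)$, a standard doubling/reflection argument (reflect the graph of $f$ across the totally geodesic slice $\bH^n\times\{C\}$ along $\partial W\cap B(p)$, as is already used in the paper when taking the double over the horizon) produces a $C^1$ — in fact $C^{1,1}$ or better, by elliptic regularity for the minimal surface equation — subsolution on an open set containing an interior neighborhood of $p$. Applying the strong maximum principle to the reflected function, whose minimum $C$ is attained at the interior point $p$, forces $f\equiv C$ near $p$, and then a connectedness/continuation argument propagates $f\equiv C$ over all of $W\cap B(p)$, contradicting $f\not\equiv C$. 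Hence either $f\equiv C$ or $\{f>C\}\neq\emptyset$.

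The main obstacle I anticipate is making the reflection/doubling step rigorous: the graph of $f$ a priori only extends to a $C^1$ hypersurface across $\partial W\cap B(p)$, and one must verify that the doubled object is still a (weak) subsolution of the minimal hypersurface equation with sufficient regularity for the strong maximum principle and unique continuation to apply. This is where the hypothesis $|df|=0$ on $\partial W\cap B(p)$ does the real work: it guarantees that the graph meets the totally geodesic slice tangentially, so that the reflected graph has no corner and the mean curvature inequality survives reflection. An alternative that avoids doubling, and may be cleaner, is to argue directly: assume $f\leq C$ with $f\not\equiv C$, get $f<C$ in the interior by the interior strong maximum principle, and then derive a contradiction at a boundary point $q\in\partial W\cap B(p)$ where $f$ attains its boundary value $C$ using the Hopf lemma — but the Hopf lemma would give $\partial_\nu f\neq 0$ there, directly contradicting $|df|=0$ on $\partial W\cap B(p)$. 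This is the slicker route and I would present it that way: \textbf{interior strong maximum principle} plus \textbf{Hopf boundary point lemma}, with the boundary condition $|df|=0$ supplying the contradiction, modulo checking that the coefficients of the linearized operator remain bounded and elliptic up to $\partial W\cap B(p)$ — which holds because $f\in C^1(\overline{W}\cap B(p))$ and $|df|$ bounded keeps $1+V^2|df|^2$ and the ellipticity constants controlled.

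Either way, the structure is: (1) rewrite $\Hbar\ge 0$ as $f$ being a subsolution of a quasilinear elliptic equation for which constants are solutions; (2) form the difference with the constant solution to get a linear homogeneous elliptic inequality for $C-f$ with no zeroth-order term; (3) apply the strong maximum principle in the interior; (4) use the Hopf lemma (or a tangential reflection) at $\partial W\cap B(p)$ together with $|df|=0$ to rule out the case $f\leq C$, $f\not\equiv C$; hence $\{f>C\}\neq\emptyset$ unless $f\equiv C$.
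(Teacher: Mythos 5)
Your ``slicker route'' --- interior strong maximum principle to get $f<C$ strictly, then the Hopf boundary point lemma at a point of $\partial W\cap B(p)$ to contradict $|df|=0$ --- is exactly the paper's proof, and the reflection/doubling detour you describe first is unnecessary. The one point you gloss over is that the Hopf boundary point lemma requires an interior sphere condition at the boundary point where it is applied, and $\partial W$ carries no regularity hypothesis (it is just the boundary of an open set), so you cannot invoke the lemma at an arbitrary point of $\partial W\cap B(p)$. The paper supplies this by a maximal inscribed ball: pick $q\in W\cap B_{r/2}(p)$ and let $r'$ be the supremum of radii $\rho$ with $B_\rho(q)\subset W$; then $\overline{B}_{r'}(q)\subset B(p)$ touches $\partial W$ at some point $s\in\partial W\cap B(p)$ where the interior sphere condition holds, and the Hopf lemma at $s$ gives $|df|(s)>0$, the desired contradiction. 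With that standard fix inserted, your argument is complete and coincides with the paper's.
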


\begin{proof}
If $f\equiv C$ then there is nothing to prove. Suppose therefore that 
$f\not\equiv C$ and assume to get a contradiction that $f(x)\leq C$ 
everywhere in $W\cap B(p)$. 

We first note that in fact $f<C$ everywhere in $W\cap B(p)$. Indeed, let 
$q \in W\cap B(p)$ be such that $f(q)=C$. Then $q$ is an interior maximum 
point of $f$ in $W\cap B(p)$, whereas 
\begin{equation*}
\begin{split}
\Hbar 
&= 
\frac{V}{1+V^2\left|df\right|^2} 
\left(b^{ij} - \frac{V^2 \nabla^i f \nabla^j f}{1 + V^2 |df|^2} \right) \\
&\qquad 
\cdot \left[ 
\hessdd{i}{j} f + \frac{\nabla_i f \nabla_j V 
+ \nabla_i V \nabla_j f}{V} + V \<df, dV\> \nabla_i f \nabla_j f 
\right] \geq 0
\end{split}
\end{equation*}
in $W\cap B(p)$, see Section \ref{subsection_comp_scal}. By the Hopf 
strong maximum principle it follows that $f\equiv C$ in $W\cap B(p)$, 
which is a contradiction.

Now suppose that $B(p)=B_r(p)$ is the ball of radius $r$ around $p$. 
Fix a point $q \in B_{r/2}(p)$ and define 
$r' \definedas \sup \{r \mid B_r(q)\subset W\}$. It is clear that 
$B_{r'}(q) \subset W \cap B(p)$ and $\overline{B}_{r'}(q)\cap \partial W
\neq \emptyset$. Consequently, there is a point $s\in\partial W$ such that
the interior sphere condition holds at $s$. Then by the Hopf boundary lemma
\cite[Lemma 3.4]{GilbargTrudinger}, we have $|df|>0$ at $s$, which is a
contradiction. We conclude that $f>C$ holds somewhere in $W\cap B(p)$.
\end{proof}

\begin{definition}\label{defConvexPoint}
Let $W$ be a bounded subset of $\bH^n$ and let $\overline{W}$ be its 
closure. A point $p\in \partial W$ is called convex if there is a 
geodesic $(n-1)$-sphere $S$ in $\bH^n$ passing through $p$ such that 
$\overline{W} \setminus \{p\}$ is contained in the open geodesic ball 
enclosed by $S$.
\end{definition}

Note that every bounded set in $\bH^n \setminus \Omega$ has at least one
convex point. This follows from the assumption that $\Omega$ is convex.
We only sketch the proof of this fact leaving the details to the reader.
Choose a point $p \in W$ and let $q$ be the projection of $p$ onto
$\partial \Omega$. Then the hyperbolic subspace passing through $q$ and
orthogonal to the geodesic joining $p$ to $q$ cuts $\bH^n$ in two
half-spaces, a ``left'' one containing $\Omega$ and a ``right'' one
containing $p$. Then if $O'$ is located very far on the left side of
the geodesic $(qp)$, it is clear that the smallest sphere $S$ centered at
$O'$ containing $\Omega \cup W$ has a non-trivial intersection with
$\partial W$. Any point in $S \cap \partial W$ is then a convex point.

\begin{lemma}\label{lmLocMeanCurv}
Let $W$ be an open bounded subset of $\bH^n$ and let $p \in \partial W$ 
be a convex point. Suppose that 
$f \in C^n(W \cap B(p)) \cap C^1(\overline{W} \cap B(p))$ is such that 
$f=C$ and $|df|=0$ on $\partial W \cap B(p)$ for some constant $C$. If 
the graph of $f$ has scalar curvature $\scal \geq -n(n-1)$, then its mean
curvature $\Hbar$ must change sign in $W\cap B(p)$, unless $f\equiv C$ 
in $W\cap B(p)$.
\end{lemma}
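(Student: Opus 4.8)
### Proof plan for Lemma \ref{lmLocMeanCurv}

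The strategy is to argue by contradiction: suppose $f \not\equiv C$ in $W \cap B(p)$ but $\Hbar$ does not change sign there. Then, after possibly replacing $f$ by $2C - f$ (which reverses the sign of $\Hbar$ but preserves $|df|=0$ and $f=C$ on the boundary and the scalar curvature hypothesis), we may assume $\Hbar \geq 0$ on $W \cap B(p)$. By Lemma \ref{lmLocLevelSets} there is then a point where $f > C$. The plan is to use the convexity of $p$ together with Proposition \ref{propLevelSet} to force a contradiction by examining a carefully chosen superlevel set of $f$ near $p$.

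First I would use the convex point $p$ to set up a family of geodesic spheres. Since $p$ is convex, there is a geodesic $(n-1)$-sphere $S$ through $p$ with $\overline{W}\setminus\{p\}$ inside the enclosed ball; let $O'$ be its center and $l$ the distance from $O'$. Shrinking $B(p)$ if necessary, we work inside $B(p)$ and consider, for $c$ slightly bigger than $C$, the set $\Sigma(c) = \Sigma \cap f^{-1}(c)$ together with the region $W_c = \{x \in W\cap B(p) \mid f(x) > c\}$. For generic $c > C$ close to $C$ this is a nonempty open set (by Lemma \ref{lmLocLevelSets}) with $c$ a regular value of $f$, and its boundary portion lying in $W\cap B(p)$ is the smooth hypersurface $\Sigma(c)$. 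On $\partial W_c \cap \partial(W\cap B(p))$ the function $f$ is $< c$ for $c$ close enough to $C$ since $f \to C$ and $|df|\to 0$ there, so $\Sigma(c)$ is in fact a closed hypersurface sitting inside $W\cap B(p)$, bounding $W_c$. I then choose, among all the geodesic spheres $\partial B_\rho(O')$, the smallest one that still meets $\overline{W_c}$; it touches $\overline{W_c}$ at some point $x_0$, and at $x_0$ the level set $\Sigma(c)$ (as a hypersurface of $\bH^n\times\{c\}$) is tangent to this geodesic sphere from inside, so its mean curvature $H(c)$ at $x_0$, with respect to the normal $\eta$ pointing out of $W_c$, satisfies $H(c) \geq H_{\text{sphere}}(x_0) > 0$.

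Now I would apply Proposition \ref{propLevelSet} at the point $x_0 \in \Sigma(c)$. It gives
\[
\<\nu, \eta\>\, \Hbar\, H(c) \geq \frac{\scal + n(n-1)}{2} + \frac{n}{2(n-1)} \<\nu,\eta\>^2 H(c)^2 \geq \frac{n}{2(n-1)}\<\nu,\eta\>^2 H(c)^2,
\]
using $\scal \geq -n(n-1)$. Since $H(c) > 0$ and $\Hbar \geq 0$, dividing by $H(c)$ forces $\<\nu,\eta\> \geq 0$; combined with $\<\nu,\eta\>^2 \leq 1$ this yields $\Hbar \geq \frac{n}{2(n-1)} \<\nu,\eta\> H(c) > 0$ at $x_0$, provided $\<\nu,\eta\> > 0$ there. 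To rule out the degenerate case $\<\nu,\eta\>(x_0) = 0$: that would mean the normal $\nu$ of $\Sigma$ is tangent to the slice $\bH^n\times\{c\}$, equivalently $df(\eta) = \infty$ in the graph parametrization, which contradicts $c$ being a regular value of $f$ on $\Sigma$. So at $x_0$ we in fact have $\Hbar > 0$, which by itself is not yet a contradiction — the contradiction must come from letting $c \downarrow C$. As $c \to C$ the touching points $x_0 = x_0(c)$ converge to a point $x_* \in \partial W$, and along the way the chosen geodesic spheres converge to the supporting sphere $S$ at $p$, so the lower bound on $H(c)$ stays bounded away from $0$; but near $\partial W$ we have $|df| \to 0$, hence $\<\nu,\eta\> \to 1$ and $\Hbar \to \Delta f$ evaluated in the limit, while the explicit formula for $\Hbar$ in Proposition \ref{propLevelSet}'s proof and the boundary conditions force $\Hbar(x_0(c)) \to 0$. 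This contradicts $\Hbar(x_0(c)) \geq \frac{n}{2(n-1)} \<\nu,\eta\>(x_0(c)) H(c)$ with $H(c)$ bounded below. Hence $\Hbar$ must change sign, or $f \equiv C$.

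The main obstacle I expect is the final limiting argument: making rigorous that as $c \downarrow C$ the touching point $x_0(c)$ stays in $W \cap B(p)$ (not escaping to $\partial B(p)$), that the lower bound on $H(c)$ does not degenerate, and that $\Hbar(x_0(c)) \to 0$. The escape issue is handled by choosing $B(p)$ small relative to the geometry of $\partial W$ near $p$ and using that, since $|df| \to 0$ at $\partial W \cap B(p)$ but $f > c$ somewhere inside, the superlevel sets $W_c$ accumulate only at $\partial W$ near $p$ and not at $\partial B(p)$. The decay $\Hbar(x_0(c)) \to 0$ follows from the boundary conditions $f = C$, $|df| = 0$ on $\partial W \cap B(p)$ plugged into the mean curvature formula, but one needs the $C^n$ (indeed $C^2$ would suffice for this step) regularity up to the boundary — this is exactly why the hypothesis $f \in C^n(W\cap B(p)) \cap C^1(\overline{W}\cap B(p))$, combined with elliptic regularity for the mean curvature equation, is invoked as in \cite[Lemma 3.4]{HuangWu2}.
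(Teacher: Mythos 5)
Your setup (reduction to $\Hbar \geq 0$, Lemma \ref{lmLocLevelSets} to produce a superlevel set, a touching geodesic sphere centered at $O'$, and Proposition \ref{propLevelSet} at the touching point) is the strategy of the paper, but the way you try to close the argument has a genuine gap, and the limiting argument you append does not repair it. The missing observation is that $\<\nu, \eta\>$ has a definite sign: with $\eta = -\nabla f/|df|$ the normal pointing out of the superlevel set and $\nu = (\nabla f, -V^{-2})/\sqrt{V^{-2} + |df|^2}$ the graph normal, one computes $\<\nu, \eta\> = -|df|/\sqrt{V^{-2} + |df|^2} < 0$ wherever $df \neq 0$ (and $df \neq 0$ at the touching point, either because $c$ is a regular value or by the Hopf boundary lemma). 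Then Proposition \ref{propLevelSet} together with $\scal \geq -n(n-1)$, $\Hbar \geq 0$ and $H(c) > 0$ gives $0 \geq \<\nu, \eta\> \Hbar H(c) \geq \tfrac{n}{2(n-1)} \<\nu, \eta\>^2 H(c)^2 > 0$, an immediate contradiction at the single point $x_0$; no limit $c \downarrow C$ is needed. In your write-up you instead ``deduce'' $\<\nu, \eta\> \geq 0$, which is already the contradiction, but since you never pin down the sign of $\<\nu, \eta\>$ you treat this as a consistent branch and keep going.

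The appended limiting argument is broken on several counts. First, $\<\nu, \eta\> \to 0$ (not $1$) as $|df| \to 0$, since $|\<\nu, \eta\>| = |df|/\sqrt{V^{-2} + |df|^2}$; likewise your analysis of the ``degenerate case'' is backwards, as $\<\nu, \eta\> = 0$ corresponds to $df = 0$, not to $|df| = \infty$. Second, the claim $\Hbar(x_0(c)) \to 0$ would require second derivatives of $f$ to extend continuously to $\partial W$, whereas only $f \in C^1(\overline{W} \cap B(p))$ is assumed. Third, you do not establish that the touching points $x_0(c)$ actually accumulate on $\partial W$, nor that the comparison lower bound on $H(c)$ survives the limit. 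A further slip: the sphere you want is the \emph{largest} one still meeting the superlevel set, i.e.\ the one containing $\overline{W_c}$ and touching it at the point farthest from $O'$; the \emph{smallest} sphere meeting $\overline{W_c}$ touches it from the wrong side, and the comparison principle then gives $H(c) < 0$ for your choice of $\eta$, not $H(c) > 0$.
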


\begin{proof}
Suppose on the contrary that $\Hbar$ does not change sign and
$f \not\equiv 0$. By possibly reversing sign and adding a constant to $f$
we may assume that $\Hbar \geq 0$ and that $C=0$. 

Let $S_r$ be a geodesic $(n-1)$-sphere of radius $r$ as in Definition 
\ref{defConvexPoint}, centered at a point $O'\in\bH^n$, and such that 
$S_r\cap \overline{W}=\{p\}$. Let $\mu$ be a positive number strictly 
less than the distance from $W\setminus B(p)$ to $S_r$. Then for every 
sphere $S_{r'}$ of radius $r'\in (r-\mu, r)$ and centered at $O'$ we 
obviously have $S_{r'}\cap W \subset B(p)$. Let $f_0$ be a continuous 
function on $B(p)$ such that $f_0=f$ on $W\cap B(p)$ and $f_0=0$ on 
$B(p)\setminus W$. Define the function 
\[
g(r') \definedas \sup_{q\in S_{r'}\cap B(p)} f_0 (q)
\]
for $r'\in [r-\mu,r]$. It is easy to check that $g$ is continuous 
and satisfies $g(r)=0$. Next, we observe that by Lemma \ref{lmLocLevelSets} 
the ball $B_{\mu}(p)$ contains a point $q$ such that 
$f_0(q) = \epsilon > 0$. By the Morse-Sard theorem 
\cite[Theorem 7.2]{sard} we may assume that each connected component of 
the corresponding level set 
\[
\Sigma (\epsilon)=\{x \in W\cap B(p) \mid f_0(x)=\epsilon\}
\] 
of $f_0$ inside $W\cap B(p)$ is a $C^n$ hypersurface. It is clear that 
$g([r-\mu,r]) = [0,\epsilon']$, where $\epsilon \leq \epsilon'$, 
and hence 
\[
r_0 \definedas \max_{r'\in[r-\mu, r]} \{r' \mid g(r') = \epsilon\}
\] 
is well-defined. Then $S_{r_0}\cap\Sigma_\epsilon \neq \emptyset$, whereas 
$S_{r'}\cap \Sigma_\epsilon =\emptyset$ for $r_0< r'\leq r$, thus $S_{r_0}$ 
is tangent to $\Sigma (\epsilon)$ at some interior point $q$. Let $U$ 
be the open subset of $W\cap B(p)$ bounded by $S_{r_0}$ and $\partial W$,
\[
U = \{x \in W \cap B(p) | d(O', x) > r_0\},
\]
then $q\in \partial U$. We have $f(q)=\epsilon>f(x)$ for any $x\in U$, 
$\Hbar\geq 0$ holds in $U$, and the interior sphere condition is obviously 
satisfied at $q\in S_{r_0}$. Since $\eta=-\frac{\nabla f}{|df|}$ is orthogonal 
to $\partial U$ at $q$, it is easy to conclude by the Hopf boundary lemma 
that $\eta$ is the inward pointing normal to $\partial U$. Hence $\eta$ 
is the outward pointing normal for both $S_{r_0}$ and $\Sigma (\epsilon)$ 
at $q$. By the comparison principle, the mean curvature $H(\epsilon)$ of
$\Sigma (\epsilon)$ satisfies $H(\epsilon)>0$ at $q$. On the other 
hand, since the scalar curvature of the graph of $f$ is nonnegative, by 
Proposition \ref{propLevelSet} at $q$ we have 
\[
\<\nu, \eta\> \Hbar H(\epsilon) \geq 0.
\]
Here $\<\nu, \eta\> < 0$ since 
$\nu = \frac{(\nabla f, -V^{-2})}{\sqrt{V^{-2} + |df|^2}}$, $\Hbar\geq 0$, and 
if $\Hbar=0$ then $H(\epsilon) = 0$. This means that 
$H(\epsilon) \leq 0$ at $q$, which is a contradiction. Hence $\Hbar$ must 
change sign in $W\cap B(p)$.
\end{proof}

\begin{proof}[Proof of Proposition \ref{propMeanCurv}]
We assume by contradiction that $\Hbar$ changes sign, both sets
$\{\Hbar>0\}$ and $\{\Hbar<0\}$ are nonempty in $\Sigma$. Our first
observation is that each connected component of these two sets is
unbounded. Indeed, let $\Sigma_+$ be a bounded connected component of
$\{\Hbar>0\}$ and let $\partial_0 \Sigma_+$ be its outer boundary
component. By Lemma \ref{lmStructM0} we know that $\partial_0 \Sigma_+$
lies in an $n$-dimensional hyperbolic subspace $\Pi$. We view $\bH^{n+1}$
as $\Pi \times \bR$ with the metric 
$b+V^2 d\widetilde{s} \otimes d\widetilde{s}$, and we let $W$ be a subset 
of $\{\widetilde{s}=0\}$ bounded by $\partial_0 \Sigma_+$. Then in some
neighborhood of $\partial W$ we can write $\Sigma_+$ as the graph of a 
function $u$ such that $u=0$ and $|du|=0$ on $\partial W$. Now, considering 
a sufficiently small ball $B(p)$ around $p\in \partial W$, we immediately 
arrive at the contradiction, since $\Hbar$ must change sign in 
$W\cap B(p)$ by Lemma \ref{lmLocMeanCurv}.

We have just seen that if $\Sigma_+$ is a connected component of 
$\{\Hbar>0\}$ then it must be unbounded, and the same is clearly true for 
a connected component $\Sigma_-$ of $\{\Hbar<0\}$. Moreover, it follows 
by Proposition \ref{prop_unbounded_boundary} in Appendix \ref{appopensets} 
that one of the connected components of its boundary $\partial \Sigma_+$ 
is unbounded, and the same holds for $\partial \Sigma_-$. Let us denote 
such an unbounded component by $\partial_0 \Sigma_+$. By 
Lemma \ref{lmStructM0} we know that $\partial_0 \Sigma_+$ lies in an 
$n$-dimensional hyperbolic subspace $\Pi$ tangent to $\Sigma$ at every 
point of $\partial_0 \Sigma_+$. Since $\Sigma$ is asymptotically 
hyperbolic, $f$ tends to a constant value $C$ at infinity, so the fact 
that $\partial_0\Sigma_+$ is unbounded forces $\Pi$ to coincide with 
the plane $\{s=C\}$.

The component $\Sigma_+$ is the graph of $f$ over some open subset $W$ 
of $\bH^n$. Moreover, there is an unbounded component $\partial_0 W$ of 
the boundary $\partial W$ such that $f=C$ and $|df|=0$ on $\partial_0 W$. 
By Lemma \ref{lmLocLevelSets} there exists $q\in W$ such that 
$f(q)=C+\epsilon$ for some $\epsilon>0$. By the Morse-Sard theorem we 
know that there is an $\epsilon$ such that $C+\epsilon$ is a regular 
value of $f$, so that the corresponding level set 
$f^{-1}(C+\epsilon) = \{p \mid f(p) = C+\epsilon\}$ is a 
$C^n$ hypersurface with $|df|>0$ at each point. Suppose that $U$ is a 
connected component of $\{H\geq 0\}$ in $\bH^n$ which contains $W$. 
Then, using Proposition \ref{prop_unbounded_boundary} and the fact that 
$f$ tends to $C$ at infinity, it is easy to check that if some connected 
component of $f^{-1}(C+\epsilon)$ intersects $U$, then it is 
contained in $U$. It is also obvious that 
$f^{-1}(C+\epsilon)\cap U$ is nonempty and bounded, so we can 
find a point $p$ in this set which is at the largest distance $d$ from 
the origin $O$ of $\bH^n$. Let $\Sigma(C+\epsilon)$ be the connected 
component of $f^{-1}(C+\epsilon)$ which contains $p$. Then the 
geodesic sphere of radius $d$ centered at $O$ touches 
$\Sigma(C+\epsilon)$ at $p$, and there are no points $x$ such that 
$f(x) \geq C+\epsilon$ in $\{r>d\} \cap U$. Arguing as in the proof 
of Lemma \ref{lmLocMeanCurv}, we can show that 
$\eta\definedas-\frac{\nabla f}{|df|}=\partial_r$ at $p$, that is, $\nu$
is an outgoing normal to $\Sigma(C+\epsilon)$. The mean curvature
$H(C+\epsilon)$ is then positive at $p$, whereas Proposition
\ref{propLevelSet} tells us that $H(C+\epsilon)\leq 0$ at $p$, which is a
contradiction.
\end{proof}

Let $f$ be as in Theorem \ref{THM-Main}. We recall the expressions for 
$g$, $\sffb$, $\Hbar$, and $\scal$ obtained in Section \ref{secAHgraphs}, 
and rewrite them as functions of the arguments $Df$ and $D^2 f$, where 
$Df$ and $D^2 f$ denote the Euclidean gradient and the Euclidean Hessian 
respectively:
\[\begin{aligned}
g^{ij} (Df) 
&= 
b^{ij}-\frac{V^2 f^i f^j}{1+V^2 |df|^2},\\
\sffbdd{i}{j} (Df, D^2 f)
&= 
\frac{V}{\sqrt{1 + V^2 |df|^2}} 
\left[f_{ij}-\Gamma_{ij}^l f_l + \frac{f_i V_j + V_i f_j}{V} 
+ V \langle df, dV \rangle f_i f_j\right],\\
\sffb_j ^i (Df, D^2f) 
&= 
\frac{V}{\sqrt{1 + V^2 |df|^2}} 
\left(b^{ik}-\frac{V^2 f^i f^k}{1+V^2 |df|^2}\right) \\ 
&\qquad
\left( f_{kj}-\Gamma_{kj}^l f_l + \frac{f_k V_j + V_k f_j}{V} 
+ V \langle df, dV \rangle f_k f_j \right),\\
\Hbar (Df, D^2 f) 
&= 
\frac{V}{\sqrt{1 + V^2 |df|^2}} 
\left(b^{ij}-\frac{V^2 f^i f^j}{1+V^2|df|^2}\right)\\ 
&\qquad 
\left( f_{ij}-\Gamma_{ij}^l f_l + \frac{f_i V_j + V_i f_j}{V} 
+ V \langle df, dV \rangle f_i f_j \right),\\
\scal(Df, D^2 f) 
&= -n(n-1) + \Hbar^2 (Df, D^2 f)- \sffb_i^j (Df,D^2 f) \sffb_j^i(Df,D^2 f).
\end{aligned}\]

Following \cite[Section 4]{HuangWu2}, we will now prove maximum principles
for the scalar curvature equation $\scal(Df, D^2 f) + n(n-1)=0$. 
The lemma below concerns ellipticity of this equation.

\begin{lemma}\label{lmLinScalCurv}
\[
\frac{\partial \scal}{\partial f_{ij}} 
= \frac{2V}{\sqrt{1+V^2 |df|^2}} \left(\Hbar g^{ij} - \sffb^{ij} \right).
\]
\end{lemma}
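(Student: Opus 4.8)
The plan is to differentiate the algebraic identity $\scal(Df,D^2f) = -n(n-1) + \Hbar^2 - \sffb_i^j\sffb_j^i$ recorded in the block of formulas just above, using the chain rule. The computation is purely algebraic, because the Euclidean Hessian $D^2f$ enters $\scal$ only in a very mild way.

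First I would isolate the dependence on the second derivatives. Inspecting the expression for $\sffbdd{k}{l}(Df,D^2f)$ displayed above, the only term containing the entries $f_{kl}$ of the Euclidean Hessian is the leading one $\frac{V}{\sqrt{1+V^2|df|^2}}f_{kl}$; the remaining terms $-\Gamma_{kl}^mf_m$, $\frac{f_kV_l+V_kf_l}{V}$ and $V\langle df,dV\rangle f_kf_l$ depend on $Df$ alone. Hence
\[
\frac{\partial \sffbdd{k}{l}}{\partial f_{ij}}
= \frac{V}{\sqrt{1+V^2|df|^2}}\,\frac{\partial f_{kl}}{\partial f_{ij}},
\qquad
\frac{\partial f_{kl}}{\partial f_{ij}} = \frac{1}{2}\left(\delta_k^i\delta_l^j + \delta_k^j\delta_l^i\right),
\]
the symmetrizer being forced by $f_{kl}=f_{lk}$. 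Likewise $g^{kl}(Df)$, and therefore the index raising $\sffbuu{i}{j}=g^{ik}g^{jl}\sffbdd{k}{l}$, is independent of $D^2f$.

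Next I would differentiate the two pieces. Since $\Hbar = g^{kl}\sffbdd{k}{l}$ with $g^{kl}$ symmetric and free of $D^2f$, contracting the display above gives $\frac{\partial\Hbar}{\partial f_{ij}} = \frac{V}{\sqrt{1+V^2|df|^2}}\,g^{ij}$. For the quadratic term, writing $\sffb_i^j\sffb_j^i = |\sffb|^2_g = g^{ac}g^{bd}\sffbdd{a}{b}\sffbdd{c}{d}$ and applying the product rule (the two summands coincide by symmetry) yields
\[
\frac{\partial}{\partial f_{ij}}\left(\sffb_k^l\sffb_l^k\right)
= 2\,\sffbuu{a}{b}\,\frac{\partial \sffbdd{a}{b}}{\partial f_{ij}}
= \frac{2V}{\sqrt{1+V^2|df|^2}}\,\sffbuu{i}{j}.
\]
Substituting both expressions into $\frac{\partial\scal}{\partial f_{ij}} = 2\Hbar\,\frac{\partial\Hbar}{\partial f_{ij}} - \frac{\partial}{\partial f_{ij}}\left(\sffb_k^l\sffb_l^k\right)$ gives precisely $\frac{\partial\scal}{\partial f_{ij}} = \frac{2V}{\sqrt{1+V^2|df|^2}}\left(\Hbar g^{ij} - \sffbuu{i}{j}\right)$.

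There is no genuine analytic difficulty here; the only point requiring care is the bookkeeping — carrying the symmetrizer $\frac{1}{2}(\delta_k^i\delta_l^j+\delta_k^j\delta_l^i)$ through each contraction and checking that, paired with the symmetric tensors $g^{kl}$ and $\sffbdd{k}{l}$, it collapses to $\delta_k^i\delta_l^j$ and produces no spurious factor of $2$. Once this is settled the identity follows immediately.
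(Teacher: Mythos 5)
Your proof is correct and follows essentially the same route as the paper's: differentiate $\scal = -n(n-1)+\Hbar^2-\sffb{}^j_i\sffb{}^i_j$ in $f_{ij}$, noting that $D^2f$ enters only through the leading term $\frac{V}{\sqrt{1+V^2|df|^2}}f_{kl}$ of $\sffbdd{k}{l}$ while $g^{ij}$ depends on $Df$ alone. The paper simply records this as a one-line chain-rule computation; your explicit tracking of the symmetrizer $\tfrac12(\delta_k^i\delta_l^j+\delta_k^j\delta_l^i)$ is a harmless refinement of the same argument.
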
 
\begin{proof}
A straightforward computation gives 
\[\begin{aligned}
\frac{\partial \scal}{\partial f_{ij}} 
&= 
2\Hbar \frac{\partial \Hbar}{\partial f_{ij}} 
-2\sffb^k_l \frac{\partial \sffb^l_k}{\partial f_{ij}}\\ 
&=
\frac{2V}{\sqrt{1+V^2 |df|^2}} \left(\Hbar g^{ij}- \sffb^k_l g^{lm} 
\frac{\partial f_{mk}}{\partial f_{ij}} \right)\\ 
&=
\frac{2V}{\sqrt{1+V^2 |df|^2}}\left(\Hbar g^{ij}-\sffb^{ij} \right).
\end{aligned}\]
\end{proof}

\begin{proposition}
Let $f$ be as in Theorem \ref{THM-Main}. Suppose that the scalar 
curvature $\scal$ and the mean curvature $\Hbar$ of its graph satisfy 
$\scal \geq -n(n-1)$ and $\Hbar \geq 0$ Then the matrix 
$\left(\Hbar g^{ij}-\sffb^{ij}\right)$ is positive semi-definite everywhere 
in $\bH^n\setminus \overline{\Omega}$.
\end{proposition}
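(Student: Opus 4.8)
The plan is to reduce the assertion to a pointwise statement about the eigenvalues of the second fundamental form and then apply Lemma~\ref{lmMatrix}. Fix an arbitrary point $p \in \bH^n \setminus \overline{\Omega}$ and diagonalize the shape operator $\sffb^i_j$ of $\Sigma$ with respect to $g$ at $p$, with eigenvalues $\lambda_1, \dots, \lambda_n$. Then $\Hbar = \sum_i \lambda_i$, and the eigenvalues of the symmetric $(2,0)$-tensor $\Hbar g^{ij} - \sffb^{ij}$ relative to $g$ (that is, of the endomorphism $\Hbar \delta^i_j - \sffb^i_j$) are precisely $\Hbar - \lambda_k$ for $k = 1, \dots, n$. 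So it suffices to show $\Hbar - \lambda_k \geq 0$ for every $k$.

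First I would record the consequence of the curvature hypothesis. Taking the trace of the Gauss equation, as in \eqref{eqScalRen}, gives $\scal + n(n-1) = \Hbar^2 - |\sffb|_g^2$, hence
\[
\sigma_2(\sffb) = \tfrac{1}{2}\bigl(\Hbar^2 - |\sffb|_g^2\bigr) = \tfrac{1}{2}\bigl(\scal + n(n-1)\bigr) \geq 0
\]
by assumption. Applying the identity of Lemma~\ref{lmMatrix} to the diagonal matrix $A = \diag(\lambda_1, \dots, \lambda_n)$, for which all off-diagonal entries vanish, yields for each $k$
\[
\Hbar\,(\Hbar - \lambda_k) = \sigma_2(\sffb) + \frac{n}{2(n-1)}(\Hbar - \lambda_k)^2 + \frac{1}{2(n-1)} \sum_{\substack{1 \leq i < j \leq n\\ i\neq k, j\neq k}} (\lambda_i - \lambda_j)^2.
\]
Every term on the right-hand side is nonnegative, so $\Hbar(\Hbar - \lambda_k) \geq 0$.

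It then remains to remove the factor $\Hbar$. If $\Hbar(p) > 0$ we divide and conclude $\Hbar - \lambda_k \geq 0$ directly. If $\Hbar(p) = 0$, then $|\sffb|_g^2 \leq \Hbar^2 = 0$ forces $\sffb(p) = 0$, so $\lambda_k = 0$ and again $\Hbar - \lambda_k = 0$. In either case all eigenvalues of $\Hbar g^{ij} - \sffb^{ij}$ at $p$ are nonnegative, i.e. the tensor is positive semi-definite at $p$; since $p$ was arbitrary, the proposition follows.

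I do not anticipate a genuine obstacle here: the two ingredients — the Gauss equation from Section~\ref{subsection_comp_scal} and the algebraic identity of Lemma~\ref{lmMatrix} — are already in place, and the only point requiring care is the degenerate locus $\{\Hbar = 0\}$, which is handled by the observation (already noted after Proposition~\ref{propMeanCurv}) that $\scal \geq -n(n-1)$ is equivalent to $|\sffb|^2 \leq \Hbar^2$ pointwise.
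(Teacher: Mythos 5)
Your argument is correct and is essentially the paper's own proof: both reduce to the diagonalized shape operator at a point, apply Lemma~\ref{lmMatrix} with $\sigma_2(\sffb)=\tfrac12(\scal+n(n-1))\ge 0$ to get $\Hbar\,\sigma_1(\sffb|k)\ge 0$, and dispose of the locus $\Hbar=0$ by noting that there $|\sffb|^2\le\Hbar^2$ forces $\sffb=0$. The only cosmetic difference is that you quote the full identity of the lemma rather than the resulting inequality.
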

\begin{proof}
We work at a point $p\in \bH^n\setminus \overline{\Omega}$. Since 
$\Hbar g^{ij} - \sffb^{ij} = 
\sum _k \left(\Hbar \delta_k^j - \sffb^j _k \right)g^{ik}$, where $g^{ik}$ 
is positive definite, we only need to show that 
$\left(\Hbar \delta_k^j - \sffb^j _k \right)$ is positive semi-definite. 
After possibly rotating the coordinates, we may assume that 
$\sffb = \left(\sffb^j _k\right)=\diag (\lambda_1, \ldots, \lambda_n)$. 
Then, in the notation of Lemma \ref{lmMatrix}, we have
\[
\left(\Hbar \delta_k^j - \sffb^j _k \right)
= \diag \left(\sigma_1 (\sffb|1), \ldots, \sigma_1 (\sffb|n)\right).
\]
By Lemma \ref{lmMatrix} it follows that
\[
\sigma_1(\sffb) \sigma_1 (\sffb|k)
\geq 
\sigma_2(\sffb)+\frac{n}{2(n-1)}\left(\sigma_1 (\sffb|k)\right)^2,
\]
for $k=1,\ldots,n$. If $\sigma_1 (\sffb)=\Hbar > 0$, since 
$\sigma_2 (\sffb)=\frac{1}{2}(\scal+n(n-1))\geq 0$, it is obvious that 
$\sigma_1 (\sffb|k)\geq 0$ for every $k=1,\ldots,n$. Otherwise if
$\Hbar = 0$ then $\sffb = 0$ and hence $\sigma_1 (\sffb|k) = 0$.
This proves that $\sigma_1(\sffb|k) \geq 0$.
\end{proof}

In the next two propositions we prove versions of the maximum principle 
for the scalar curvature equation, the first one for points in the interior 
and the second one for points on the boundary.

\begin{proposition}
\label{PropIntMaxPrinciple}
Let $f_i : \bH^n \setminus \overline{\Omega}\rightarrow \bR$, $i=1,2$, 
be two functions satisfying the assumptions of Theorem \ref{THM-Main}. 
Suppose that $f_1\geq f_2$ in $\bH^n \setminus \overline{\Omega}$, and that
$f_i$, $i=1,2$, satisfy the inequalities
\[\begin{aligned}
\scal(Df_1, D^2 f_1) = -n(n-1), &\qquad \Hbar(Df_1, D^2 f_1)\geq 0,\\
\scal(Df_2, D^2 f_2) \geq -n(n-1), &\qquad \Hbar(Df_2, D^2 f_2)\geq 0
\end{aligned}\]
in $\bH^n \setminus \overline{\Omega}$. If the matrix 
$\left(\Hbar g^{ij}- \sffb^{ij}\right)$ is positive definite in
$\bH^n\setminus \overline{\Omega}$ for either $f_1$ or $f_2$, and if
$f_1=f_2$ at some point of $\bH^n \setminus \overline{\Omega}$, then
$f_1\equiv f_2$ in $\bH^n \setminus \overline{\Omega}$.
\end{proposition}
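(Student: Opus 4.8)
The plan is to follow the linearization scheme of \cite[Section~4]{HuangWu2}: show that $w \definedas f_1 - f_2 \geq 0$ satisfies a linear elliptic inequality with no zeroth order term, and then invoke Hopf's strong maximum principle. The key structural point, visible from the formulas for $g$, $\sffb$, $\Hbar$, $\scal$ recalled just before Lemma~\ref{lmLinScalCurv}, is that $\scal(Df, D^2 f)$ depends on $f$ only through its gradient and Hessian, the point-dependence entering only through $V$ and the Christoffel symbols of $b$. Writing $f_t \definedas f_2 + t w$ for $t \in [0, 1]$ and applying the fundamental theorem of calculus to $t \mapsto \scal(Df_t, D^2 f_t)$ gives, in local coordinates,
\[
a^{ij} \partial_i \partial_j w + b^i \partial_i w = \scal(Df_1, D^2 f_1) - \scal(Df_2, D^2 f_2) \leq 0,
\]
where we used $\scal(Df_1, D^2 f_1) = -n(n-1) \leq \scal(Df_2, D^2 f_2)$, where $b^i \definedas \int_0^1 \frac{\partial \scal}{\partial f_i}(Df_t, D^2 f_t)\, dt$, and where, by Lemma~\ref{lmLinScalCurv},
\[
a^{ij} \definedas \int_0^1 \frac{\partial \scal}{\partial f_{ij}}(Df_t, D^2 f_t)\, dt = \int_0^1 \frac{2V}{\sqrt{1 + V^2 |df_t|^2}}\left(\Hbar\, g^{ij} - \sffb^{ij}\right)(Df_t, D^2 f_t)\, dt.
\]
This is a genuinely linear equation in $w$ with no zeroth order term, so no sign condition on a coefficient $c$ is needed.

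The main obstacle is ellipticity of the operator $L \definedas a^{ij}\partial_i\partial_j + b^i\partial_i$: one cannot simply invoke the previous proposition along the whole segment $f_t$, since the interpolants need not satisfy $\scal \geq -n(n-1)$ or $\Hbar \geq 0$. I would circumvent this by working at a point $p_0$ where $f_1(p_0) = f_2(p_0)$: since $w \geq 0$ attains its minimum there, $Dw(p_0) = 0$, so $Df_1(p_0) = Df_2(p_0)$ and hence $Df_t(p_0)$ is independent of $t$. As $g^{ij}$ depends only on the gradient, and $\Hbar$ and $\sffb_{ij}$ are affine in the Hessian for a fixed gradient, the matrix $\left(\Hbar\, g^{ij} - \sffb^{ij}\right)(Df_t, D^2 f_t)(p_0)$ is affine in $t$, hence equals $t\left(\Hbar\, g^{ij} - \sffb^{ij}\right)(Df_1, D^2 f_1)(p_0) + (1-t)\left(\Hbar\, g^{ij} - \sffb^{ij}\right)(Df_2, D^2 f_2)(p_0)$. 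By the previous proposition both endpoint matrices are positive semi-definite, and by hypothesis one of them is positive definite; integrating against the positive weight $\frac{2V}{\sqrt{1 + V^2 |df_t|^2}}$ over $t \in [0,1]$ then shows that $(a^{ij}(p_0))$ is positive definite. Since $f_1$ and $f_2$ are smooth, the coefficients of $L$ are continuous, so $L$ is uniformly elliptic with bounded coefficients on a small ball $B$ around $p_0$.

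On $B$ we then have $Lw \leq 0$, $w \geq 0$, $w(p_0) = 0$, with $L$ uniformly elliptic and without zeroth order term; Hopf's strong maximum principle \cite[Theorem~3.5]{GilbargTrudinger} gives $w \equiv 0$ near $p_0$. The same argument applies at every point of the set $\{w = 0\}$ (at any such point $w$ attains its minimum, hence its gradient vanishes there), so $\{w = 0\}$ is open; it is clearly closed, and $\bH^n \setminus \overline{\Omega}$ is connected, so $w \equiv 0$, that is, $f_1 \equiv f_2$ in $\bH^n \setminus \overline{\Omega}$.
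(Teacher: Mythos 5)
Your proof is correct and follows essentially the same route as the paper: linearize the scalar curvature operator along an interpolation between $f_1$ and $f_2$, use Lemma~\ref{lmLinScalCurv} together with the positive semi-definiteness result and the vanishing of $D(f_1-f_2)$ at a coincidence point to get ellipticity of $(a^{ij})$ there, and conclude by the Hopf strong maximum principle plus a connectedness argument. The only cosmetic difference is that you interpolate along the straight segment $f_2+tw$ in both arguments simultaneously, whereas the paper splits the difference into a Hessian variation at frozen gradient $Df_1$ and a separate gradient variation; both reduce to the same convex combination of positive semi-definite matrices at the touching point.
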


\begin{proof}
We consider the scalar curvature operator as 
$\scal(p,\xi) \in C^1\left(\bR^n,\bR^n\times\bR^n\right)$. Then 
\[\begin{aligned}
0
&\geq 
\scal (Df_1, D^2 f_1)-\scal (Df_2, D^2 f_2) \\
&=
\scal (D f_1, D^2 f_1)-\scal (D f_1, D^2 f_2) + 
\scal (D f_1, D^2 f_2)-\scal (D f_2, D^2 f_2) \\
&=
\sum_{i,j} a^{ij} ((f_1)_{ij}-(f_2)_{ij}) + \sum_i b^i ((f_1)_i-(f_2)_i),
\end{aligned}\] 
where
\[
b^i = 
\int_0^1 \frac{\partial \scal}{\partial p_i}(t Df_1+(1-t)Df_2, D^2 f_2) \, dt,
\]
and
\[
a^{ij}
=
\int_0^1 \frac{\partial \scal}{\partial \xi_{ij}}
(Df_1, tD^2 f_1+(1-t) D^2 f_2)\, dt.
\]
Note that by Lemma \ref{lmLinScalCurv} we have
\[\begin{aligned}
a^{ij}
&=
\int_0^1 \frac{\partial \scal}{\partial \xi_{ij}}
(Df_1, tD^2 f_1+(1-t) D^2 f_2)\, dt \\
&= 
\frac{2V}{\sqrt{1+V^2 |df|^2}} \int_0^1
\left(\Hbar g^{ij}-\sffb^j_k g^{ik}\right)(Df_1, tD^2 f_1+(1-t)D^2 f_2)\, dt \\
&= 
\frac{2V}{\sqrt{1+V^2 |df|^2}} \left[ \int_0 ^1 t 
\left(\Hbar (Df_1, D^2 f_1)g^{ij}(Df_1)
- \sffb^j_k (Df_1, D^2 f_1) g^{ik}(Df_1)\right)\, dt \right. \\
&\qquad+
\left. \int_0 ^1 (1-t) \left(\Hbar (Df_1, D^2 f_2)g^{ij}(Df_1)
-\sffb^j_k (Df_1, D^2 f_2) g^{ik}(Df_1)\right) \, dt \right] \\
&= 
\frac{V}{\sqrt{1+V^2 |df|^2}} \left[ 
\left(\Hbar (Df_1, D^2 f_1)g^{ij}(Df_1)
-\sffb^j_k (Df_1, D^2 f_1) g^{ik}(Df_1)\right)\right. \\
&\qquad+ 
\left.\left(\Hbar (Df_1, D^2 f_2)g^{ij}(Df_1)
-\sffb^j_k (Df_1, D^2 f_2) g^{ik}(Df_1)\right) \right].
\end{aligned}\]
If $f_1=f_2$ at $p\in \bH^n \setminus \overline{\Omega}$, then $p$ is a 
local minimum point of $f_1-f_2$, hence $Df_1=Df_2$ at $p$. Consequently, 
$a^{ij}$ is positive definite at $p$. By continuity, $a^{ij}$ is positive 
definite in some open neighborhood $U$ of $p$ in 
$\bH^n \setminus \overline{\Omega}$. Then $f_1 \equiv f_2$ in $U$ by the 
Hopf strong maximum principle. It follows that the set 
$\{p\in \bH^n \setminus \overline{\Omega} \mid f_1 (p)=f_2(p)\}$ is both 
open and closed in $\bH^n \setminus \overline{\Omega}$. Since 
$\bH^n \setminus \overline{\Omega}$ is connected, we conclude that 
$f_1\equiv f_2$ everywhere $\bH^n \setminus \overline{\Omega}$.
\end{proof}

\begin{proposition}
\label{PropBoundMaxPrinciple} 
Let $f_i : \bH^n \setminus \overline{\Omega}\rightarrow \bR$, $i=1,2$, be 
functions satisfying the assumptions of Theorem \ref{THM-Main}. Suppose 
that $f_1\geq f_2\geq C$ in $\bH^n \setminus \overline{\Omega}$, and that 
$f_i$, $i=1,2$, satisfy the inequalities
\[\begin{aligned}
\scal(Df_1, D^2 f_1) = -n(n-1), &\qquad \Hbar(Df_1, D^2 f_1)\geq 0,\\
\scal(Df_2, D^2 f_2) \geq -n(n-1), &\qquad \Hbar(Df_2, D^2 f_2)\geq 0
\end{aligned}\]
in $\bH^n \setminus \overline{\Omega}$. If the matrix 
$\left(\Hbar g^{ij}- \sffb^{ij}\right)$ is positive definite in $\bH^n\setminus \Omega$ for either 
$f_1$ or $f_2$, and if $f_1=f_2=C$ on $\partial\Omega$, then 
$f_1\equiv f_2$ in $\bH^n \setminus \overline{\Omega}$.
\end{proposition}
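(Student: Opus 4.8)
The plan is to remove the degeneracy of the scalar curvature equation at $\partial\Omega$ — where $|df_i|\to\infty$ — by reflecting the two graphs across the horizontal slice $\{s=C\}$, which turns $\partial\Omega$ into an interior locus, and then to invoke the strong maximum principle exactly as in the proof of Proposition~\ref{PropIntMaxPrinciple}. First I would fix a point $p_0\in\partial\Omega$ and work in Fermi coordinates $(t,y)$ for $b$ near $p_0$, with $t$ the signed $b$-distance to $\partial\Omega$, positive on $\bH^n\setminus\Omega$. Since $f_i\equiv C$ on $\partial\Omega$, $f_i\geq C$, and $|df_i|\to\infty$, the graph $\Sigma_i$ is a $C^1$ hypersurface up to $\partial\Omega\times\{C\}$ and is tangent there to the vertical cylinder $\partial\Omega\times\bR$; concretely $\partial_t f_i>0$ for small $t>0$ and $\partial_t f_i\to+\infty$ as $t\to0^+$, so near $p_0$ one may write $\Sigma_i=\{t=v_i(s,y)\}$ over a piece of the cylinder, with $v_i=0$ and $Dv_i=0$ on $\{s=C\}$, and with $v_1\leq v_2$ (from $f_1\geq f_2$ together with the monotonicity of $f_i$ in $t$). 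Because $s\mapsto 2C-s$ is an isometry of $\bbar$ fixing $\bH^n\times\{C\}$, reflecting $\Sigma_i$ across this slice produces a hypersurface $\widehat\Sigma_i$ without boundary near $p_0$, namely the graph $\{t=w_i(s,y)\}$ with $w_i(s,y)=v_i(C+|s-C|,y)$; thus $w_i\in C^1$ with $w_i=0$ and $Dw_i=0$ on $\{s=C\}$, $w_1\leq w_2$, and $\scal=-n(n-1)$ (for $i=1$) resp.\ $\scal\geq -n(n-1)$ (for $i=2$) on $\{s\neq C\}$, with $\Hbar\geq0$ throughout for the continuous unit normal that agrees with $\nu$ on the upper sheet.

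The key technical point, which is where the arguments of \cite[Section~4]{HuangWu2} are needed, is regularity at the horizon: near $p_0$ the graph equation for $w_i$ is uniformly elliptic, since its linearization has principal part proportional to the positive definite matrix $\Hbar g^{ij}-\sffb^{ij}$ (compare Lemma~\ref{lmLinScalCurv}), so the $C^1$ function $w_i$ is in fact smooth across $\{s=C\}$ and $\widehat\Sigma_i$ is a bona fide solution near $p_0$. Granting this, $p_0$ is an interior point at which $w_1$ and $w_2$ are tangent with $w_1\leq w_2$, and I would run the argument of Proposition~\ref{PropIntMaxPrinciple}: express $\scal(Dw_1,D^2w_1)-\scal(Dw_2,D^2w_2)$ as a linear elliptic operator applied to $w_1-w_2$, with coefficients positive definite near $p_0$ because the $1$-jets of $w_1$ and $w_2$ agree along $\{s=C\}$; the strong maximum principle then forces $w_1\equiv w_2$ in a neighborhood of $p_0$. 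When both scalar curvatures are identically $-n(n-1)$ — which is the case in the application to the rigidity statement of Theorem~\ref{THM-Main} — this step is immediate from the strong maximum principle for a linear equation; in the general asymmetric case one has to track the signs carefully, using that $w_1-w_2$ vanishes to first order along all of $\{s=C\}$.

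Finally, $w_1\equiv w_2$ near $p_0$ means $f_1=f_2$ on a neighborhood of $p_0$ in $\bH^n\setminus\overline\Omega$, so in particular $f_1$ and $f_2$ coincide at an interior point; Proposition~\ref{PropIntMaxPrinciple} then yields $f_1\equiv f_2$ on the connected set $\bH^n\setminus\overline\Omega$. I expect the regularity and ellipticity analysis at the horizon to be the main obstacle; the rest is bookkeeping together with an appeal to the interior maximum principle.
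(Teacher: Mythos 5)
Your overall skeleton is the right one --- reparametrize near the horizon so that the gradients vanish instead of blowing up, derive a linear elliptic inequality for the difference of the two reparametrized functions, force an interior coincidence point, and then finish with Proposition~\ref{PropIntMaxPrinciple} --- and your reduction $w_1\leq w_2$ with equal $1$-jets along $\{s=C\}$ is correct. But the step you yourself flag as the main obstacle is a genuine gap, not a technicality. The reflection $w_i(s,y)=v_i(C+|s-C|,y)$ produces a function that is a priori only $C^1$ across $\{s=C\}$, and the interior strong maximum principle argument of Proposition~\ref{PropIntMaxPrinciple} needs $w_i\in C^2$ there in order to write $\scal(Dw_1,D^2w_1)-\scal(Dw_2,D^2w_2)$ as $a^{ij}(w_1-w_2)_{ij}+b^i(w_1-w_2)_i$ via the mean value theorem. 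Asserting that ellipticity upgrades $C^1$ to smoothness across the interface is not justified: you have not shown that $w_i$ solves the equation across $\{s=C\}$ in any weak or viscosity sense (it solves it classically only on each open side), and for a fully nonlinear second-order equation such as $\scal(Df,D^2f)=-n(n-1)$ this passage is exactly the hard part. Worse, even if $v_i$ were smooth up to $\{s=C\}$, its even reflection is generally only $C^{1,1}$: the odd-order normal derivatives of $v_i$ at $s=C$ would all have to vanish for $w_i$ to be $C^2$, and proving that they do is essentially equivalent to the regularity statement you are assuming.

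The paper avoids this entirely and stays one-sided. It writes $\Sigma_1,\Sigma_2$ near a horizon point $p$ as graphs $\widetilde f_1\geq\widetilde f_2$ over the half-ball $U=B_r(p)\cap\{s>C\}$ in the hyperbolic hyperplane $\Pi$ orthogonal to the common normal $\nu(p)$, so that $\nabla\widetilde f_i(p)=0$ and the linearized operator (with principal part $\Hbar g^{ij}-\sffb^{ij}$ as in Lemma~\ref{lmLinScalCurv}) is uniformly elliptic on $U$ for small $r$. Then, instead of reflecting, it applies the Hopf boundary point lemma at $p\in\partial U$: if $\widetilde f_1>\widetilde f_2$ throughout $U$, the lemma forces $\nabla(\widetilde f_1-\widetilde f_2)(p)\neq 0$, contradicting the vanishing of both gradients at $p$; hence the two functions already agree at an interior point, and Proposition~\ref{PropIntMaxPrinciple} concludes. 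This uses only $C^2$ regularity in the open half plus $C^1$ up to the boundary, which is all that is available. Your argument can be repaired by dropping the reflection and running exactly this Hopf-lemma step on your one-sided parametrization $v_1\leq v_2$ over the cylinder; as written, however, the regularity claim on which your interior maximum principle rests is unproved.
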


\begin{proof}
Let $\Sigma_i$ denote the graph of $f_i$, $i=1,2$. Take 
$p \in \partial \Sigma_1=\partial \Sigma_2\subset \{s=C\}$, and let 
$\nu(p)$ be the common normal to $\Sigma_i$, $i=1,2$, 
at this boundary point. Suppose that $\Pi$ is the hyperbolic subspace 
orthogonal to $\nu(p)$, then $\Pi$ is isometric to $\bH^n$. Let $B_r(p)$ 
be a geodesic ball of radius $r$ in $\Pi$ centered at $p$, and let 
$U = B_r(p)\cap \{s> C\}$. If $r$ is sufficiently small, we can write 
$\Sigma_i$ near $p$ as the graph of $\widetilde{f}_i: U \rightarrow \bR$, 
$i=1,2$, in $U\times \bR$ with the metric 
$b+V^2 d\widetilde{s} \otimes d\widetilde{s}$, where $b$ is the hyperbolic 
metric on $U$, and $\widetilde{s}$ is the coordinate along the $\bR$-factor. 
It is obvious that $\nabla\widetilde{f}_i=0$ at $p$ for $i=1,2$. We also 
have $\widetilde{f}_1 \geq \widetilde{f}_2$ in $U$, and
\[\begin{aligned}
\scal(D\widetilde{f}_1, D^2 \widetilde{f}_1)=-n(n-1), 
&\qquad 
\Hbar(D\widetilde{f}_1, D^2 \widetilde{f}_1)\geq 0, \\
\scal(D\widetilde{f}_2, D^2 \widetilde{f}_2)\geq -n(n-1), 
&\qquad 
\Hbar(D\widetilde{f}_2, D^2 \widetilde{f}_2)\geq 0.
\end{aligned}\]
Moreover, either $\widetilde{f}_1$ or $\widetilde{f}_2$ has positive 
definite matrix $\left(\Hbar g^{ij}- \sffb^{ij}\right)$ at $p$. Arguing as in the 
proof of Proposition \ref{PropIntMaxPrinciple}, we see that 
$(\widetilde{f}_1-\widetilde{f}_2)$ satisfies
\[
0 \geq
\sum_{i,j} a^{ij} ((\widetilde{f}_1)_{ij}-(\widetilde{f}_2)_{ij}) 
+ \sum_i b^i ((\widetilde{f}_1)_i-(\widetilde{f}_2)_i),
\]
where we may assume (after decreasing $r$) that $a^{ij}$ is positive 
definite on $U$. If we assume that $\widetilde{f}_1>\widetilde{f}_2$ in 
$U$ then by the Hopf boundary lemma we have 
$\nabla (\widetilde{f}_1-\widetilde{f}_2)(p)\neq 0$, a contradiction. 
Consequently, $\widetilde{f}_1(q)=\widetilde{f}_2(q)$ at some interior 
point $q\in \bH^n \setminus \overline{\Omega}$. Application of 
Proposition \ref{PropIntMaxPrinciple} completes the proof.
\end{proof}

We recall that $\rho \definedas \sinh(r)$. The hyperbolic metric $b$
takes the form 
\[
b = \frac{(d\rho)^2}{1+\rho^2} + \rho^2 \sigma,
\]
and the function $V = \cosh(r) = \sqrt{1+\rho^2}$.

\begin{proposition}
The second fundamental form of the graph given by \eqref{eqHeightAdSSchw}
is given by
\[
\sffb
= 
-\frac{n-2}{2} \frac{\sqrt{2m} \rho^{-\frac{n}{2}}}
{1+\rho^2 - \frac{2m}{\rho^{n-2}}} d\rho^2
+ \sqrt{2m} \rho^{-\frac{n}{2}+2} \sigma.
\]
In particular, the principal curvatures of the graph $\Sigma$ are
$-\frac{n-2}{2} \sqrt{2m} \rho^{-\frac{n}{2}}$ with multiplicity $1$ and
$\sqrt{2m} \rho^{-\frac{n}{2}}$ with multiplicity $n-1$. The mean curvature
$\Hbar$ is given by
\[
\Hbar = \frac{n}{2} \sqrt{2m} \rho^{-\frac{n}{2}}.
\]
In particular, the quadratic form
\[
\Hbar g - \sffb 
= 
(n-1) \frac{\sqrt{2m} \rho^{-\frac{n}{2}}}{1+\rho^2 - \frac{2m}{\rho^{n-2}}} 
d\rho^2 + \frac{n-2}{2} \sqrt{2m} \rho^{-\frac{n}{2}+2} \sigma
\]
is positive definite.
\end{proposition}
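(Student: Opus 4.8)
The plan is to exploit the rotational symmetry of the graph. Since $f=f(\rho)$, the hypersurface $\Sigma_{\text{\rm AdS-Schw}}$ is rotationally invariant, so both $g$ and $\sffb$ are diagonal in the coordinates $(\rho,\omega)$ with $\omega\in S^{n-1}$, and it suffices to compute one radial and one spherical component. Throughout set $W\definedas 1+\rho^2-\tfrac{2m}{\rho^{n-2}}$, so that $W(\rho_0)=0$ and $W>0$ for $\rho>\rho_0$ (since $W'=2\rho+2m(n-2)\rho^{-(n-1)}>0$).

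First I would record the elementary quantities. From $b=\tfrac{d\rho^2}{1+\rho^2}+\rho^2\sigma$ one gets $\Gamma^\rho_{\rho\rho}=-\tfrac{\rho}{1+\rho^2}$ and $\Gamma^\rho_{ab}=-\rho(1+\rho^2)\sigma_{ab}$ for spherical indices $a,b$, hence
\[
(\hess^b f)_{\rho\rho}=f''+\tfrac{\rho}{1+\rho^2}f',\qquad (\hess^b f)_{\rho a}=0,\qquad (\hess^b f)_{ab}=\rho(1+\rho^2)f'\,\sigma_{ab}.
\]
From $V=\sqrt{1+\rho^2}$ we have $dV=\tfrac{\rho}{\sqrt{1+\rho^2}}d\rho$, and the defining relation $V^2(f')^2=\tfrac1W-\tfrac1{1+\rho^2}$ gives $(f')^2=\tfrac{2m\rho^{-(n-2)}}{W(1+\rho^2)^2}$, whence $V^2|df|_b^2=\tfrac{2m\rho^{-(n-2)}}{W}$ and $1+V^2|df|_b^2=\tfrac{1+\rho^2}{W}$, so the recurring prefactor collapses to $\tfrac{V}{\sqrt{1+V^2|df|^2}}=\sqrt W$.

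I would then substitute into the formula for $\sffbdd{i}{j}$ from Section~\ref{subsection_comp_scal}. The spherical block is immediate since $\nabla_a f=\nabla_a V=0$: using $f'=\sqrt{2m}\,\rho^{-(n-2)/2}/(\sqrt W(1+\rho^2))$ one gets $\sffbdd{a}{b}=\sqrt W\,(\hess^b f)_{ab}=\sqrt W\,\rho(1+\rho^2)f'\,\sigma_{ab}=\sqrt{2m}\,\rho^{-n/2+2}\sigma_{ab}$, and raising an index with $g^{ab}=\rho^{-2}\sigma^{ab}$ shows that $\kappa\definedas\sqrt{2m}\,\rho^{-n/2}$ is a principal curvature of multiplicity $n-1$. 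For the radial entry I would plug $(\hess^b f)_{\rho\rho}$ into the same formula and express $f''$ by logarithmic differentiation of the relation $(f')^2=2m\rho^{-(n-2)}/(W(1+\rho^2)^2)$, using $W'=2\rho+2m(n-2)\rho^{-(n-1)}$ and the identity $2m\rho^{-(n-2)}=(1+\rho^2)-W$; after simplification the radial terms collapse to $\sffbdd{\rho}{\rho}=-\tfrac{n-2}{2}\,\sqrt{2m}\,\rho^{-n/2}/W$, i.e. the remaining principal curvature is $\lambda\definedas-\tfrac{n-2}{2}\sqrt{2m}\,\rho^{-n/2}$ (multiplicity $1$). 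A cleaner way to obtain $\lambda$, which I would use as a cross-check, is to note that the induced metric equals $g_{\text{\rm AdS-Schw}}$ of \eqref{eqAdSSchSpatialArea}, which has $\scal=-n(n-1)$; then the Gauss equation \eqref{eqScalRen} gives $\Hbar^2=|\sffb|_g^2$, i.e. $(\lambda+(n-1)\kappa)^2=\lambda^2+(n-1)\kappa^2$, and since $\kappa\neq0$ this forces $\lambda=-\tfrac{n-2}{2}\kappa$ without any need to compute $f''$.

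Once $\lambda$ and $\kappa$ are known, the rest is bookkeeping. The mean curvature is $\Hbar=\lambda+(n-1)\kappa=\bigl(-\tfrac{n-2}{2}+(n-1)\bigr)\sqrt{2m}\,\rho^{-n/2}=\tfrac{n}{2}\sqrt{2m}\,\rho^{-n/2}$, and since $\sffbdd{\rho}{\rho}=\lambda g_{\rho\rho}$ and $\sffbdd{a}{b}=\kappa g_{ab}$ with $g_{\rho\rho}=\tfrac1W$, $g_{ab}=\rho^2\sigma_{ab}$,
\[
\Hbar g-\sffb=\frac{\Hbar-\lambda}{W}\,d\rho^2+(\Hbar-\kappa)\,\rho^2\sigma=(n-1)\frac{\sqrt{2m}\,\rho^{-n/2}}{W}\,d\rho^2+\frac{n-2}{2}\sqrt{2m}\,\rho^{-n/2+2}\,\sigma,
\]
which is positive definite because $m>0$, the condition $\rho>\rho_0$ forces $W>0$, and $n\geq3$ makes both coefficients strictly positive. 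The only genuinely computational step is the radial entry of $\sffb$; everything else is immediate or formal, and the Gauss-equation shortcut removes even that obstacle, at the modest cost of invoking $\scal_{g_{\text{\rm AdS-Schw}}}=-n(n-1)$.
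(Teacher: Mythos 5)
Your computation is correct and is exactly the "straightforward calculation" the paper leaves to the reader: the prefactor $V/\sqrt{1+V^2|df|^2}=\sqrt{W}$, the spherical block, the radial entry, and the resulting principal curvatures, mean curvature, and positive definiteness of $\Hbar g-\sffb$ all check out. The Gauss-equation shortcut for the radial principal curvature (using $\scal_{g_{\text{\rm AdS-Schw}}}=-n(n-1)$ together with \eqref{eqScalRen} to get $\lambda=-\tfrac{n-2}{2}\kappa$ without computing $f''$) is a clean and legitimate way to avoid the only tedious step.
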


\begin{proof}
Straightforward calculations.
\end{proof}

We are now ready to prove the result on rigidity for the case of equality 
in the last inequality of Theorem \ref{THM-Main}. From Theorem 
\ref{THM-AH-Penrose-Graph} we know that in this case $\scal = -n(n-1)$ 
and $\partial \Omega \subset \bH^n$ is a round sphere centered at the 
origin. The result thus follows from the next theorem.

\begin{thm} \label{thm_rigidity}
Let $f : \bH^n \setminus \Omega \to \bR$ be an asymptotically hyperbolic 
function which satisfies the assumptions of Theorem \ref{THM-Main}
and such that the graph of $f$ has constant scalar curvature $\scal = -n(n-1)$. 
Also assume that $\partial \Omega$ is a round sphere centered 
at the origin and that $df(\eta)(x) \to \infty$ as $x \to \partial \Omega$ 
where $\eta$ is the outward normal of the level sets of $f$. 
Then the graph of $f$ is isometric to the $t=0$ slice of the anti-de 
Sitter Schwarzschild space-time, as described in Section \ref{secAdS}.
\end{thm}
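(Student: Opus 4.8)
The plan is to identify the graph $\Sigma$ of $f$ with the anti-de Sitter Schwarzschild graph of the right mass by a sliding argument, feeding the outcome into the comparison principles of Propositions~\ref{PropIntMaxPrinciple} and~\ref{PropBoundMaxPrinciple}. By the equality discussion preceding the statement, $\Omega=B_{r_0}$ is the geodesic ball of radius $r_0$ about the origin, and the mass of $\Sigma$ equals that of the anti-de Sitter Schwarzschild space-time whose horizon has radius $\rho_0=\sinh r_0$; call this mass $m_0$ and let $u$ denote the associated height function \eqref{eqHeightAdSSchw}, defined on $\bH^n\setminus B_{r_0}=\bH^n\setminus\Omega$. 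Then $u$ is radial and increasing in $\rho$, vanishes on $\partial\Omega$, has $|du|\to\infty$ at $\partial\Omega$, satisfies $0\le u\le L_u$ for a constant $L_u$, and its graph has $\scal=-n(n-1)$, $\Hbar>0$, and $\Hbar g^{ij}-\sffb^{ij}$ positive definite, by the computation carried out just above; in particular $u$, like $f$, meets the hypotheses of Theorem~\ref{THM-Main}. I first normalize $f$. Since $\Omega=B_{r_0}$ is geodesically convex, Proposition~\ref{propMeanCurv} shows that $\Hbar$ does not change sign on $\Sigma$; the hypothesis $df(\eta)(x)\to+\infty$ at $\partial\Omega$, together with the formula for $\Hbar$, gives $\Hbar\ge0$ near $\partial\Omega$ and hence $\Hbar\ge0$ on all of $\Sigma$. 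Subtracting a constant we may assume $f=0$ on $\partial\Omega$, and then $f>0$ in a punctured neighborhood of $\partial\Omega$ by the same hypothesis. Since $\scal\ge-n(n-1)$ is equivalent to $|\sffb|^2\le\Hbar^2$, the equation $\Hbar(Df,D^2f)\ge0$ is quasilinear elliptic with no zeroth-order term, so by the strong maximum principle $f$ has no interior local minimum; as $f$ is bounded and tends to a constant $L_f$ at infinity, this forces $L_f\ge0$ and thus $f\ge0$ on $\bH^n\setminus\overline\Omega$.

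Next I slide $u$ vertically. Since $f$ and $u$ are bounded, the numbers
\[
c^+\definedas\inf\{\,c\in\bR : u+c\ge f\ \text{on}\ \bH^n\setminus\overline\Omega\,\},\qquad
c^-\definedas\sup\{\,c\in\bR : u+c\le f\ \text{on}\ \bH^n\setminus\overline\Omega\,\}
\]
are finite, and letting $x\to\partial\Omega$, where $u=f=0$, gives $c^-\le0\le c^+$. The bounded continuous function $f-u$ has supremum $c^+$ and infimum $c^-$ over $\bH^n\setminus\overline\Omega$ and tends to $L_f-L_u$ at infinity. If neither of these extrema were attained at a point of $\bH^n\setminus B_{r_0}$ we would have both $f-u<L_f-L_u$ and $f-u>L_f-L_u$ everywhere, which is absurd; hence one of them is attained at some point $q$. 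Suppose $c^+$ is attained at $q$ (the case of $c^-$ is symmetric, with the roles of the two functions interchanged). If $q$ is interior, apply Proposition~\ref{PropIntMaxPrinciple} with $f_1=u+c^+\ge f_2=f$: all hypotheses hold, because $\scal(Df_1,D^2f_1)=-n(n-1)$, $\Hbar(Df_1,D^2f_1)>0$, $\scal(Df_2,D^2f_2)\ge-n(n-1)$, $\Hbar(Df_2,D^2f_2)\ge0$, the matrix $\Hbar g^{ij}-\sffb^{ij}$ is positive definite for $f_1$, and $f_1=f_2$ at $q$. This yields $f\equiv u+c^+$, and restricting to $\partial\Omega$ gives $c^+=0$, so $f=u$. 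If instead $q\in\partial\Omega$, then $c^+=(f-u)(q)=0$, so $u\ge f\ge0$ on $\bH^n\setminus\overline\Omega$ with $u=f=0$ on $\partial\Omega$, and Proposition~\ref{PropBoundMaxPrinciple} (with $C=0$) again gives $f\equiv u$. In every case $f$ coincides with the anti-de Sitter Schwarzschild height function of mass $m_0$, so the graph of $f$ carries the metric \eqref{eqAdSSchSpatialArea}, that is, it is isometric to the $t=0$ slice of the anti-de Sitter Schwarzschild space-time.

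I expect the delicate step to be the one excluding a comparison that ``touches only at infinity'': above this is handled by sliding in both directions and observing that $\sup(f-u)$ and $\inf(f-u)$ cannot simultaneously be attained only in the limit, but one could instead argue from the asymptotic expansions of $f$ and $u$, which share the mass $m_0$. A second technical point is that the scalar curvature equation degenerates at the horizon, where $|df|,|du|\to\infty$; this is precisely the difficulty that Proposition~\ref{PropBoundMaxPrinciple} is built to absorb, via reflection of the graphs across the horizon into a $C^1$ problem on a domain without boundary.
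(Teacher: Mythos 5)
Your proof follows the same overall strategy as the paper's: compare $f$ with the anti-de Sitter Schwarzschild height function $u$ over the same horizon by vertical translation, and invoke Propositions~\ref{PropIntMaxPrinciple} and~\ref{PropBoundMaxPrinciple} at a touching point. The one genuinely different step is how you exclude a comparison that touches only at infinity: the paper splits into the cases $C\leq C_0$ and $C>C_0$ and, when the first contact happens at infinity, uses the fact that both graphs solve $\scal=-n(n-1)$ exactly to put $u_\lambda-f$ into a linear elliptic equation and relocate the extremum to a finite point; you instead observe that $\sup(f-u)$ and $\inf(f-u)$ cannot both fail to be attained (each failure forces the corresponding strict inequality against $L_f-L_u$, in opposite directions), so at least one slide produces a finite contact point. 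This is a clean simplification, and it works precisely because both $f$ and $u$ satisfy the exact equation, so either one may play the role of $f_1$ in Proposition~\ref{PropIntMaxPrinciple}.

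Two local justifications are, however, wrong or missing. First, you assert that $\Hbar\geq 0$ near $\partial\Omega$ follows from ``the formula for $\Hbar$'' together with $df(\eta)\to+\infty$; the formula alone does not determine the sign there, since the second-order terms blow up at the horizon. The paper's mechanism is Proposition~\ref{propLevelSet} combined with the positivity of the mean curvature of the level sets of $f$ near the round sphere $\partial\Omega$, and you should route the argument through that. Second, your derivation of $f\geq 0$ is backwards: $\Hbar\geq 0$ reads $a^{ij}f_{ij}+b^if_i\geq 0$ with no zeroth-order term, i.e.\ $f$ is a \emph{subsolution}, and the strong maximum principle then excludes interior local \emph{maxima}, not minima (in one dimension a convex function vanishing at the endpoints is nonpositive in between, so a subsolution can certainly dip below its boundary values). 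This matters only in the sub-case where $\sup(f-u)$ is attained on $\partial\Omega$ and Proposition~\ref{PropBoundMaxPrinciple} is applied with $f_2=f$ and $C=0$, where the hypothesis $f_2\geq C$ is required; what saves the argument is that the proof of that proposition only uses this inequality locally near the contact point on $\partial\Omega$, and $f\geq 0$ near $\partial\Omega$ is immediate from $f=0$ there and $df(\eta)\to+\infty$. (The paper glosses over the same point.) Finally, the opening claim that the mass of $\Sigma$ equals $m_0$ is not a hypothesis of the theorem and is not used anywhere in your argument; the comparison function $u$ is determined by the horizon alone, so this assertion should simply be dropped.
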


\begin{proof}
By adding a constant to $f$ we assume that $f=0$ on $\partial \Omega$.
From Proposition \ref{propMeanCurv} we know that $\Hbar$ does not change
sign. Proposition \ref{propLevelSet} together with the fact that $H$ is
positive on $\partial \Omega$ tells us that $\Hbar \geq 0$ on the boundary, 
and thus $\Hbar \geq 0$ everywhere. The maximum principle applied to 
$\Hbar$ together with $df(\eta) \to +\infty$ at $\partial \Omega$ tells
us that $\limsup_{x \to \infty} f(x) > 0$. Since $f$ is an asymptotically 
hyperbolic function we conclude that $\lim_{x \to \infty} f(x) = C$ where 
$0 < C < \infty$.

Let $f_{\text{\rm AdS-Schw}}$ be the asymptotically hyperbolic function 
whose graph is isometric to the $t=0$ slice of anti-de Sitter Schwarzschild 
space-time, with mass parameter $m$ such that its horizon is exactly 
the sphere $\partial \Omega$. This function vanishes on $\partial \Omega$ 
and has $\lim_{x \to \infty} f_{\text{\rm AdS-Schw}} = C_0$ where $0 < C_0 < \infty$.
 
If $C \leq C_0$ we set $u_{\lambda} = f_{\text{\rm AdS-Schw}} + \lambda$ for 
$\lambda \geq 0$. If $\lambda$ is large enough then $u_{\lambda} > f$. We 
decrease $\lambda$ until finally $u_{\lambda}(p)=f(p)$ at a point $p$,
possibly $p=\infty$. If $p$ is an interior point then Proposition
\ref{PropIntMaxPrinciple} tells us that $u_{\lambda} \equiv f$, if $p$ is a
boundary point then Proposition \ref{PropBoundMaxPrinciple} tells us that
$u_{\lambda} \equiv f$. There is however one more situation to consider,
namely when $u_\lambda>f$ and $\lim_{x\rightarrow\infty} (u_\lambda-f)=0$.
Since both the graph of $u_\lambda$ and the graph of $f$ have
$\scal=-n(n-1)$, arguing as in the proof of Proposition
\ref{PropIntMaxPrinciple} we conclude that $u_\lambda-f$ satisfies the
equation
\[
\sum_{i,j} a^{ij} (u_\lambda-f)_{ij} + \sum_i b^i (u_\lambda-f)_i=0.
\]
In this case, the Hopf strong maximum principle tells us that $u_\lambda-f$
attains its positive maximum  either at an interior point or at a point of
$\partial \Omega$. Let us denote this point by $q$ and suppose that
$(u_\lambda-f)(q)=\beta>0$. Clearly, $f \geq u_\lambda-\beta$, and
$f(q)=(u_\lambda-\beta)(q)$. By either Proposition \ref{PropIntMaxPrinciple}
or Proposition \ref{PropBoundMaxPrinciple} we conclude that
$u_\lambda-\beta\equiv f$. 

If $C > C_0$ we set $v_{\lambda} = f_{\text{\rm AdS-Schw}} - \lambda$ for 
$\lambda \geq 0$. For $\lambda$ large enough we have $v_{\lambda} < f$ and
we decrease $\lambda$ until $v_{\lambda}$ hits $f$. Arguing as above it is easy to show that $v_{\lambda} \equiv f$. 

In any case we have found that $f$ and $f_{\text{\rm AdS-Schw}}$ differ by 
a constant, which is the conclusion of the theorem.
\end{proof}

\appendix
\section{A property of unbounded open subsets of $\bR^n$} 
\label{appopensets}

In this appendix we will prove the following result on the boundary 
components of an unbounded open subset of $\bR^n$.

\begin{proposition} \label{prop_unbounded_boundary}
Let $H: \bR^n \to \bR$, $n \geq 2$, be a continuous function which takes 
both positive and negative values. Assume that each connected component 
of $H^{-1}( (0,\infty) )$ and $H^{-1}( (-\infty,0) )$ is unbounded. Then 
there is a connected component of $H^{-1}(0)$ which is unbounded.
\end{proposition}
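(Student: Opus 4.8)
The plan is to pass to the one‑point compactification and invoke a classical separation theorem. Write $U^{+}=H^{-1}((0,\infty))$, $U^{-}=H^{-1}((-\infty,0))$ and $Z=H^{-1}(0)$, so that $\bR^{n}=U^{+}\sqcup U^{-}\sqcup Z$ with $U^{\pm}$ open and $Z$ closed; since $H$ changes sign and $\bR^{n}$ is connected, $Z\neq\emptyset$. I would then work in $X=\bR^{n}\cup\{\infty\}$, which is homeomorphic to $\bS^{n}$, and set $\widehat Z=Z\cup\{\infty\}$. Since $X\setminus\widehat Z=U^{+}\sqcup U^{-}$, the set $\widehat Z$ is closed, hence compact, in $X$; and the hypothesis that every component of $U^{\pm}$ is unbounded means precisely that every component of $U^{\pm}$ has $\infty$ in its closure in $X$.

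Pick $a\in U^{+}$ and $b\in U^{-}$. Their components $C^{+}\ni a$ in $U^{+}$ and $C^{-}\ni b$ in $U^{-}$ are unbounded, and being connected subsets of $X\setminus\widehat Z=U^{+}\sqcup U^{-}$ lying in the two disjoint open pieces, $a$ and $b$ lie in different connected components of $X\setminus\widehat Z$. The key input is the classical separation theorem for spheres (valid for $n\geq2$): if a compact subset of $\bS^{n}$ separates two points, then one of its connected components already separates them — see e.g.\ Wilder's \emph{Topology of Manifolds}, or deduce it from the continuity of \v{C}ech cohomology together with Alexander duality. Applied to $\widehat Z$ this gives a connected component $Q$ of $\widehat Z$ separating $a$ from $b$ in $X$. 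Note $Q\neq\{\infty\}$, because $X\setminus\{\infty\}=\bR^{n}$ is connected and so $\{\infty\}$ separates nothing; thus $Q\cap Z\neq\emptyset$.

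I would finish by two cases. If $\infty\notin Q$, then $Q\subseteq Z$ and $Q$ is a connected component of $Z$ in $\bR^{n}$; if $Q$ were bounded it would be compact, $\bR^{n}\setminus Q$ would have a single unbounded component (as $\bR^{n}\setminus\overline{B}_{R}$ is connected for large $R$), and the unbounded connected sets $C^{+},C^{-}\subseteq\bR^{n}\setminus Z\subseteq\bR^{n}\setminus Q$ would both lie in it, so $Q$ would not separate $a$ from $b$ — a contradiction. Hence $Q$ is an unbounded component of $H^{-1}(0)$. If $\infty\in Q$, choose $z\in Q\cap Z$ and let $K$ be the component of $z$ in $Z$; if $K$ were bounded, hence a compact component of the closed set $Z$, then (components and quasicomponents agree on a suitable compact neighbourhood of $K$ in $Z$) a finite intersection of relatively clopen subsets yields a compact set $L$, clopen in $Z$, with $K\subseteq L$, and thickening $L$ by less than $\dist(L,Z\setminus L)$ gives a bounded open $O\supseteq L$ with $\partial O\cap Z=\emptyset$. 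One checks that $O\cap Z$ is then clopen not only in $Z$ but in $\widehat Z$, so $Q\cap(O\cap Z)$ is a nonempty proper relatively clopen subset of $Q$ (it contains $z$ but not $\infty$), contradicting connectedness of $Q$. Hence $K$ is an unbounded component of $H^{-1}(0)$.

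The main obstacles are exactly these two imported facts: the separation theorem for $\bS^{n}$ — this is what forces $Z$ to carry the "wall" separating the two unbounded sets $U^{\pm}$, and it is where $n\geq2$ is used, the statement being false on $\bS^{1}$ — and the point‑set lemma that a compact connected component of a closed subset of $\bR^{n}$ admits arbitrarily small open neighbourhoods whose boundary misses the set. Everything else is routine bookkeeping in the compactification; in particular one never needs the intermediate fact that $Z$ is unbounded, since the argument produces an unbounded component of $Z$ directly.
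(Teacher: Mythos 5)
Your proof is correct, but it takes a genuinely different route from the paper's. The paper never invokes a separation theorem on the sphere: it fixes a component $V$ of $H^{-1}((0,\infty))$, passes to a compactified picture in which $\overline{V}$ becomes a compact connected set $K$, proves via a Mayer--Vietoris computation in $H_0$ that the $\epsilon$-collar of $K$ inside the unbounded complementary component is connected, and then uses a nested-intersection argument to conclude that the ``outer boundary'' $\partial^{\infty}V$ together with the point at infinity is connected; hence every component of $\partial^{\infty}V$ is unbounded, and each lies inside a component of $H^{-1}(0)$. You instead push everything into $\widehat Z = Z\cup\{\infty\}\subset\bS^n$ and import the classical theorem that a compact set separating two points of $\bS^n$, $n\geq 2$, contains a connected separator, then handle the component through $\infty$ by the \v{S}ura-Bura-type lemma that a compact component of a closed subset of $\bR^n$ sits inside a compact relatively clopen subset. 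Both of your case analyses are sound: in the first case the uniqueness of the unbounded component of the complement of a compact set does the work, and in the second the set $L$ is indeed clopen in $\widehat Z$ and not merely in $Z$, which yields the contradiction with the connectedness of $Q$. What the paper's route buys is self-containedness --- only $H_0$, $H_1$ of $\bR^n$ and elementary point-set topology are needed --- together with an explicit location of the unbounded components of $H^{-1}(0)$ as those meeting the outer boundary of a component of $H^{-1}((0,\infty))$. What your route buys is a shorter main argument, at the cost of outsourcing the real content (and the only use of $n\geq 2$) to the separation theorem, whose standard proofs via \v{C}ech cohomology and Alexander duality, or via unicoherence, are themselves nontrivial.
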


To prove the proposition we use the following lemma.

\begin{lemma}
Let $K \subset \bR^n$, $n \geq 2$, be compact and connected. Let $U$ be 
the unbounded connected component of $\bR^n \setminus K$. Then 
$U_{\epsilon} \definedas \{ x \in U \mid d(x,K) < \epsilon \}$ is connected. 
\end{lemma}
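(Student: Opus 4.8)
The statement is a basic fact in point-set topology: if $K$ is compact connected in $\bR^n$, $U$ is the unbounded component of $\bR^n\setminus K$, and $U_\epsilon=\{x\in U\mid d(x,K)<\epsilon\}$, then $U_\epsilon$ is connected. The plan is to reduce connectedness of $U_\epsilon$ to path-connectedness (both notions of connected are open so this is harmless here), and to show that any point of $U_\epsilon$ can be joined inside $U_\epsilon$ to a fixed reference point by first ``sliding along'' $K$.

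First I would fix a point $x\in U_\epsilon$. Since $d(x,K)<\epsilon$, choose $y\in K$ with $|x-y|<\epsilon$; the open segment from $x$ towards $y$ stays in $U_\epsilon$ for as long as it stays in $U$, and since $U$ is open and $x\in U$, a short initial sub-segment $[x,x']$ with $x'$ very close to $y$ lies in $U\cap U_\epsilon$; thus $x$ is joined in $U_\epsilon$ to points arbitrarily close to $y\in K$. So it suffices to show: for any two points $y_0,y_1\in K$ and points $x_0,x_1\in U_\epsilon$ with $|x_i-y_i|<\epsilon$, one can join $x_0$ to $x_1$ within $U_\epsilon$. Because $K$ is connected and compact, it is in particular contained in one ``chunk'' — the key geometric idea is that the open $\epsilon$-neighborhood $N_\epsilon(K)=\{x\mid d(x,K)<\epsilon\}$ is itself connected (it is a union of open balls $B_\epsilon(y)$, $y\in K$, and connectedness of $K$ makes this union connected: the ``nerve'' argument, or simply that $y\mapsto B_\epsilon(y)$ gives overlapping balls along any path in $K$ since $K$, being compact and connected, is contained in a single such connected union). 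So $x_0$ and $x_1$ can be joined by a path $\gamma$ inside $N_\epsilon(K)$.

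The remaining issue — and this is the main obstacle — is that $\gamma$ might wander into bounded components of $\bR^n\setminus K$, i.e. $N_\epsilon(K)$ is not contained in $U$. To fix this I would push $\gamma$ outward. Concretely: $N_\epsilon(K)=\bigcup_{y\in K}B_\epsilon(y)$, and for each $y\in K$ the ball $B_\epsilon(y)$ meets $U$ (points on the segment from $y$ radially outward past distance $0$ but within $\epsilon$, in the direction away from $K$, belong to $U$ for small enough displacement — more carefully, every neighborhood of a boundary point of $U$ meets $U$, and $y\in K\subset\overline{U}$ is not automatic, so instead use: the unbounded component $U$ has $K\subset\partial U\cup(\text{other components})$...). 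Rather than chase this, the cleanest route is: $N_\epsilon(K)$ is connected and open, hence path-connected; the set $U_\epsilon=N_\epsilon(K)\cap U$ is open; and one shows $N_\epsilon(K)\setminus U_\epsilon = N_\epsilon(K)\cap(\text{bounded components of }\bR^n\setminus K)$ is ``surrounded'' — but bounded components of $\bR^n\setminus K$ for $K$ connected are themselves simply related to $K$. I expect the honest argument is: given a path $\gamma$ in $N_\epsilon(K)$ from $x_0$ to $x_1$, whenever $\gamma$ enters a bounded complementary component $V$ of $\bR^n\setminus K$, it does so through $\partial V\subset K$, so at the entry and exit times $\gamma$ is within $\epsilon$ of $K$; replace that excursion by a path that instead travels within $N_\epsilon(K)\cap U$ along the ``outside'' of the relevant portion of $K$. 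This local surgery, done finitely many times by compactness of $\gamma([0,1])$ and an argument that only finitely many complementary components are relevant (those of diameter or distance-to-$\gamma$ bounded below), yields a path in $U_\epsilon$. The main difficulty is precisely making this surgery rigorous — controlling which bounded components are entered and showing the rerouting stays within distance $\epsilon$ of $K$ while landing in $U$ — and I would expect the authors to invoke a lemma on the structure of $\bR^n\setminus K$ for compact connected $K$ (such results were credited to Chalons and Tu in the acknowledgements) rather than argue from scratch.
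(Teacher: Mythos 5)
There is a genuine gap, and you in fact name it yourself: the ``surgery'' step in which an excursion of the path $\gamma$ into a bounded component $V$ of $\bR^n\setminus K$ is replaced by a detour ``along the outside of the relevant portion of $K$'' inside $N_\epsilon(K)\cap U=U_\epsilon$. Whether the entry point and the exit point of such an excursion can be joined \emph{within $U_\epsilon$} is precisely (a local instance of) the statement being proved, so as written the argument is circular; nothing you have established guarantees that such an outside detour exists, and for a wild compact connected $K$ (whose complement may have infinitely many bounded components with boundaries that are only fragments of $K$) there is no obvious elementary construction of it. A secondary, fixable slip: for an arbitrary $y\in K$ with $|x-y|<\epsilon$ the segment $[x,y]$ may leave $U$ long before reaching a neighborhood of $y$; you should take $y$ to be a \emph{nearest} point of $K$ to $x$, in which case $[x,y)$ avoids $K$, stays in the component $U$ of $x$, and satisfies $d(\cdot,K)\le|\cdot-y|<\epsilon$, so it lies in $U_\epsilon$. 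But this only repairs the easy reduction, not the main step.

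The paper sidesteps the surgery entirely with a short algebraic-topology argument, which is worth comparing. Set $F\definedas\bR^n\setminus U$, i.e.\ $K$ together with all bounded complementary components; a ray argument (the segment from any $x\in F\setminus K$ to the first point of $K$ it meets stays in the bounded component of $x$, hence in $F$) shows $F$ is compact and connected. Then $F_\epsilon\definedas\{x\mid d(x,F)<\epsilon\}=\bigcup_{p\in F}B_\epsilon(p)$ is connected, and one checks $F_\epsilon=U_\epsilon\cup F$ while $U\cap F_\epsilon=U_\epsilon$. Mayer--Vietoris for the open cover $\bR^n=U\cup F_\epsilon$ gives
\[
0=H_1(\bR^n)\to H_0(U_\epsilon)\to H_0(U)\oplus H_0(F_\epsilon)\to H_0(\bR^n)\to 0,
\]
and since $U$, $F_\epsilon$, $\bR^n$ are all connected this forces $H_0(U_\epsilon)\cong\bZ$, i.e.\ $U_\epsilon$ is connected. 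If you want to keep a path-surgery proof you would need to supply an independent lemma doing the work of this exact sequence (essentially a duality statement about how bounded complementary components are ``screened off'' from $U$ by $K$); without it the proposal does not close.
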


\begin{proof}
Let $F \definedas \bR^n \setminus U$. This set is closed and bounded and 
therefore compact. We show that $F$ is connected. Let $f: F \to \{0,1\}$ 
be continuous. Then $f$ is constant on $K$. Take $x \in F \setminus K$. 
For $0 \neq a \in \bR^n$ consider the half-line $\{ x + ta \mid 0 \leq t \}$. 
Let $t_0$ be the smallest number so that $x + t_0 a \in K$. Then the line 
segment $\{ x + ta \mid 0 \leq t \leq t_0 \}$ is a subset of $F$, and we 
conclude that $f$ must be constant on $F$ so $F$ is connected. Next 
define $F_{\epsilon} \definedas \{ x \in \bR^n \mid d(x,F) < \epsilon \}$. 
Since $F_{\epsilon} = \cup_{p \in F} B_{\epsilon}(p)$ this is a connected set. 
Note that $F_{\epsilon} = U_{\epsilon} \cup F$. The Mayer-Vietoris sequence 
for homology tells us that
\[
\dots \to H_1 (\bR^n) \to H_0 (U_{\epsilon}) 
\to H_0 (U) \oplus H_0 (F_{\epsilon})
\to H_0 (\bR^n) \to 0,
\]
from which we conclude that $U_{\epsilon}$ is connected.
\end{proof}

\begin{proof}[Proof of Proposition \ref{prop_unbounded_boundary}]
Let $V$ be a connected component of $H^{-1}((0,\infty))$. Let 
$V' \subset \bR^n$ be the image of $V$ when compactifying $\bR^n$ with 
a point at infinity and then removing a point $p$ lying in an unbounded 
component of $\bR^n \setminus V$. The set $V'$ is open, bounded and 
connected, so the closure $K \definedas \overline{V'}$ is compact and 
connected. Let $\partial^{\infty} K$ be the part of the boundary of $K$ 
facing the unbounded component of $\bR^n \setminus K$. Since the 
intersection of a nested sequence of compact connected sets is connected 
we conclude from the Lemma that $\partial^{\infty} K$ is connected. Going 
back to $V$ this means that the union $\partial^{\infty} V \cup \{ \infty \}$ 
is connected, where $\partial^{\infty} V$ is the part of the boundary 
facing the component of $\bR^n \setminus V$ containing $p$. From this we 
see that all components of $\partial^{\infty} V$ must be unbounded, since 
if there was a bounded component this would remain disconnected from the 
others when adding the point at infinity. Finally, every component of 
$\partial^{\infty} V$ is contained in some connected component of 
$H^{-1}(0)$, and those components of $H^{-1}(0)$ are therefore unbounded.
\end{proof}

\providecommand{\bysame}{\leavevmode\hbox to3em{\hrulefill}\thinspace}
\providecommand{\MR}{\relax\ifhmode\unskip\space\fi MR }
\providecommand{\MRhref}[2]{%
  \href{http://www.ams.org/mathscinet-getitem?mr=#1}{#2}
}
\providecommand{\href}[2]{#2}


\end{document}